\documentclass[a4paper]{amsart}
\usepackage{amsmath}
\usepackage{amsthm}
\usepackage{amssymb}
\usepackage{enumerate}
\usepackage{latexsym}
\usepackage{empheq}
\usepackage{graphicx}
\usepackage{tikz}
\newtheorem{Thm}{Theorem}[section]
\newtheorem{Prop}[Thm]{Proposition}
\newtheorem{Cor}[Thm]{Corollary}
\newtheorem{Lem}[Thm]{Lemma}

\newtheorem{Cha}{Property A}[section]
\newtheorem{Ch}{Property B}[section]
\newtheorem{A}{Theorem A}[section]
\newtheorem{B}{Theorem B}[section]
\newtheorem{C}{Theorem C}[section]
\newtheorem{formula}{Formulas}[section]

\theoremstyle{definition}
\newtheorem{notation}{Notation}
\newtheorem{Def}{Definition}
\theoremstyle{remark}
\newtheorem{Rem}{Remark}[section]
\newtheorem{Example}{Example}[section]
\newcommand{\Span}{\mathop{\mathrm{Span}}\nolimits}
\newcommand{\Ric}{\mathop{\mathrm{Ric}}\nolimits}
\newcommand{\Spec}{\mathop{\mathrm{Spec}}\nolimits}
\newcommand{\supp}{\mathop{\mathrm{supp}}\nolimits}
\newcommand{\tr}{\mathop{\mathrm{tr}}\nolimits}
\newcommand{\Met}{\mathop{\mathrm{Met}}\nolimits}
\newcommand{\Le}{\mathop{\mathrm{L}}\nolimits}
\newcommand{\vol}{\mathop{\mathrm{vol}}\nolimits}
\newcommand{\dv}{\mathop{\mathrm{div}}\nolimits}
\newcommand{\Ker}{\mathop{\mathrm{Ker}}\nolimits}
\newcommand{\Vol}{\mathop{\mathrm{Vol}}\nolimits}

\title[Rotational Symmetry of the Standard Sphere]{Riemannian Invariants that Characterize Rotational Symmetries of the Standard Sphere}
\author{Masayuki Aino}
\address{Graduate School of Mathematics, Nagoya University, Chikusa-Ku Nagoya, 464-8602, Japan}
\email{m16100c@math.nagoya-u.ac.jp}
\subjclass[2010]{53C21,53C25}
\begin{document}

\maketitle
\begin{abstract}
Inspired by the Lichnerowicz-Obata theorem for the first eigenvalue of the Laplacian, we define a new family of invariants $\{\Omega_k(g)\}$ for closed Riemannian manifolds.
The value of $\Omega_k(g)$ sharply reflects the  spherical part of the manifold.
Indeed, $\Omega_1(g)$ and $\Omega_2(g)$ characterize the standard sphere.
\end{abstract}
\tableofcontents
\section{Introduction}

In this paper we introduce a new family of Riemannian invariants that characterizes the standard sphere.
Indeed, we define $\Omega_1 \geq \Omega_2\geq \cdots \to 0$ for closed Riemannian manifolds, and show that $\Omega_1$ and $\Omega_2$ characterize the standard sphere.
Before explaining our result in detail, we provide historical backgrounds.

Let $(M,g)$ be a closed Riemannian manifold of dimension $n$ and $\Spec(M,g)=\{0=\lambda_0< \lambda_1 \leq \lambda_2 \leq\cdots \to \infty\}$ the set of eigenvalues of the Laplacian $\Delta=-g^{ij}\nabla_i \nabla_j$ acting on $C^\infty(M)$.
We can construct a complete orthonormal system $\{\psi_i\}$ in $L^2(M)$ of eigenfunctions of $\Delta$:
\begin{equation*}
\Delta \psi_i=\lambda_i \psi_i.
\end{equation*}

Eigenvalues characterize the standard sphere in some cases.
For example, Tanno \cite[Theorem B]{Tan} showed that, for $n\leq 6$, $\Spec(S^n(c))$ completely characterizes the standard $n$-dimensional sphere $S^n(c)$ of radius $c$, i.e., if $\Spec(M,g)=\Spec(S^n(c))$, then $(M,g)$ is isometric to $S^n(c)$.
Under the condition $\Ric \geq r g$ for a positive constant $r>0$, Lichnerowicz \cite{Li} showed
\begin{equation*}
\lambda_1\geq \frac{n}{n-1}r,
\end{equation*}
and Obata showed the equality holds if and only if $(M,g)$ is isometric to the standard sphere of radius $\sqrt{(n-1)/r}$.
In the proof, the following theorem \cite[Theorem A]{Ob} plays an important role.
\begin{Thm}
Let $c$ be a positive constant. If $(M,g)$ admits a non-constant function $u\in C^\infty(M)$ with $\nabla^2 u=-c^2 u g$, then $(M,g)$ is isometric to $S^n(1/c)$.
\end{Thm}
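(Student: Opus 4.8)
The plan is to follow Obata's argument, in the streamlined form that avoids any cut-locus analysis. The starting point is a first integral of the Hessian equation: differentiating $|\nabla u|^{2}+c^{2}u^{2}$ and substituting $\nabla_{i}\nabla_{j}u=-c^{2}ug_{ij}$ gives $\nabla_{i}\bigl(|\nabla u|^{2}+c^{2}u^{2}\bigr)=2(\nabla_{i}\nabla_{j}u)\nabla^{j}u+2c^{2}u\nabla_{i}u=0$, so $|\nabla u|^{2}+c^{2}u^{2}\equiv c^{2}a^{2}$ for a constant $a>0$ (positive since $u$ is non-constant). In particular $|\nabla u|=c\sqrt{a^{2}-u^{2}}$, and, $M$ being closed, $u$ attains $\pm a$; at any point where $u=\pm a$ one has $\nabla u=0$ and, by the equation, $\nabla^{2}u=\mp c^{2}a\,g$, so the level sets $\{u=a\}$ and $\{u=-a\}$ consist of finitely many isolated nondegenerate critical points.

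On the open set $V:=\{-a<u<a\}$ I would introduce $r:=\tfrac1c\arccos(-u/a)\in(0,\pi/c)$; a direct computation gives $\nabla r=\nabla u/|\nabla u|$, hence $|\nabla r|\equiv1$ on $V$. The gradient flow of such a function consists of unit-speed geodesics, and flowing a single level set $\Sigma$ realizes $(V,g)$ isometrically as $\bigl((0,\pi/c)\times\Sigma,\;dr^{2}+g_{r}\bigr)$ for a path of metrics $g_{r}$ on $\Sigma$ ($V$ is connected when $n\geq2$, removing finitely many points from $M$; the case $n=1$ is elementary). Writing $u=h(r)$ with $h(r)=-a\cos(cr)$ and decomposing $\nabla^{2}u=h''(r)\,dr\otimes dr+h'(r)\nabla^{2}r$ against the splitting, the equation $\nabla^{2}u=-c^{2}h(r)(dr\otimes dr+g_{r})$ yields $h''=-c^{2}h$ (automatic) together with $\nabla^{2}r=c\cot(cr)\,g_{r}$ on the level-set block; since there the restriction of $\nabla^{2}r$ is $\tfrac12\partial_{r}g_{r}$, this is the ODE $\partial_{r}g_{r}=2c\cot(cr)\,g_{r}$, whose solution is $g_{r}=c^{-2}\sin^{2}(cr)\,k$ for a fixed metric $k$ on $\Sigma$.

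It remains to cap off the two ends $r\to0$ and $r\to\pi/c$. Fix a minimum point $p$; since $\nabla^{2}u(p)=c^{2}a\,g_{p}$, Taylor expansion in geodesic normal coordinates gives $u=-a+\tfrac{c^{2}a}{2}\,d(p,\cdot)^{2}+O(d^{3})$, hence $r=d(p,\cdot)+O(d^{2})$, and the level sets of $r$ near $p$ are geodesic spheres with induced metric $r^{2}\sigma_{S^{n-1}}+O(r^{4})$. Comparison with $g_{r}=c^{-2}\sin^{2}(cr)\,k=r^{2}k+O(r^{4})$ forces $k$ to be the round metric on $S^{n-1}$; in particular $\Sigma\cong S^{n-1}$, and the warped-product end at $r=0$ collapses to the single point $p$, so $\{u=-a\}=\{p\}$ and, symmetrically, $\{u=a\}=\{q\}$. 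Thus $M=\{p\}\cup V\cup\{q\}$ carries on $V$ the metric $dr^{2}+c^{-2}\sin^{2}(cr)\,\sigma_{S^{n-1}}$, which is exactly $S^{n}(1/c)$ in geodesic polar coordinates about a pole; the isometry $V\cong S^{n}(1/c)\setminus\{\text{poles}\}$ extends smoothly over $p$ and $q$.

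I expect the genuine obstacle to be this capping-off step: one must justify that the warped-product description of $V$ closes up smoothly at the two ends, which is what simultaneously pins down the constant, identifies the cross-section as the actual round sphere, and shows the zero set of $\nabla u$ is two points rather than two copies of some other closed manifold. The classical alternative to the normal-coordinate comparison is the Cauchy--Schwarz equality $\langle\nabla u,\dot\gamma\rangle=|\nabla u|$ along geodesics from $p$, which forces geodesics from $p$ to be reparametrized integral curves of $\nabla u$ and thereby gives the same conclusion; completeness and connectedness of $M$ are used throughout.
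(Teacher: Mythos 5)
The paper itself offers no proof of this statement: it is quoted from Obata \cite{Ob} as background, so there is no internal argument to compare yours against. What you propose is the classical Obata--Tashiro argument, and it is correct in outline: the first integral $|\nabla u|^{2}+c^{2}u^{2}\equiv c^{2}a^{2}$, the unit-gradient function $r=\tfrac1c\arccos(-u/a)$, the warped-product form $dr^{2}+c^{-2}\sin^{2}(cr)\,k$ extracted from the tangential block of the Hessian equation, and a capping-off analysis at the two nondegenerate critical levels. It is also the same circle of ideas the paper does use later for the conformal equation $\nabla^2 u=-\frac{\Delta u}{n}g$: Property A (Tashiro) is exactly such a rotationally symmetric representation $g=dt^{2}+(\psi'(t)/\psi''(0))^{2}g_{n-1}$, and the proof of Lemma \ref{clam} finishes by forcing $\psi'(t)=C\sin(\frac{\pi}{b}t)$, which is the role $\sin(cr)$ plays in your computation; the difference is that in the Obata case the warping function is pinned down for free by the equation, whereas in the conformal case the paper has to earn it (Property B and the second axis).

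Two points need tightening. The identification of $k$ with the round metric is the one step that is not airtight as written: the Taylor expansion only shows that the $r$-levels near $p$ are approximately geodesic spheres, so the comparison ``induced metric $=r^{2}\sigma_{S^{n-1}}+O(r^{4})$'' is not yet available for the actual level sets. The clean repair is precisely the remark you leave as a fallback: along any unit-speed geodesic $\gamma$ from $p$, the function $f=u\circ\gamma$ satisfies $f''=-c^{2}f$, $f(0)=-a$, $f'(0)=0$, hence $u(\exp_p(tv))=-a\cos(ct)$ and $r(\exp_p(tv))=t$ exactly; then $\langle\nabla u,\dot\gamma\rangle=|\nabla u|$ (equality in Cauchy--Schwarz) shows geodesics from $p$ are integral curves of $\nabla r$, the $r$-levels are exactly exponential images of spheres in $T_pM$, and $k=\sigma_{S^{n-1}}$, $\{u=-a\}=\{p\}$, $\{u=a\}=\{q\}$ and the smooth closing-up at the poles all follow at once; this should be the main line of the proof rather than a remark. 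Second, the parenthetical ``the case $n=1$ is elementary'' is wrong in an instructive way: a circle of circumference $2\pi m/c$ with $m\geq 2$ carries $u(x)=\sin(cx)$ satisfying $\nabla^{2}u=-c^{2}ug$, so the statement genuinely requires $n\geq 2$ (this is exactly where connectedness of $V=M\setminus\{u=\pm a\}$ enters), as in Obata's original formulation.
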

\noindent We emphasize that this theorem needs no assumption about Ricci curvature. 
Similarly, Tashiro \cite[Lemma2.2]{T1} showed the following theorem.
\begin{Thm}\label{Tas}
If $(M,g)$ admits a non-constant function $u \in C^\infty(M)$ with $\nabla^2 u=-\frac{\Delta u}{n} g$, then $(M,g)$ is conformal to $S^n(1)$.
\end{Thm}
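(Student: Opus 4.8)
The plan is to recover the classical local description of a concircular function and then read off the conformal structure. Write $\varphi:=-\Delta u/n\in C^\infty(M)$, so that the hypothesis becomes $\nabla^2u=\varphi\,g$; equivalently, the trace-free part of $\nabla^2u$ vanishes. Since $M$ is compact and $u$ is non-constant, $u$ attains its maximum at some $p_+$ and its minimum at some $p_-$, so $\nabla u$ vanishes at $p_\pm$ and the regular set $M_0:=\{p\in M:\nabla u(p)\neq0\}$ is a proper open subset. First I would record three consequences of the hypothesis: differentiating $|\nabla u|^2$ gives $\nabla_i|\nabla u|^2=2\nabla^ju\,\nabla_i\nabla_ju=2\varphi\,\nabla_iu$, so $\nabla|\nabla u|^2$ is everywhere proportional to $\nabla u$ and hence $|\nabla u|^2$ is locally constant on the level sets of $u$; each level set in $M_0$ is totally umbilic with second fundamental form $(\varphi/|\nabla u|)$ times its induced metric; and $\nabla_{\nabla u}\nabla u=\varphi\,\nabla u$, so the unit field $N:=\nabla u/|\nabla u|$ on $M_0$ is geodesic, $\nabla_NN=0$.

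Next I would build the local model on $M_0$. Taking arclength $r$ along the geodesics of $N$ as a coordinate and the connected level sets of $u$ as cross-sections, one gets $g=dr^2+g_r$ with $\partial_ru=|\nabla u|$, $\partial_r|\nabla u|=\varphi$, and $\partial_rg_r=(2\varphi/|\nabla u|)\,g_r$. On the part of $M_0$ whose level sets are connected, $|\nabla u|^2$ is a function $h(u)$ of $u$, and then $\varphi=\tfrac12h'(u)$ is too, so $\varphi/|\nabla u|$ depends only on $r$; integrating the evolution equation for $g_r$ then yields $g_r=f(r)^2h_0$ for a fixed metric $h_0$ on the cross-section and a positive function $f$, i.e.\ $g$ is locally the warped product $dr^2+f(r)^2h_0$.

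The main obstacle is to globalize this picture and to pin down $h_0$; this is where essentially all of the work lies. Near $p_-$ the Hessian $\varphi(p_-)g$ is positive semi-definite, and one must first exclude the degenerate case $\varphi(p_-)=0$ (by a maximum-principle / unique-continuation argument for the overdetermined system, which forces $u$ to be constant); then $p_-$ is a non-degenerate minimum, hence isolated, with small connected geodesic spheres as nearby level sets. Along a unit-speed geodesic $\gamma$ issuing from $p_-$, the function $\psi:=u\circ\gamma$ solves $\psi''=\varphi=\tfrac12h'(\psi)$ with $\psi(0)=u(p_-)$, $\psi'(0)=0$ as long as $\gamma\subset M_0$; by ODE uniqueness $\psi$ is the same for every direction, so $u$ is radial about $p_-$, and the energy identity $(\psi')^2=h(\psi)$ combined with Cauchy--Schwarz forces $\dot\gamma=N$, i.e.\ every such geodesic is a reparametrized gradient trajectory. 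Following these until $h$ vanishes again shows that all geodesics from $p_-$ reach a single critical point $p_+$ (the maximum) at one common distance $L$, so $u$ has exactly the two critical points $p_\pm$, $M=\overline{B(p_-,L)}$ is diffeomorphic to $S^n$, and $g=dr^2+f(r)^2h_0$ holds on $M\setminus\{p_\pm\}$ with a single cross-section. Smoothness of $g$ at the pole $p_-$ in geodesic polar coordinates then forces $f(0)=0$, $f'(0)=1$, and $h_0=g_{S^{n-1}(1)}$, the round metric of radius one; similarly $f(L)=0$, $f'(L)=-1$ at $p_+$. The delicate points are ruling out degenerate critical points and disconnected level sets, and controlling $f$ (equivalently $h$) at the two poles.

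Finally I would recognise such a metric as conformally round. With $g=dr^2+f(r)^2g_{S^{n-1}(1)}$ on $S^n$ minus the poles, set $s:=\int_{r_0}^rf(\rho)^{-1}\,d\rho$; since $f(\rho)\sim\rho$ as $\rho\to0$ and $f(\rho)\sim L-\rho$ as $\rho\to L$, the coordinate $s$ ranges over all of $\mathbb{R}$ and
\begin{equation*}
g=f(r)^2\bigl(ds^2+g_{S^{n-1}(1)}\bigr),
\end{equation*}
where $ds^2+g_{S^{n-1}(1)}$ is the standard cylindrical metric on $\mathbb{R}\times S^{n-1}$. The cylinder is conformal to $S^n(1)$ with two points removed (it equals $\rho^{-2}$ times the flat metric on $\mathbb{R}^n\setminus\{0\}$ via $\rho=e^s$, and the flat metric is conformal to the round one), its two ends $s\to\pm\infty$ corresponding to $p_-$ and $p_+$. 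Hence $g$ and the round metric are conformal on $S^n\setminus\{p_\pm\}$; since both are smooth metrics on the closed manifold $S^n$, the conformal factor is automatically smooth and positive everywhere, so $(M,g)$ is globally conformal to $S^n(1)$.
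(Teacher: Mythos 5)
Your plan is essentially a reconstruction of the input that the paper does \emph{not} prove but imports: the paper quotes Tashiro's classification \cite[Lemma 2.2]{T1}, recorded in Section 3 as Property A ($u$ has exactly two critical points and, in polar form, $g=dt^2+\bigl(\psi'(t)/\psi''(0)\bigr)^2 g_{n-1}$ on $(0,b)\times S^{n-1}$), and then gets conformality by the single substitution $\theta(t)=2\arctan\exp\int_{t_0}^t\psi''(0)/\psi'(s)\,ds$, which rewrites $g$ as $\frac{\psi'(t)^2}{\psi''(0)^2\sin^2\theta}\,(d\theta^2+\sin^2\theta\,g_{n-1})$. Your final step is the same change of variables in two stages: your cylinder coordinate $s=\int f^{-1}\,dr$ followed by the cylinder-to-sphere map is exactly $\theta=2\arctan e^{s}$, and your observation that two smooth metrics on the closed $S^n$ that are conformal off the two poles are conformal everywhere is a clean substitute for the paper's verification that the explicit factor extends smoothly across the poles. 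What you do differently is to re-derive the structural theorem instead of citing it; that buys self-containedness, but two points in your sketch need to be made precise for it to close. First, the exclusion of a degenerate critical point is not a unique-continuation argument as written: $\psi''=\varphi$ is not a closed ODE, since $\varphi$ is an independent unknown. You must first differentiate $\nabla^2u=\varphi g$, use the Ricci identity and contract to get a pointwise identity expressing $(n-1)\,d\varphi$ through $\Ric(\nabla u,\cdot)$; only then is $(u,\nabla u,\varphi)$ governed by a closed linear first-order system along curves, so that $\nabla u(p_-)=0$ and $\varphi(p_-)=0$ propagate and force $u$ to be constant. Second, the step ``by ODE uniqueness $\psi$ is the same for every direction'' presupposes that $\varphi$ (equivalently $h$) is a well-defined function of $u$ on a whole punctured neighbourhood of $p_-$, which requires connectedness of the nearby level sets (Morse lemma, available only after nondegeneracy); alternatively, the standard shortcut is that $\tfrac{D}{dt}\nabla u=\varphi\,\dot\gamma$ keeps $\nabla u$ radial along geodesics from $p_-$, so $u$ is constant on small geodesic spheres. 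The remaining globalization --- all radial geodesics reaching a single second critical point at a common distance, with connected cross-section forced round by smoothness at the pole --- is precisely the content of Tashiro's theorem that the paper deliberately leaves to \cite{T1}.
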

\noindent This theorem plays an important role in Theorem A below.
We review Tashiro's work in section 3.

In this paper, inspired by these results, we define non-negative real numbers $\Omega_1(g)\geq \Omega_2(g)\geq \Omega_3(g)\geq \cdots \to 0$ for a closed Riemannian manifold $(M,g)$, and study their properties. The value of $\Omega_1(g)$ is defined by
\begin{equation*}
\Omega_1(g)=\sup \left\{\int_M \Ric(\nabla v,\nabla v)\,d\mu_g:v\in L^2_2(M) \text{ and } \|\Delta v\|_{L^2}=1\right\}.
\end{equation*}
We define $\{\Omega_k(g)\}_{k\geq2}$ in Definition \ref{def1} and Definition \ref{def2}. 
If $\Omega_1(g)>0$, then we have $\Omega_k(g)>0$ for any $k\in \mathbb{Z}_{>0}$, and there exists a non-constant function $v_k\in C^\infty(M)$ such that $\Delta^2 v_k=\frac{1}{\Omega_k(g)}\nabla^\ast(\Ric(\nabla v_k,\cdot))$.
We have the following two theorems:
\begin{A}
For any closed $n$-dimensional Riemannian manifold $(M,g)$, we have $\Omega_1(g) \leq \frac{n-1}{n}$.
If $\Omega_1(g)=\frac{n-1}{n}$, then $(M,g)$ is conformal to the standard sphere.
\end{A}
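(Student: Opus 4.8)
The plan is to derive the bound $\Omega_1(g)\le\frac{n-1}{n}$ from the Bochner formula together with the elementary pointwise estimate $|\nabla^2 v|^2\ge\frac1n(\Delta v)^2$, and then to obtain the rigidity statement by feeding the resulting equality case into Tashiro's theorem (Theorem~\ref{Tas}).

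First I would fix $v\in C^\infty(M)$ (the general case $v\in L^2_2(M)$ follows by density, since both sides below are continuous in the $L^2_2$-norm) and integrate the Bochner formula
\[
-\tfrac12\Delta|\nabla v|^2 = |\nabla^2 v|^2 - \langle\nabla v,\nabla\Delta v\rangle + \Ric(\nabla v,\nabla v)
\]
over the closed manifold $M$. The left-hand side integrates to zero and $\int_M\langle\nabla v,\nabla\Delta v\rangle\,d\mu_g=\int_M(\Delta v)^2\,d\mu_g=\|\Delta v\|_{L^2}^2$, so
\[
\int_M\Ric(\nabla v,\nabla v)\,d\mu_g = \|\Delta v\|_{L^2}^2 - \int_M|\nabla^2 v|^2\,d\mu_g .
\]
Cauchy--Schwarz applied to the symmetric $2$-tensor $\nabla^2 v$ against $g$ gives the pointwise bound $|\nabla^2 v|^2\ge\frac1n(\tr\nabla^2 v)^2=\frac1n(\Delta v)^2$, hence $\int_M|\nabla^2 v|^2\,d\mu_g\ge\frac1n\|\Delta v\|_{L^2}^2$ and therefore $\int_M\Ric(\nabla v,\nabla v)\,d\mu_g\le\frac{n-1}{n}\|\Delta v\|_{L^2}^2$; taking the supremum over admissible $v$ yields $\Omega_1(g)\le\frac{n-1}{n}$.

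For the equality case, assume $\Omega_1(g)=\frac{n-1}{n}$ (in particular $\Omega_1(g)>0$) and let $v=v_1$ be the non-constant smooth extremal function supplied by the general theory of the $\Omega_k$, normalized so that $\|\Delta v\|_{L^2}=1$; then $\int_M\Ric(\nabla v,\nabla v)\,d\mu_g=\frac{n-1}{n}$ and the chain of inequalities above must hold with equality. In particular $\int_M\big(|\nabla^2 v|^2-\frac1n(\Delta v)^2\big)\,d\mu_g=0$ with nonnegative integrand, so the integrand vanishes identically — the equality case of the Cauchy--Schwarz step — which forces the traceless part of $\nabla^2 v$ to vanish, i.e.\ $\nabla^2 v=-\frac{\Delta v}{n}g$. (If one prefers not to invoke an extremizer, the same conclusion comes from a maximizing sequence: after normalizing mean values one uses elliptic estimates and Rellich's theorem to pass to a limit $v$ with $\int_M\Ric(\nabla v,\nabla v)\,d\mu_g=\frac{n-1}{n}$, and weak lower semicontinuity of the $L^2$-norm of the traceless Hessian gives $\nabla^2 v=-\frac{\Delta v}{n}g$, with $v$ non-constant since otherwise $\int_M\Ric(\nabla v,\nabla v)\,d\mu_g=0$.) Since $v$ is non-constant, Theorem~\ref{Tas} now shows that $(M,g)$ is conformal to $S^n(1)$, i.e.\ to the standard sphere.

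The inequality part is soft; the one delicate point is the equality case, namely producing a non-constant extremal (or near-extremal) function for which the Bochner identity can be run as an equality. Once such a $v$ is in hand, the equation $\nabla^2 v=-\frac{\Delta v}{n}g$ together with Tashiro's theorem finishes the argument immediately — which is precisely why the variational framework for the $\Omega_k$, with its existence and regularity theory for extremizers, is developed before Theorem~A.
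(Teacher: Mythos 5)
Your argument is correct and is essentially the paper's own proof: the paper likewise combines the Bochner formula (stated there for the $1$-form $dv$, which after integration gives exactly your identity $\int_M\Ric(\nabla v,\nabla v)\,d\mu_g=\|\Delta v\|_{L^2}^2-\|\nabla^2 v\|_{L^2}^2$) with the pointwise trace inequality $|\nabla^2 v|^2\geq\frac{1}{n}(\Delta v)^2$, and in the equality case invokes the smooth extremizer $v_1$ furnished by Lemmas \ref{fund1}--\ref{fund2} to get $\nabla^2 v_1=-\frac{\Delta v_1}{n}g$ and then Theorem \ref{Tas}. The only cosmetic differences are your density remark for $v\in L^2_2(M)$ and the optional maximizing-sequence variant, neither of which changes the substance.
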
 
\begin{B}
Let $(M,g)$ be a closed Riemannian manifold of dimension $n$.
If $\Omega_1(g)=\Omega_2(g)=\frac{n-1}{n}$, then $(M,g)$ is isometric to the standard sphere of a certain radius.
\end{B}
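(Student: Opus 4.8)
The plan is to bootstrap from Theorem~A. Recall that the bound $\Omega_1(g)\le\frac{n-1}{n}$ comes from the Bochner identity on a closed manifold,
\begin{equation*}
\int_M(\Delta v)^2\,d\mu_g=\int_M|\nabla^2 v|^2\,d\mu_g+\int_M\Ric(\nabla v,\nabla v)\,d\mu_g ,
\end{equation*}
together with the pointwise inequality $|\nabla^2 v|^2\ge\frac1n(\Delta v)^2$; equality along some $v$ forces the trace-free Hessian to vanish, i.e.\ $\nabla^2 v=-\frac{\Delta v}{n}g$. Since $\Omega_1(g)=\frac{n-1}{n}$ is attained there is a maximizer $v_1\in C^\infty(M)$ of this type, and since $\Omega_2(g)=\frac{n-1}{n}$ as well, the max--min definition of $\Omega_2(g)$ (whose extremals $v_k$ solve $\Delta^2 v_k=\frac{1}{\Omega_k(g)}\nabla^\ast(\Ric(\nabla v_k,\cdot))$, with extremals for distinct indices independent modulo constants) supplies a second maximizer $v_2$, linearly independent of $v_1$ modulo constants, again satisfying $\nabla^2 v_2=-\frac{\Delta v_2}{n}g$. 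Thus $v_1$ and $v_2$ are two non-constant functions of the kind occurring in Theorem~\ref{Tas}.

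By Tashiro's classification of closed manifolds carrying such a function (of which Theorem~\ref{Tas} is a consequence; see Section~3), $v_1$ forces $(M,g)$ to be rotationally symmetric: after rescaling there are $L>0$ and a smooth $h\colon[0,L]\to[0,\infty)$, positive on $(0,L)$ with $h(0)=h(L)=0$, $h'(0)=1$ and $h'(L)=-1$, such that $g=dr^2+h(r)^2 g_{S^{n-1}(1)}$ off the two poles, with $v_1=v_1(r)$. Writing $v_2=v_2(r,\theta)$ and imposing $\nabla^2 v_2=-\frac{\Delta v_2}{n}g=:\phi\,g$, the mixed $r$--$\theta$ components of the Hessian in these coordinates give $\partial_r\partial_\theta v_2=\frac{h'}{h}\partial_\theta v_2$, whence $v_2=h(r)F(\theta)+G(r)$ for some $F\in C^\infty(S^{n-1})$ and $G\in C^\infty((0,L))$. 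The tangential components then reduce to $\nabla^2_{S^{n-1}}F=-\frac{\Delta_{S^{n-1}}F}{n-1}\,g_{S^{n-1}}$, so by Obata's theorem on $S^{n-1}(1)$ (for $n\ge3$; the two-dimensional case is similar) one gets $F=c_0+\ell$ with $\ell$ a first eigenfunction. If $F$ were constant, $v_2$ would be radial, hence an affine function of $v_1$, contradicting independence; so $\ell$ is non-constant, with $\Delta_{S^{n-1}}\ell=(n-1)\ell$ and $\nabla^2_{S^{n-1}}\ell=-\ell\,g_{S^{n-1}}$.

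It remains to equate the expression for $\phi$ coming from the $(\partial_r,\partial_r)$ component with the one coming from the trace of the tangential components. Substituting $v_2=h(r)\ell(\theta)+\widetilde G(r)$ (with $\widetilde G$ absorbing the $\theta$-independent terms) and collecting the part proportional to $\ell$ separately from the $\theta$-independent part — legitimate because $\ell$ is non-constant — leaves the ordinary differential equation $h\,h''-(h')^2=-1$ on $(0,L)$ (and an equation for $\widetilde G$ carrying no new information). One integration yields $(h')^2=1+Ah^2$ for a constant $A$; the conditions $h(0)=h(L)=0$ and $h'(0)=1$ rule out $A\ge0$ and force $A=-(\pi/L)^2$, hence $h(r)=\frac{L}{\pi}\sin\bigl(\frac{\pi r}{L}\bigr)$. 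Therefore $g=dr^2+\bigl(\frac{L}{\pi}\bigr)^2\sin^2\bigl(\frac{\pi r}{L}\bigr)g_{S^{n-1}(1)}$, which under the substitution $r=\frac{L}{\pi}s$ is exactly the round metric of radius $L/\pi$ on $S^n$. The main obstacle I anticipate is not the computation but the two ``soft'' points: reading off from the definition of $\Omega_2$ that $\Omega_2(g)=\frac{n-1}{n}$ really produces a second Tashiro-type function independent of $v_1$ modulo constants, and invoking the precise form of Tashiro's theorem that delivers the warped-product normalization with the stated boundary data for $h$; once these are in place the separation of variables and the ODE are elementary.
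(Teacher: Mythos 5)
Your argument is correct in substance, but it reaches the key rigidity statement by a genuinely different route than the paper. You both reduce to the same fact (stated in the paper as Lemma \ref{clam}): two linearly independent functions with $\nabla^2 u=-\frac{\Delta u}{n}g$ force $(M,g)$ to be round; and your first step — extracting from $\Omega_1(g)=\Omega_2(g)=\frac{n-1}{n}$ two such maximizers $v_1\perp v_2$ via the equality case of the Bochner inequality — is exactly what the paper does. The difference is in proving the rigidity lemma. You work in the Tashiro warped-product coordinates $g=dr^2+h(r)^2g_{S^{n-1}}$ determined by $v_1$ and analyze the second concircular function directly: the mixed Hessian components give the separation $v_2=h(r)F(\theta)+G(r)$, the trace-free tangential part gives Tashiro's equation for $F$ on the unit $S^{n-1}$ (so $F=c_0+\ell$ with $\nabla^2_{S^{n-1}}\ell=-\ell\,g_{S^{n-1}}$, $\ell$ non-constant by independence), and collecting coefficients of $\ell$ yields $hh''-(h')^2=-1$, hence $h=\frac{L}{\pi}\sin(\frac{\pi r}{L})$. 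The paper instead argues synthetically: it shows the critical points of $u_2$ avoid the poles of $u_1$, builds a third concircular function $u_3$ whose critical point lies on the totally geodesic slice $\{t_1\}\times S^{n-1}$ (where $\psi_1''(t_1)=0$), and uses the rotational-symmetry property (Property B: radial Ricci is the same along all geodesics from a critical point) to conclude that $\Ric(\partial_t,\partial_t)=-(n-1)\psi_1'''/\psi_1'$ is constant, which gives the same sine solution. Your computation is more explicit and avoids Properties B and the auxiliary $u_3$, at the cost of the coordinate Hessian calculus and the fiberwise Obata classification; the paper's argument avoids separation of variables but leans on more of Tashiro's structural output.

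Two small points to tighten. First, your fiberwise Obata step genuinely needs $n-1\geq 2$; for $n=2$ the tangential trace-free part is vacuous, so "the two-dimensional case is similar" is not literally true as written. It is easily repaired without Obata at all: separation gives $\nabla^2_{S^{n-1}}F=\bigl[(hh''-(h')^2)F+(hG''-h'G')\bigr]g_{S^{n-1}}$ (for $n=2$, $F''$ equals the bracket), and since the left side is $r$-independent and $F$ is non-constant, $hh''-(h')^2$ is a constant; letting $r\to 0$ with $h(0)=0$, $h'(0)=1$, $h''$ bounded shows the constant is $-1$, which also spares you the normalization of $\ell$ when $n\geq 3$. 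Second, when you extract $v_2$, note that attainment and smoothness of the $\Omega_2$-maximizer are exactly the content of Lemma \ref{fund1} and Proposition \ref{fund3} (applied on $\langle v_1\rangle^\perp$), and linear independence comes from $H$-orthogonality; with that made explicit your "soft" step is fully covered by the paper's Section 2.
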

\noindent Consequently, $\Omega_1(g)$ and $\Omega_2(g)$ characterize the $n$-dimensional standard sphere.
Roughly speaking, $\Omega_1(g)=\frac{n-1}{n}$ means that $(M,g)$ has a rotational symmetry.
Moreover, $\Omega_2(g)=\frac{n-1}{n}$ means that $(M,g)$ has another rotation axis.
Theorem B results from the fact that no Riemannian manifold except for the standard sphere has two distinct rotation axes (for the explicit statement, see Lemma \ref{clam}). 

The relationship between Theorem A and Lichnerowicz inequality is the following:
\begin{C}
If there exists a constant $r>0$ such that $\Ric \geq r g$, then
\begin{equation*}
\lambda_1 \geq\frac{r}{\Omega_1(g)}\geq \frac{n}{n-1}r.
\end{equation*}
\end{C}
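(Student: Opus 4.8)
The plan is to obtain both inequalities by combining the variational definition of $\Omega_1(g)$ with Theorem A. The second inequality $\frac{r}{\Omega_1(g)}\geq \frac{n}{n-1}r$ is essentially immediate: Theorem A gives $\Omega_1(g)\leq \frac{n-1}{n}$, so once we know $\Omega_1(g)>0$ (which will itself follow from the computation below), we may invert to get $\frac{r}{\Omega_1(g)}\geq \frac{r}{(n-1)/n}=\frac{n}{n-1}r$.

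For the first inequality $\lambda_1\geq \frac{r}{\Omega_1(g)}$, equivalently $\Omega_1(g)\geq \frac{r}{\lambda_1}$, I would simply test the supremum defining $\Omega_1(g)$ against a first eigenfunction. Let $\psi_1\in C^\infty(M)$ satisfy $\Delta\psi_1=\lambda_1\psi_1$ with $\|\psi_1\|_{L^2}=1$, and set $v=\psi_1/\lambda_1\in L^2_2(M)$, so that $\|\Delta v\|_{L^2}=1$ and $v$ is an admissible competitor. Using the hypothesis $\Ric\geq rg$ together with the identity $\int_M|\nabla\psi_1|^2\,d\mu_g=\int_M(\Delta\psi_1)\psi_1\,d\mu_g=\lambda_1$, one computes
\begin{equation*}
\int_M\Ric(\nabla v,\nabla v)\,d\mu_g=\frac{1}{\lambda_1^2}\int_M\Ric(\nabla\psi_1,\nabla\psi_1)\,d\mu_g\geq\frac{r}{\lambda_1^2}\int_M|\nabla\psi_1|^2\,d\mu_g=\frac{r}{\lambda_1}.
\end{equation*}
Hence $\Omega_1(g)\geq\frac{r}{\lambda_1}>0$, which is precisely the first inequality and also supplies the strict positivity of $\Omega_1(g)$ used in the previous paragraph.

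There is no genuinely hard step here; the only matters needing a little care are that $v=\psi_1/\lambda_1$ really lies in $L^2_2(M)$ with $\|\Delta v\|_{L^2}=1$ (which is clear since $\psi_1$ is smooth by elliptic regularity and $\lambda_1>0$, the latter because $M$ is closed and connected), and that the positivity $\Omega_1(g)>0$ needed to divide in the second inequality is produced by the same test-function estimate rather than assumed. I would therefore present the argument in the order above: first the test-function lower bound $\Omega_1(g)\geq r/\lambda_1$, and then the appeal to Theorem A for $\Omega_1(g)\leq (n-1)/n$.
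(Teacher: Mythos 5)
Your proof is correct: testing the supremum defining $\Omega_1(g)$ with $v=\psi_1/\lambda_1$ indeed gives $\Omega_1(g)\geq \Lambda_{\Ric}(v)\geq r/\lambda_1>0$, and combining this with Theorem A yields both inequalities; the normalization checks ($v\in H$ since $\psi_1$ is $L^2$-orthogonal to constants, $\|\Delta v\|_{L^2}=1$, $\lambda_1>0$) are all in order. The route is, however, more direct and more special than the paper's. The paper obtains Theorem C as the $k=1$ case of a general proposition asserting $\lambda_k\geq r/\Omega_k(g)$ for every $k$, proved in two lines by combining Corollary \ref{koyuti2} (which identifies $\Lambda_k(rg)=r/\lambda_k$) with the comparison Corollary \ref{comp} (monotonicity $\Lambda_k(S)\geq\Lambda_k(T)$ for $S\geq T$, which rests on the min-max characterization of Proposition \ref{mnmx}). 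Your computation is precisely the concrete $k=1$ instance of that monotonicity: the first eigenfunction is the extremizer for $\Lambda_1(rg)$, and $\Ric\geq rg$ lets you compare the two quotients at that single test function. What you gain is a self-contained, elementary argument that needs none of the section 2 machinery; what the paper's route buys is the whole family of inequalities $\lambda_k\geq r/\Omega_k(g)$ and the accompanying rigidity statement (equality for all $k$ forces $\Ric=rg$), neither of which is accessible from a single test function. The use of Theorem A for $\frac{r}{\Omega_1(g)}\geq\frac{n}{n-1}r$ is identical in both arguments.
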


In Theorem \ref{sph}, we see that the value of $\Omega_1(g)$ sharply reflects the spherical part of the manifold.
In section 4, we give some computation and examples.

In the next section, we define a family of non-negative numbers $\{\Lambda_k(S)\}_{k\in \mathbb{Z}_{>0}}$ for a closed Riemannian manifold $(M,g)$ and a symmetric tensor $S$ of type $(0,2)$ on $M$. In section 3, we specialize the case when $S=\Ric_g$, and $\Lambda_k(\Ric_g)$ recovers $\Omega_k(g)$ above.
\begin{sloppypar}
{\bf Acknowledgments}.\ 
I am grateful to my supervisor, Professor Shinichiroh Matsuo, for his useful comments and advice.
\end{sloppypar}

\section{The definition of $\Lambda_k(S)$ and elementary properties}

\subsection{Preliminaries}
\begin{notation}
Let \((M,g)\) be a closed Riemannian manifold of dimension $n$.
$H$ denotes the closed subspace of $L^2_2(M)$ defined by
\begin{equation*}
H=H_{(M,g)}=\{v\in L^2_2(M):\int_M v\,d\mu_g=0 \},
\end{equation*}
with an inner product
\begin{equation*}
\langle u,v\rangle_H=\int_M (\Delta u,\Delta v)\,d\mu_g \quad(u,v\in H).
\end{equation*}
\end{notation}
The norm induced by this inner product is equivalent to the norm induced by the standard inner product on $L^2_2(M)$.
Note that $\Ker \Delta |_H=\{0\}$, and so there exists a constant $C>0$ such that $\|v\|_{L^2_2}\leq C\|\Delta v\|_{L^2}$ for all $v\in H$.
This is what is called the elliptic estimate.

\begin{notation}
Let $S$ be a symmetric tensor of type $(0,2)$ on $M$. 
We define a functional $\Lambda_S \colon H\backslash \{0\} \to \mathbb{R}$ by
\begin{equation*}
\Lambda_S (v)=\frac{\int_M S(\nabla v,\nabla v)\,d\mu_g}{\|\Delta v\|^2_{L^2}}.
\end{equation*}
\end{notation}
By the elliptic estimate, we have $\sup_{v\in H\backslash \{0\} } \Lambda_S (v)<\infty$.
Note that, for each $v\in L^2_2(M)$, there exists a constant $c\in \mathbb{R}$ such that $v-c\in H$, and so
\begin{equation*}
\sup_{v\in H\backslash \{0\} } \Lambda_S (v)= \sup \left\{\frac{\int_M S(\nabla v,\nabla v)\,d\mu_g}{\|\Delta v\|^2_{L^2}}:v\in L^2_2(M) \text{ and } v \text{ is not a constant}\right\}.
\end{equation*}

\begin{Lem}\label{fund1}
Let $F\neq \{0\}$ be a closed subspace of $H$. 
If $ \sup_{v\in F\backslash \{0\} } \Lambda_S (v)> 0$, then there exists a function $u \in F \backslash \{0\}$ such that $\sup_{v\in F\backslash \{0\} } \Lambda_S (v)= \Lambda_S(u)$.
\end{Lem}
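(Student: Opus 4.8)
The plan is to produce $u$ as the limit of a maximizing sequence, exploiting that $\Lambda_S$ is homogeneous of degree zero and that $H$ carries a Hilbert-space structure. Write $Q(v)=\int_M S(\nabla v,\nabla v)\,d\mu_g$ and $\Lambda=\sup_{v\in F\backslash\{0\}}\Lambda_S(v)>0$. Since $\Lambda_S(tv)=\Lambda_S(v)$ for $t\neq0$, I may pick $v_k\in F$ with $\|v_k\|_H=1$, i.e.\ $\|\Delta v_k\|_{L^2}=1$, and $Q(v_k)=\Lambda_S(v_k)\to\Lambda$. As $F$ is a closed subspace of the Hilbert space $H$, it is itself a Hilbert space, so its closed unit ball is weakly sequentially compact; after passing to a subsequence, $v_k\rightharpoonup u$ weakly in $H$ for some $u\in F$ (closed subspaces are weakly closed), and $\|u\|_H\leq1$ by weak lower semicontinuity of the norm.

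Next I would pass from weak convergence in $H$ to strong convergence of the gradients. The elliptic estimate quoted above says the $H$-norm and the $L^2_2$-norm are equivalent on $H$, so $(v_k)$ is bounded in $L^2_2(M)$ and $v_k\rightharpoonup u$ weakly in $L^2_2(M)$ as well. By the Rellich--Kondrachov theorem the inclusion $L^2_2(M)\hookrightarrow L^2_1(M)$ is compact, hence $v_k\to u$ strongly in $L^2_1(M)$; in particular $\nabla v_k\to\nabla u$ in $L^2$. Since $M$ is closed and $S$ is continuous, $\|S\|_{L^\infty}<\infty$, and from
\[
|Q(v)-Q(w)|\leq\|S\|_{L^\infty}\bigl(\|\nabla v\|_{L^2}+\|\nabla w\|_{L^2}\bigr)\|\nabla v-\nabla w\|_{L^2}
\]
the functional $Q$ is continuous with respect to $L^2_1$-convergence. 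Therefore $Q(u)=\lim_k Q(v_k)=\Lambda$.

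Finally I would check that $u$ is genuinely a maximizer. Because $\Lambda>0=Q(0)$ we have $u\neq0$, so $\Lambda_S(u)$ is defined, and since $\|u\|_H\leq1$,
\[
\Lambda_S(u)=\frac{Q(u)}{\|u\|_H^2}\geq Q(u)=\Lambda.
\]
On the other hand $\Lambda_S(u)\leq\Lambda$ by definition of the supremum, so $\Lambda_S(u)=\Lambda$ (and in fact $\|u\|_H=1$), which is the assertion.

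The one step that carries the analytic content is the compactness: the quadratic form $Q$ is not coercive on $H$, so passing to the weak limit does not by itself preserve its value. What saves the argument is that $Q$ sees only $\nabla v$, that is, only the $L^2_1$-component of $v$, and on that component the $H$-bounded sequence $(v_k)$ does converge strongly by Rellich. Everything else---weak compactness, lower semicontinuity, and the homogeneity bookkeeping---is routine Hilbert-space soft analysis.
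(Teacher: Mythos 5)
Your proposal is correct, and it is the same direct-method argument as the paper's: a maximizing sequence normalized by $\|\Delta v_k\|_{L^2}=1$, weak compactness of the unit ball of the closed subspace $F$, the elliptic estimate to get an $L^2_2$ bound, and Rellich compactness to make the numerator $Q(v)=\int_M S(\nabla v,\nabla v)\,d\mu_g$ continuous along the sequence. The only real divergence is in the final step. The paper upgrades the weak limit to a strong limit in $H$: it expands $\Lambda-\Lambda_S(u_k)$ around $u$, uses $\int_M S(\nabla u,\nabla u)\,d\mu_g\leq\Lambda\|\Delta u\|_{L^2}^2$, and divides by $\Lambda>0$ to conclude $\|\Delta(u_k-u)\|_{L^2}\to 0$, whence $\|\Delta u\|_{L^2}=1$ and $\Lambda_S(u)=\Lambda$. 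You instead stop at weak convergence and finish with weak lower semicontinuity of the $H$-norm together with the degree-zero homogeneity of $\Lambda_S$: since $Q(u)=\Lambda>0$ forces $u\neq 0$ and $\|u\|_H\leq 1$, you get $\Lambda_S(u)=Q(u)/\|u\|_H^2\geq\Lambda$, and the reverse inequality is the definition of the supremum. Both routes use the hypothesis $\Lambda>0$ in an essential way (the paper to divide in the strong-convergence estimate, you to rule out $u=0$); yours is marginally softer since it never needs strong $H$-convergence of the maximizing sequence, while the paper's version additionally yields that the sequence itself converges strongly to the maximizer, a fact it does not need later either.
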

\begin{proof}
Put $\Lambda=\sup_{v\in F\backslash \{0\} } \Lambda_S (v)$.
Let $\{u_k\}$ be a sequence of $F$ such that $\Lambda=\lim_{k\to \infty} \Lambda_S(u_k)$ and $\|\Delta u_k\|_{L^2}=1$. 
By the elliptic estimate, there exists a constant $C>0$ such that $\|u_k\|_{L^2_2} \leq C$ for all $k$.
Thus, we can take a subsequence of $\{u_k\}$ (denote it again by $\{u_k\}$) and $u\in F$ such that
\begin{enumerate}
\item[] $u_k \to u$ (strongly in $L^2_1(M)$),
\item[] $u_k \rightharpoonup u$ (weakly in $F$).
\end{enumerate}
Then,
\begin{equation}\label{conv}
\begin{split}
&\Lambda-\Lambda_S(u_k)\\
=&\Lambda \int_M (\Delta u_k,\Delta u_k)\,d\mu_g - \int_M S(\nabla u_k,\nabla u_k)\,d\mu_g\\
=&\Lambda \int_M (\Delta u + \Delta(u_k -u),\Delta u + \Delta(u_k -u))\,d\mu_g\\
&\qquad - \int_M S(\nabla u,\nabla u)\,d\mu_g+\epsilon_k\\
=&\Lambda
 \|\Delta u\|^2_{L^2}+2\Lambda \int_M (\Delta (u_k - u),\Delta u)\,d\mu_g+\Lambda \|\Delta (u_k-u)\|^2_{L^2}\\
&\qquad - \int_M S(\nabla u,\nabla u)\,d\mu_g+ \epsilon_k\\
\geq& 2\Lambda \int_M (\Delta (u_k - u),\Delta u)\,d\mu_g+\Lambda \|\Delta (u_k-u)\|^2_{L^2}+\epsilon_k,
\end{split}
\end{equation}
where we put $\epsilon_k=\int_M S(\nabla u,\nabla u)\,d\mu_g-\int_M S(\nabla u_k,\nabla u_k)\,d\mu_g$.
In the last line, we have used $\int_M S(\nabla u,\nabla u)\,d\mu_g\leq \Lambda \|\Delta u\|^2_{L^2}$.
By (\ref{conv}), we have
\begin{equation}\label{est}
\begin{split}
&\Lambda \|\Delta (u_k-u)\|^2_{L^2}\\
\leq &\Lambda-\Lambda_S(u_k)-2\Lambda \int_M (\Delta (u_k - u),\Delta u)\,d\mu_g-\epsilon_k.
\end{split}
\end{equation}
The right hand side of (\ref{est}) converges to $0$ as $k\to \infty$. Since we assumed $\Lambda >0$, we get
\begin{equation*}
\lim \|\Delta (u_k-u)\|_{L^2}=0.
\end{equation*}
This implies that $u_k$ converges to $u$ strongly in $H$, and so $\Lambda = \Lambda_S(u)$.   
\end{proof}

Now, we consider the case when $F=H$.
\begin{Lem}\label{fund2}
Put $\Lambda_1(S)=\sup_{v\in H\backslash \{0\} } \Lambda_S (v)$.
If $\Lambda_1(S)>0$, for $u\in H\backslash \{0\} $, the following two conditions are equivalent:
\begin{itemize}
\item[(a)] $\Lambda_1(S)=\Lambda_S(u)$,
\item[(b)] $\Delta^2 u=\frac{1}{\Lambda_1(S)}\nabla^\ast (S(\nabla u,\cdot))$.
\end{itemize}
In particular, if $u$ satisfies $\Lambda_1(S)=\Lambda_S(u)$, then $u$ is smooth.
\end{Lem}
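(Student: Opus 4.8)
The plan is to read condition (a) as saying that $u$ is a critical point of the Rayleigh-type quotient $\Lambda_S$ on $H\backslash\{0\}$, to extract (b) as the associated Euler--Lagrange equation, to obtain smoothness by a standard elliptic bootstrap, and then to verify the converse by a one-line substitution.

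First, assume (a). Fix $w\in H$. Since $u\in H\backslash\{0\}$ we have $\|\Delta u\|_{L^2}>0$, hence $\|\Delta(u+tw)\|_{L^2}>0$ for $|t|$ small, and $t\mapsto\Lambda_S(u+tw)$ is smooth near $t=0$ with a maximum at $t=0$. Writing $N(v)=\int_M S(\nabla v,\nabla v)\,d\mu_g$ and $D(v)=\|\Delta v\|_{L^2}^2$, both quadratic in $v$, one has $\frac{d}{dt}\big|_{t=0}N(u+tw)=2\int_M S(\nabla u,\nabla w)\,d\mu_g$ and $\frac{d}{dt}\big|_{t=0}D(u+tw)=2\int_M(\Delta u)(\Delta w)\,d\mu_g$, so the vanishing of the derivative of $\Lambda_S(u+tw)=N/D$ at $t=0$ (using $\Lambda_S(u)=\Lambda_1(S)$) yields
\begin{equation*}
\int_M S(\nabla u,\nabla w)\,d\mu_g=\Lambda_1(S)\int_M(\Delta u)(\Delta w)\,d\mu_g\qquad\text{for all }w\in H.
\end{equation*}
Both sides vanish when $w$ is constant, so the identity persists for all $w\in L^2_2(M)$. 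Rewriting the left-hand side as $\int_M w\,\nabla^\ast(S(\nabla u,\cdot))\,d\mu_g$, this is precisely the weak form of $\Delta^2 u=\frac{1}{\Lambda_1(S)}\nabla^\ast(S(\nabla u,\cdot))$.

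Next I would upgrade regularity. Since $u\in L^2_2(M)$ and $S$ is smooth, $S(\nabla u,\cdot)\in L^2_1$, hence $\nabla^\ast(S(\nabla u,\cdot))\in L^2$; thus $u$ weakly solves $\Delta^2 u=f$ with $f\in L^2$, and elliptic regularity for the fourth-order elliptic operator $\Delta^2$ gives $u\in L^2_4$. Feeding this back gives $f\in L^2_2$, hence $u\in L^2_6$, and iterating, $u\in L^2_k$ for every $k$, so $u\in C^\infty(M)$; in particular (b) holds classically, which also proves the final assertion of the lemma. Conversely, assuming (b), pairing the equation with $u$ and integrating by parts gives $\|\Delta u\|_{L^2}^2=\frac{1}{\Lambda_1(S)}\int_M S(\nabla u,\nabla u)\,d\mu_g$, and since $\|\Delta u\|_{L^2}>0$ we obtain $\Lambda_S(u)=\Lambda_1(S)$, i.e.\ (a).

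The only delicate point is bookkeeping about what is legitimate before regularity is known: the pairings $\int_M(\Delta u)(\Delta w)\,d\mu_g=\int_M w\,\Delta^2 u\,d\mu_g$ and $\int_M S(\nabla u,\nabla w)\,d\mu_g=\int_M w\,\nabla^\ast(S(\nabla u,\cdot))\,d\mu_g$ should be used first only in their weak form (with $w$ smooth, or in $L^2_2(M)$), and the equation in (b) invoked in its strong form only after the bootstrap; apart from that, the argument is routine first-variation calculus together with the elliptic estimate already recalled above.
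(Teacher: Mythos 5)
Your proposal is correct and follows essentially the same route as the paper: a first-variation argument at the maximizer yielding the weak identity $\int_M S(\nabla u,\nabla w)\,d\mu_g=\Lambda_1(S)\int_M(\Delta u)(\Delta w)\,d\mu_g$ for all $w\in L^2_2(M)$, interpreted as the weak form of (b), and the converse by pairing the equation with $u$ and integrating by parts. The only cosmetic differences are that the paper differentiates the nonnegative functional $\Lambda_1(S)\|\Delta(u+tv)\|_{L^2}^2-\int_M S(\nabla(u+tv),\nabla(u+tv))\,d\mu_g$ rather than the quotient itself, and that you spell out the elliptic bootstrap for smoothness which the paper leaves implicit.
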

\begin{proof}
We first prove (a) $\Rightarrow$ (b).
Suppose that $u\in H\backslash \{0\} $ satisfies $\Lambda_1(S)=\Lambda_S(u)$, i.e.,
\begin{equation*}
\Lambda_1(S)=\sup_{v\in H\backslash \{0\}}\frac{\int_M S(\nabla v,\nabla v)\,d\mu_g}{\|\Delta v\|^2_{L^2}}=\frac{\int_M S(\nabla u,\nabla u)\,d\mu_g}{\|\Delta u\|^2_{L^2}}.
\end{equation*}
Then, for any $v \in L^2_2(M)$, we have
\begin{equation*}
\Lambda_1(S)\int_M (\Delta (u+tv),\Delta (u+tv))\,d\mu_g
-\int_M S(\nabla (u+tv),\nabla (u+tv))\,d\mu_g \geq 0 \quad (t\in \mathbb{R}),
\end{equation*}
\begin{equation*}
\Lambda_1(S)\int_M (\Delta u,\Delta u)\,d\mu_g - \int_M S(\nabla u,\nabla u)\,d\mu_g = 0.
\end{equation*}
Therefore,
\begin{equation*}
\begin{split}
0
=&\left. \frac{d}{dt}\right|_{t=0} \Bigl(\Lambda_1(S)\int_M (\Delta (u+tv),\Delta (u+tv))\,d\mu_g\\
&\qquad\qquad\quad -\int_M S(\nabla (u+tv),\nabla (u+tv))\,d\mu_g\Bigr)\\
=&2\Lambda_1(S)\int_M (\Delta u,\Delta v)\,d\mu_g - 2\int_M S(\nabla u,\nabla v)\,d\mu_g\\
=&2\Lambda_1(S)\int_M \left(\Delta^2 u-\frac{1}{\Lambda_1(S)}\nabla^\ast (S(\nabla u,\cdot)),v\right)\,d\mu_g.
\end{split}
\end{equation*}
Since the pairing $L^2_{-2}(M)\times L^2_2(M)\to \mathbb{R}$ is non-degenerate, we conclude $\Delta^2 u-\frac{1}{\Lambda_1(S)}\nabla^\ast (S(\nabla u,\cdot))=0$.

We next prove (b) $\Rightarrow$ (a).
Suppose that $\Delta^2 u=\frac{1}{\Lambda_1(S)}\nabla^\ast (S(\nabla u,\cdot))$.
Then we have,
\begin{equation*}
\int_M S(\nabla u,\nabla u)\,d\mu_g=\int_M (\nabla^\ast (S(\nabla u,\cdot)),u)\,d\mu_g  = \Lambda_1(S) \int_M (\Delta u,\Delta u)\,d\mu_g.
\end{equation*}
Therefore, we get $\Lambda_1(S)= \frac{\int_M S(\nabla u,\nabla u)\,d\mu_g}{\|\Delta u\|^2_{L^2}}=\Lambda_S(u)$.   
\end{proof}

Next we consider the sign of $\sup_{v\in F\backslash \{0\} } \Lambda_S (v)$.

\begin{Lem}\label{lem}
For any symmetric tensor $S$ of type $(0,2)$ and any infinite dimensional closed subspace $F\subset H$, we have $\sup_{v\in F\backslash \{0\} } \Lambda_S (v)\geq 0$.
Moreover, if $S\leq 0$, then $\sup_{v\in F\backslash \{0\} } \Lambda_S (v)=0$ holds.
\end{Lem}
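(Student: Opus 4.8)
The plan is to prove the two assertions separately, both by exhibiting test functions in $F$ on which $\Lambda_S$ behaves as claimed. For the first assertion, $\sup_{v\in F\backslash\{0\}}\Lambda_S(v)\geq 0$, I would argue by contradiction: suppose the supremum is strictly negative, say $\sup_{v\in F\backslash\{0\}}\Lambda_S(v)=-\delta<0$. This means $\int_M S(\nabla v,\nabla v)\,d\mu_g\leq -\delta\|\Delta v\|^2_{L^2}$ for all $v\in F$, i.e. $-S$ restricted to gradients of functions in $F$ is, after composing with $\Delta$, uniformly positive. The idea is that on an infinite-dimensional subspace this cannot happen because one can find functions whose $L^2$-norm of the Hessian (hence, roughly, of $\Delta$) is large relative to the $L^2$-norm of the gradient: more precisely, I would use that $F$ is infinite-dimensional to extract, via the spectral theorem applied to $\Delta$ on $H$, a sequence $v_j\in F$ with $\|\nabla v_j\|_{L^2}\to 0$ while $\|\Delta v_j\|_{L^2}=1$ (this is possible because the self-adjoint operator $v\mapsto \Delta^{-1}(\text{something})$... more carefully, because an infinite-dimensional subspace meets the high-eigenvalue part of $\Delta$ in a way that forces $\|\nabla v\|_{L^2}^2/\|\Delta v\|_{L^2}^2$ to have infimum $0$). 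Then $|\int_M S(\nabla v_j,\nabla v_j)\,d\mu_g|\leq \|S\|_{L^\infty}\|\nabla v_j\|^2_{L^2}\to 0$, contradicting $\int_M S(\nabla v_j,\nabla v_j)\,d\mu_g\leq -\delta$.

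For the construction of such a sequence $v_j$, here is the cleaner route I would take. Let $\{\psi_i\}$ be the $L^2$-orthonormal eigenbasis of $\Delta$ with eigenvalues $0=\lambda_0<\lambda_1\leq\lambda_2\leq\cdots\to\infty$; note $\psi_i\in H$ for $i\geq1$. Since $F$ is infinite-dimensional and each eigenspace is finite-dimensional, for every $N$ the subspace $F$ is not contained in $\Span\{\psi_1,\dots,\psi_N\}$, so there is a nonzero $v\in F$ with $v\perp\psi_0,\dots,\psi_N$ in $L^2$. For such $v=\sum_{i>N}a_i\psi_i$ we have $\|\nabla v\|^2_{L^2}=\sum_{i>N}\lambda_i a_i^2$ and $\|\Delta v\|^2_{L^2}=\sum_{i>N}\lambda_i^2 a_i^2\geq \lambda_{N+1}\sum_{i>N}\lambda_i a_i^2=\lambda_{N+1}\|\nabla v\|^2_{L^2}$. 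Hence $\|\nabla v\|^2_{L^2}/\|\Delta v\|^2_{L^2}\leq 1/\lambda_{N+1}$, and normalizing $\|\Delta v\|_{L^2}=1$ gives $\|\nabla v_N\|_{L^2}\leq\lambda_{N+1}^{-1/2}\to 0$ as $N\to\infty$. This is exactly the sequence needed above.

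For the second assertion, suppose $S\leq 0$, i.e. $S(X,X)\leq 0$ for every tangent vector $X$. Then for every $v\in H\backslash\{0\}$ we have $\int_M S(\nabla v,\nabla v)\,d\mu_g\leq 0$, so $\Lambda_S(v)\leq 0$, whence $\sup_{v\in F\backslash\{0\}}\Lambda_S(v)\leq 0$. Combined with the first assertion (which gives $\geq 0$ when $F$ is infinite-dimensional), we conclude $\sup_{v\in F\backslash\{0\}}\Lambda_S(v)=0$.

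I expect the main obstacle to be a purely bookkeeping one: making the orthogonality argument in the second paragraph fully rigorous, in particular checking that $v\in F$ with $v\perp_{L^2}\Span\{\psi_0,\dots,\psi_N\}$ indeed exists and is nonzero — this is a standard codimension count (the orthogonality conditions cut out a subspace of codimension at most $N+1$ in the infinite-dimensional $F$) — and confirming that the norm $\|\cdot\|_H$ is comparable to $\|\cdot\|_{L^2_2}$ so that the elements produced genuinely lie in $H$ as required. Everything else is a direct estimate using $\|S\|_{L^\infty}<\infty$ on the closed manifold $M$.
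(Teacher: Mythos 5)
Your proof is correct and is essentially the paper's argument: both rest on picking, for each $N$, a nonzero $v\in F$ that is $L^2$-orthogonal to $\psi_0,\dots,\psi_N$, using $\|\nabla v\|_{L^2}^2\leq\lambda_{N+1}^{-1}\|\Delta v\|_{L^2}^2$ together with the boundedness of $S$ on the compact manifold to force $\Lambda_S(v)$ above a quantity tending to $0$, and then the trivial observation $\Lambda_S\leq 0$ when $S\leq 0$. The only difference is cosmetic: you phrase the first assertion as a contradiction with the bound $\|S\|_{L^\infty}$, while the paper argues directly that $\sup\Lambda_S\geq -c/\lambda_{k+1}\to 0$ using only the one-sided bound $S\geq -cg$.
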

\begin{proof}
Since $M$ is compact, there exists a constant $c>0$ such that $S\geq -cg$.
Let
\begin{equation*}
0=\lambda_0 <\lambda_1\leq \lambda_2 \leq \cdots \to \infty
\end{equation*}
be the eigenvalues of the Laplacian $\Delta$ and $\{ \psi_i \}$ a complete orthonormal system in $L^2(M)$ of the eigenfunctions of $\Delta$:
\begin{equation*}
\Delta \psi_i= \lambda_i \psi_i.
\end{equation*}

Since $F\subset H$ is infinite dimensional, for any $k=0,1,2,\ldots$, we can take $v_k \in F\backslash \{0\}$ that is orthogonal to $\psi_0,\ldots,\psi_k$ in $L^2(M)$.
Then,
\begin{equation*}
\begin{split}
\int_M S(\nabla v_k,\nabla v_k)\,d\mu_g
&\geq -c \int_M (\nabla v_k,\nabla v_k)\,d\mu_g\\
&\geq -\frac{c}{\lambda_{k+1}}\|\Delta v_k\|^2_{L^2}.
\end{split}
\end{equation*}
Therefore, $\sup_{v\in F\backslash \{0\} } \Lambda_S (v)\geq \Lambda_S(v_k)\geq -\frac{c}{\lambda_{k+1}}$.
The right hand side converges to $0$ as $k\to \infty$, and so we get  $\sup_{v\in F\backslash \{0\} } \Lambda_S (v)\geq0$.

Moreover, if $S\leq0$, we have $\Lambda_S(v) \leq 0$ for any $v\in F\backslash \{0\}$. Therefore, $\sup_{v\in F\backslash \{0\} } \Lambda_S (v)\leq 0$.
Thus, we get $\sup_{v\in F\backslash \{0\} } \Lambda_S (v)=0$.  
\end{proof}

In the case when $F=H$, the converse is also true:
\begin{Lem}\label{one}
If $\sup_{v\in H\backslash \{0\} } \Lambda_S (v)=0$, then we have \(S\leq 0\).
\end{Lem}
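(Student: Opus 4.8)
The plan is to prove the contrapositive: assuming $S \not\leq 0$ at some point, construct a test function $v \in H$ with $\Lambda_S(v) > 0$. The key idea is localization. If $S \not\leq 0$, there is a point $p \in M$ and a unit tangent vector $X \in T_pM$ with $S_p(X,X) = 2\delta > 0$. By continuity of $S$ and smoothness of the metric, there is a small coordinate ball $B = B(p,\rho)$ on which $S(Y,Y) \geq \delta |Y|^2$ for all tangent vectors $Y$ lying in a narrow cone around the coordinate direction $\partial_1$ corresponding to $X$; shrinking $\rho$ further, we may also assume the metric is uniformly close to the Euclidean metric on $B$ in these coordinates.

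The main step is then to exhibit a function supported in $B$ whose gradient is concentrated in the good cone of directions while controlling $\|\Delta v\|_{L^2}$ from above. The natural candidate is a \emph{highly oscillatory plane wave cut off by a bump}: take $v_\varepsilon(x) = \chi(x)\sin(x^1/\varepsilon)$, where $\chi \in C_c^\infty(B)$ is a fixed cutoff with $\chi \equiv 1$ on $B(p,\rho/2)$, and $x^1$ is the first coordinate. As $\varepsilon \to 0$, the gradient is dominated by the term $\varepsilon^{-1}\chi(x)\cos(x^1/\varepsilon)\,\partial_1$, so $\int_M S(\nabla v_\varepsilon, \nabla v_\varepsilon)\,d\mu_g \geq \delta \varepsilon^{-2}\int \chi^2 \cos^2(x^1/\varepsilon)\,d\mu_g(1+o(1)) \sim c_1 \varepsilon^{-2}$ for a constant $c_1 > 0$, using that $\nabla v_\varepsilon$ points (to leading order) in the direction $\partial_1$, which lies in the good cone. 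Meanwhile $\Delta v_\varepsilon = -g^{11}\varepsilon^{-2}\chi \sin(x^1/\varepsilon) + O(\varepsilon^{-1})$, so $\|\Delta v_\varepsilon\|_{L^2}^2 \sim c_2 \varepsilon^{-4}$. Hence $\Lambda_S(v_\varepsilon) \sim (c_1/c_2)\varepsilon^2 \to 0$ — which is the wrong sign of conclusion!

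So the plane-wave construction must be calibrated differently: instead of driving $\varepsilon \to 0$, one should pick $\varepsilon$ \emph{fixed and small} and rescale the whole construction. Concretely, let $v_\varepsilon$ live on a ball $B(p,\varepsilon)$ of radius $\varepsilon$ and oscillate at the scale of that ball, $v_\varepsilon(x) = \chi\big(\tfrac{x-p}{\varepsilon}\big)\sin\big(\tfrac{x^1-p^1}{\varepsilon}\big)$ with $\chi$ a fixed bump on the unit ball. Under this single-scale rescaling, \emph{both} $\int S(\nabla v_\varepsilon,\nabla v_\varepsilon)\,d\mu_g$ and $\|\Delta v_\varepsilon\|_{L^2}^2$ scale the same way in $\varepsilon$ (the Dirichlet-type integral scales like $\varepsilon^{n-2}$ and the bi-Laplacian integral like $\varepsilon^{n-4}$), so the ratio $\Lambda_S(v_\varepsilon)$ converges to the corresponding ratio for the \emph{frozen-coefficient Euclidean problem} on the unit ball with constant tensor $S_p$. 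That limiting ratio is $\int_{\mathbb{R}^n} S_p(\nabla w,\nabla w)\,dx \big/ \int_{\mathbb{R}^n}(\Delta_{\mathrm{eucl}} w)^2\,dx$ for a fixed $w = \chi \sin(x^1)$, and since $S_p(\nabla w,\nabla w)$ has a genuinely positive part coming from the $(\partial_1 w)^2$ term on the region where $\chi \equiv 1$, one can arrange (by choosing $\chi$ suitably concentrated) that this Euclidean ratio is strictly positive. Therefore $\Lambda_S(v_\varepsilon) > 0$ for $\varepsilon$ small, contradicting $\sup_{v\in H\setminus\{0\}}\Lambda_S(v) = 0$; note $v_\varepsilon \in H$ after subtracting its (small) mean, which does not affect $\Lambda_S$.

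\textbf{The main obstacle} is making the Euclidean frozen-coefficient ratio strictly positive for a compactly supported $w$: the bi-Laplacian in the denominator sees the full oscillation of $w$ including the boundary layer where $\chi$ varies, and one must ensure the positive contribution $\delta\int|\nabla w|^2$ from the interior is not swamped by a negative contribution from directions outside the good cone near $\partial(\mathrm{supp}\,\chi)$. This is handled by taking $\chi$ that equals $1$ on a large fraction of its support and has its transition confined to a thin shell, so the "bad" region has small measure while the "good" region — where $\nabla w$ is essentially $\cos(x^1)\partial_1$, inside the cone — dominates; a careful but elementary estimate separating these two regions closes the argument. An alternative, cleaner route that sidesteps the cone bookkeeping: apply Lemma~\ref{one}'s hypothesis directly through Lemma~\ref{lem}'s machinery is not available (that lemma needs $S \leq 0$), so instead one can invoke the known variational characterization that $\sup_v \Lambda_S(v)$ dominates the "bottom of the essential spectrum" type quantity — but the hands-on plane-wave argument above is the most self-contained and is what I would write up.
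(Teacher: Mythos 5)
Your first construction is, in essence, the paper's own proof, and you abandoned it for the wrong reason. The paper deduces Lemma \ref{one} from the $k=1$ case of Lemma \ref{subsp}, proved in the Appendix: localize at a point where $S(X_0,X_0)=2\delta>0$, fix a cutoff $\psi$ once and for all, and test with $\psi\sin(mNx^1)$ for large frequency; the positive term scales like $m^2N^2\delta$ while the terms involving $\nabla\psi$ scale like $mN$, so $\int_M S(\nabla v,\nabla v)\,d\mu_g>0$ for large $m$, hence $\Lambda_S(v)>0$. Your computation $\Lambda_S(v_\varepsilon)\sim (c_1/c_2)\varepsilon^{2}$ with $c_1>0$ is not ``the wrong sign of conclusion'': to contradict the hypothesis $\sup_{v\in H\setminus\{0\}}\Lambda_S(v)=0$ you only need \emph{one} function with $\Lambda_S(v)>0$, no matter how small the value; the lemma does not require the supremum to be bounded away from zero. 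Had you kept that first family and simply observed that its numerator is $\delta\varepsilon^{-2}c_1(1+o(1))>0$ for small $\varepsilon$ (the denominator being automatically positive), the proof would be complete and essentially identical to the paper's.

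The route you actually adopt has a genuine gap at exactly the point you flag as ``the main obstacle''. In the single-scale construction the oscillation and the cutoff live at the same scale, so there is no large frequency parameter left to dominate the cutoff terms, and the positivity of the frozen Euclidean quantity $\int S_p(\nabla w,\nabla w)\,dy$ for $w=\chi(y)\sin(y^1)$ is not established by your argument. The proposed mechanism --- take $\chi\equiv 1$ on most of its support with the transition confined to a thin shell of thickness $h$ --- fails: on that shell $|\nabla\chi|\sim h^{-1}$, so $\int\sin^2(y^1)\,S_p(\nabla\chi,\nabla\chi)$ is bounded below in size by a constant times $h^{-1}$ when the negative part of $S_p$ is nontrivial, i.e.\ thinning the shell makes the bad term larger, not smaller, while the good term $\int\chi^2\cos^2(y^1)$ stays bounded. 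Worse, for cutoffs of the type you describe a Poincar\'e-type bound in the transverse variables forces $\int|\nabla'\chi|^2$ to be at least a fixed multiple of $\int\chi^2$, so if the most negative eigenvalue of $S_p$ is large compared with $\delta$ the frozen ratio is negative for every such $\chi$; the construction could only be rescued by a genuinely different concentration (for instance a thin slab about $\{y^1=0\}$, where $\sin$ is small, balancing a gain of order $\delta\eta$ against a loss of order $\eta^3$), which is not what you wrote. As submitted, the decisive positivity claim is unproved and the suggested justification is false, so the proof does not close --- even though the correct argument was already contained in the construction you discarded.
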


Moreover, we can show the following:
\begin{Lem}\label{subsp}
If $S\leq0$ does not hold, for any positive integer $k\in \mathbb{N}$, there exists a $k$-dimensional subspace $V_k$ of $C^\infty(M)\cap H$ that satisfies the following: $\Lambda_S(v)>0$ holds for any $v \in V_k \backslash \{0\}$.
\end{Lem}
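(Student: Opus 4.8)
The plan is to localize. The hypothesis says precisely that $S$ is positive in some direction at some point, and I will turn this into $k$ test functions supported in pairwise disjoint tiny balls, each with a strictly positive ``$S$-energy'' $\int_M S(\nabla f,\nabla f)\,d\mu_g$, so that any nonzero linear combination again has positive $S$-energy. First I would unpack the hypothesis: the failure of $S\le 0$ means there exist $p_0\in M$ and $X_0\in T_{p_0}M$ with $S_{p_0}(X_0,X_0)>0$. Choosing a coordinate chart about $p_0$ and a linear change of coordinates so that $X_0=\partial_1|_{p_0}$, continuity of the component function $S_{11}$ yields a connected open neighbourhood $U\ni p_0$ on which $S_{11}>0$; thus at every $q\in U$ one has $S_q(\partial_1|_q,\partial_1|_q)>0$, i.e.\ $S_q$ is positive in a definite direction.

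The heart of the argument is the following local construction: for each $q\in U$ and each $\delta>0$ there is a nonzero $f\in C^\infty(M)$ with $\supp f\subset B(q,\delta)$ and $\int_M S(\nabla f,\nabla f)\,d\mu_g>0$. To build it, fix normal coordinates $(y^1,\dots,y^n)$ centred at $q$ with $\partial_1|_q$ a $g$-unit vector, so $g_{ab}(q)=\delta_{ab}$ and $(S_q)_{11}>0$. Fix nonzero $\alpha,\beta\in C^\infty_c(\mathbb{R})$, and set $\phi_R(y)=\alpha(y^1)\,\beta(y^2/R)\cdots\beta(y^n/R)$ and $f_{R,\epsilon}(y)=\epsilon\,\phi_R(y/\epsilon)$, extended by $0$ off the chart. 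A change of variables $y=\epsilon z$, together with $g_{ab}(y)=\delta_{ab}+O(|y|^2)$ and $S_{ab}(y)=(S_q)_{ab}+O(|y|)$ on the shrinking support, gives
\[
\int_M S(\nabla f_{R,\epsilon},\nabla f_{R,\epsilon})\,d\mu_g
=\epsilon^n\Big(\sum_{a,b}(S_q)_{ab}\!\int_{\mathbb{R}^n}\partial_a\phi_R\,\partial_b\phi_R\,dz+o(1)\Big)\qquad(\epsilon\to0),
\]
and an elementary scaling computation shows that all off-diagonal integrals vanish (since $\int\alpha'\alpha=0$ and $\int\beta'\beta=0$) while $\int(\partial_1\phi_R)^2\sim R^{\,n-1}$ and $\int(\partial_j\phi_R)^2\sim R^{\,n-3}$ for $j\ge 2$; hence for $R$ large the bracket is $\approx(S_q)_{11}\int(\partial_1\phi_R)^2>0$. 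Fixing such an $R$ and then a small $\epsilon$ makes $\supp(f_{R,\epsilon})\subset B(q,\delta)$ and the integral positive; call this function $f$. Since $f$ is a nonconstant smooth function on the closed manifold $M$, we have $\Delta f\not\equiv0$, and subtracting its mean (which alters neither $\nabla f$ nor $\Delta f$) we may assume $f\in C^\infty(M)\cap H$, and then $\Lambda_S(f)>0$.

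To finish, choose $k$ distinct points $q_1,\dots,q_k\in U$ and pairwise disjoint balls $B_i\subset U$ around them, and apply the construction in each to get $f_i\in C^\infty(M)\cap H$ with $\supp f_i\subset B_i$ and $\int_M S(\nabla f_i,\nabla f_i)\,d\mu_g>0$. Put $V_k=\Span\{f_1,\dots,f_k\}$. If $\sum_i c_if_i=0$, applying $\Delta$ gives $\sum_i c_i\,\Delta f_i=0$ with the $\Delta f_i$ supported in disjoint sets, so $c_i\Delta f_i=0$ and hence $c_i=0$ for each $i$; thus $\dim V_k=k$. For $v=\sum_i c_if_i\in V_k\setminus\{0\}$, disjointness of supports kills all cross terms, so $\int_M S(\nabla v,\nabla v)\,d\mu_g=\sum_i c_i^2\int_M S(\nabla f_i,\nabla f_i)\,d\mu_g>0$ and $\|\Delta v\|_{L^2}^2=\sum_i c_i^2\|\Delta f_i\|_{L^2}^2>0$, whence $\Lambda_S(v)>0$.

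I expect the main obstacle to be the local construction, and specifically the point that positivity of $S$ in a \emph{single} direction at a \emph{single} point suffices: the other eigenvalues of $S_q$ may be arbitrarily negative, so an isotropic bump need not work, and one must use the anisotropic rescaling $\beta(y^j/R)$ so that $f$ varies much faster along the good direction than transversally, forcing the positive term to dominate. Everything else — disjoint supports giving $L^2$-orthogonality of both $\{\nabla f_i\}$ (in the $S$-pairing) and $\{\Delta f_i\}$, hence positivity of $\Lambda_S$ on the whole span, together with linear independence — is routine.
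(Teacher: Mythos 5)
Your proof is correct, and it reaches the conclusion by a genuinely different mechanism than the paper, although both rest on the same underlying idea: force the gradient of the test functions to point predominantly along the one direction in which $S$ is known to be positive. The paper works inside a \emph{single} coordinate window: with one fixed cutoff $\psi$ it takes the $k$ functions $\psi\sin(mNx^1)$ with consecutive large frequencies $m=K+1,\dots,K+k$, so that the good term, of size (frequency)$^2$ times $S(\nabla x^1,\nabla x^1)\ge\delta$, beats the cutoff cross terms, which are only linear in the frequency; positivity on the whole span then comes from the $L^2$-orthogonality of $\cos(iNx^1)$ and $\cos(jNx^1)$ over the period, and requires the explicit quantitative bookkeeping of the appendix. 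You instead build $k$ \emph{disjointly supported} test functions, one in each of $k$ small balls, and make each individually positive by an anisotropic rescaling of a product bump (fast variation along the good axis, slow variation transversally, so the $(S_q)_{11}$ term of order $R^{n-1}$ dominates the possibly negative transverse contributions of order $R^{n-3}$, the off-diagonal integrals vanishing since $\int\alpha'\alpha=\int\beta'\beta=0$). Disjointness of supports then makes the passage from one positive function to a $k$-dimensional positive subspace trivial: all cross terms in both the numerator and the denominator of $\Lambda_S$ vanish, with no frequency orthogonality or comparison of constants needed; the price is the two-parameter limit (first $R$ large, then $\epsilon$ small) and the scaling computation. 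One small wording point: after you subtract the mean to land in $H$, the functions $f_i$ are no longer supported in $B_i$; but your argument only uses that $\nabla f_i$ and $\Delta f_i$ are supported in $B_i$, which is unaffected by adding constants, so the linear-independence step via $\Delta$ and the vanishing of the cross terms go through exactly as written (the paper handles the same issue by taking the image of the compactly supported span under the mean-subtraction map).
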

Putting $k=1$ in Lemma \ref{subsp}, we get Lemma \ref{one}.
See Appendix for the proof of Lemma \ref{subsp}.

\subsection{The definition and elementary properties of $\Lambda_k(S)$}
Let $(M,g)$ be a closed Riemannian manifold of dimension $n$ and $S$ a symmetric tensor of type $(0,2)$ on $M$.
We define $\Lambda_1(S)\geq \Lambda_2(S)\geq \cdots$.
\begin{Def}\label{def1}
We put $\Lambda_1(S)=\sup_{v\in H\backslash \{0\} } \Lambda_S (v)$.
If $\Lambda_1(S)=0$, we define $\Lambda_1(S)=\Lambda_2(S)=\Lambda_3(S)=\cdots=0$.
If $\Lambda_1(S)>0$, we can take a function $v_1\in H$ such that $\Lambda_S(v_1)=\Lambda_1(S)$ and $\|\Delta v_1\|_{L^2}=1$ by Lemma \ref{fund1}.
We define $\Lambda_k(S)$ and $v_k$ inductively as follows.
Suppose that we have chosen $\Lambda_1(S),\cdots,\Lambda_k(S)>0$ and $v_1,\cdots,v_k\in H$.
We put 
\begin{equation*}
\Lambda_{k+1}(S)=\sup \left\{\Lambda_S (v):v\in\langle v_1,\cdots,v_k\rangle^{\perp}\backslash \{0\}\right\},
\end{equation*}
where $\langle v_1,\cdots,v_k\rangle^{\perp}$ denotes the orthogonal complement of $\Span_{\mathbb{R}} \{v_1,\cdots,v_k\}$ in $H$.
We have $\Lambda_{k+1}(S)>0$ by Lemma \ref{lem} and Lemma \ref{subsp};
therefore, we can take $v_{k+1} \in \langle v_1,\cdots,v_k\rangle^{\perp}\backslash \{0\}$ such that $\Lambda_{k+1}(S) = \Lambda_S(v_{k+1})$ and $\|\Delta v_{k+1}\|_{L^2}=1$ by Lemma \ref{fund1}.
Inductively, we define $\Lambda_1(S)\geq \Lambda_2(S)\geq \cdots >0$ and $v_1,v_2,\cdots \in H$.
We call $v_k$ the {\bf associated function} to $\Lambda_k(S)$.
\end{Def}
We can prove each $v_k$ is smooth. In fact, we show the following:
\begin{Prop}\label{fund3}
If $\Lambda_1(S)>0$, then we have $\Delta^2 v_i=\frac{1}{\Lambda_i(S)}\nabla^\ast (S(\nabla v_i,\cdot))$ for each $i=1,2,\ldots$.
\end{Prop}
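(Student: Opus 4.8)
The plan is to treat this as a constrained-maximization problem and to imitate the min--max derivation of the eigenvalue equations: first write down the Euler--Lagrange equation for each $v_i$ on its constraint subspace, and then show that the Lagrange multipliers attached to the orthogonality constraints all vanish.

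For each index $m$ put $F_m=\langle v_1,\dots,v_{m-1}\rangle^{\perp}$, so that $F_1=H$ and $\Lambda_m(S)=\Lambda_S(v_m)=\sup_{v\in F_m\backslash\{0\}}\Lambda_S(v)$. Since $F_m$ is a linear subspace, $v_m+tv\in F_m$ for every $v\in F_m$ and all small $t$, so the first-variation computation in the proof of Lemma \ref{fund2} applies verbatim and gives
\[
\Lambda_m(S)\int_M(\Delta v_m,\Delta v)\,d\mu_g=\int_M S(\nabla v_m,\nabla v)\,d\mu_g\qquad\text{for all }v\in F_m.
\]
Using that $\Delta$ is self-adjoint and that $\nabla^\ast$ is the adjoint of $\nabla$, this says precisely that the element
\[
w_m:=\Lambda_m(S)\,\Delta^2 v_m-\nabla^\ast(S(\nabla v_m,\cdot))\in L^2_{-2}(M)
\]
pairs to zero with every element of $F_m$ under the non-degenerate pairing $L^2_{-2}(M)\times L^2_2(M)\to\mathbb{R}$.

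The crux is to show that $w_i$ in fact pairs to zero with $v_1,\dots,v_{i-1}$ as well; since $H=F_i\oplus\Span_{\mathbb{R}}\{v_1,\dots,v_{i-1}\}$, this forces $w_i$ to annihilate all of $H$. Fix $j<i$. Expanding the pairing,
\[
\langle w_i,v_j\rangle=\Lambda_i(S)\int_M(\Delta v_i,\Delta v_j)\,d\mu_g-\int_M S(\nabla v_i,\nabla v_j)\,d\mu_g,
\]
and the first term equals $\Lambda_i(S)\langle v_i,v_j\rangle_H=0$ because $v_i\in F_i$ while $v_j\in\{v_1,\dots,v_{i-1}\}$. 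For the second term, note that the list defining $F_i$ contains the list defining $F_j$, hence $F_i\subseteq F_j$ and in particular $v_i\in F_j$; feeding $v=v_i$ into the first-variation identity for $v_j$ and using $\int_M(\Delta v_j,\Delta v_i)\,d\mu_g=\langle v_j,v_i\rangle_H=0$ gives $\int_M S(\nabla v_i,\nabla v_j)\,d\mu_g=0$. Hence $\langle w_i,v_j\rangle=0$ for every $j<i$, and therefore $\langle w_i,v\rangle=0$ for all $v\in H$. Finally $\langle w_i,1\rangle=\Lambda_i(S)\int_M\Delta(\Delta v_i)\,d\mu_g-\int_M(S(\nabla v_i,\cdot),\nabla 1)\,d\mu_g=0$ since the integral of a Laplacian vanishes and $\nabla 1=0$; as every element of $L^2_2(M)$ is the sum of an element of $H$ and a constant, $w_i$ annihilates all of $L^2_2(M)$, and non-degeneracy of the pairing forces $w_i=0$, i.e. $\Delta^2 v_i=\frac{1}{\Lambda_i(S)}\nabla^\ast(S(\nabla v_i,\cdot))$. (For $i=1$ this is just Lemma \ref{fund2}.) Smoothness of $v_i$ then follows by a routine elliptic bootstrap: if $v_i\in L^2_k$ then $\nabla^\ast(S(\nabla v_i,\cdot))\in L^2_{k-2}$, so $\Delta^2 v_i\in L^2_{k-2}$ and elliptic regularity for $\Delta^2$ gives $v_i\in L^2_{k+2}$; iterating and invoking the Sobolev embedding theorem yields $v_i\in C^\infty(M)$.

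I expect the only genuinely delicate point to be the bookkeeping in the middle step: keeping track that the admissible variations are exactly those in $F_i$, applying the inclusions $F_i\subseteq F_j$ in the correct direction, and disposing of the constant functions at the end. The structural heart of the proof is the vanishing of the off-diagonal quantities $\int_M S(\nabla v_i,\nabla v_j)\,d\mu_g$ for $i\neq j$, which is exactly what makes the Lagrange multipliers disappear; nothing analytically hard is involved beyond the elliptic estimate already used in Lemma \ref{fund1} and the standard regularity bootstrap.
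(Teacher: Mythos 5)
Your proof is correct, and its overall strategy coincides with the paper's: derive the weak Euler--Lagrange identity for $v_i$ against variations in $F_i=\langle v_1,\dots,v_{i-1}\rangle^{\perp}$, then show that the defect $w_i=\Lambda_i(S)\Delta^2 v_i-\nabla^\ast(S(\nabla v_i,\cdot))$ also annihilates $v_1,\dots,v_{i-1}$ (and the constants), which reduces to the off-diagonal vanishing $\int_M S(\nabla v_i,\nabla v_j)\,d\mu_g=0$ for $j<i$. The one genuine difference is how that vanishing is obtained: the paper argues by induction on $i$ and invokes Lemma \ref{3lem}, which needs the already-established strong equation $\Lambda_j(S)\Delta^2 v_j=\nabla^\ast(S(\nabla v_j,\cdot))$ for the earlier indices, whereas you use only the weak first-variation identity for $v_j$ on $F_j$, tested against $v_i\in F_i\subseteq F_j$, together with $\langle v_i,v_j\rangle_H=0$. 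Your variant thus dispenses with the induction and with Lemma \ref{3lem} entirely, which is slightly more economical here; what the paper's route buys is that the orthogonality bookkeeping is isolated in Lemma \ref{3lem}, a statement it reuses later (in Proposition \ref{lambda0} and Proposition \ref{orth}), so nothing is lost by proving it first. Your explicit handling of the constant functions and the elliptic bootstrap for smoothness fill in details the paper leaves implicit (smoothness is only asserted via Lemma \ref{fund2}); both are correct as written.
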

To prove Proposition \ref{fund3}, we show the following lemma.
\begin{Lem}\label{3lem}
Take a real number $\Lambda\in \mathbb{R}$ and a function $v\in H$.
Suppose that $\Lambda \Delta^2 v=\nabla^\ast (S(\nabla v,\cdot))$ holds.
Then, we have
\begin{equation*}
\int_M S(\nabla v,\nabla u) \,d\mu_g=0
\end{equation*}
for any function $u\in H$ with $\langle u, v\rangle_H$=0.
\end{Lem}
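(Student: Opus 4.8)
The plan is to obtain the conclusion by simply pairing the given fourth-order equation against $u$ and reading off the result from the orthogonality hypothesis $\langle u,v\rangle_H=0$; the proof is essentially a one-line integration by parts, so the work is in the bookkeeping rather than in any genuine idea.

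First I would interpret the hypothesis $\Lambda\Delta^2 v=\nabla^\ast(S(\nabla v,\cdot))$ as an identity of distributions. Since $v\in H\subset L^2_2(M)$, the tensor $S(\nabla v,\cdot)$ lies in $L^2(M)$, so $\nabla^\ast(S(\nabla v,\cdot))\in L^2_{-1}(M)$, while $\Delta^2 v\in L^2_{-2}(M)$; the equation is understood via the non-degenerate pairing with $L^2_2(M)$, exactly as in the proof of Lemma \ref{fund2}. In particular it may be tested against any $w\in L^2_2(M)$, hence against our $u\in H$.

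Next I would test both sides against $u$. Using the definition of the formal adjoint $\nabla^\ast$ on the right-hand side gives
\begin{equation*}
\int_M (\nabla^\ast(S(\nabla v,\cdot)),u)\,d\mu_g=\int_M S(\nabla v,\nabla u)\,d\mu_g,
\end{equation*}
while on the left-hand side, integrating by parts twice (legitimate since $u,v\in L^2_2(M)$ and $\Delta$ is self-adjoint) gives
\begin{equation*}
\int_M (\Lambda\Delta^2 v,u)\,d\mu_g=\Lambda\int_M (\Delta v,\Delta u)\,d\mu_g=\Lambda\langle v,u\rangle_H.
\end{equation*}
Equating the two expressions and invoking $\langle u,v\rangle_H=0$ yields $\int_M S(\nabla v,\nabla u)\,d\mu_g=0$, which is the claim.

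There is no real obstacle here: the only point requiring care is the functional-analytic bookkeeping—verifying that each object lies in the Sobolev space claimed and that the pairings and integrations by parts are justified at this regularity—and this is already implicit in the proof of Lemma \ref{fund2}. I would also note that the value of $\Lambda$ plays no role in the argument, so the statement holds for every $\Lambda\in\mathbb{R}$ satisfying the given equation.
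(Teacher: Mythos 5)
Your proposal is correct and follows essentially the same route as the paper: pair the equation $\Lambda\Delta^2 v=\nabla^\ast(S(\nabla v,\cdot))$ against $u$, integrate by parts on both sides to obtain $\int_M S(\nabla v,\nabla u)\,d\mu_g=\Lambda\int_M(\Delta v,\Delta u)\,d\mu_g$, and conclude from $\langle u,v\rangle_H=0$. The extra remarks on the distributional interpretation are harmless bookkeeping that the paper leaves implicit.
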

\begin{proof}
For any function $u\in H$ with $\langle u,v\rangle_H=0$, we have
\begin{align*}
\int_M S(\nabla v,\nabla u)\,d\mu_g
=&\int_M (\nabla^\ast (S(\nabla v,\cdot)),u)\,d\mu_g\\
=&\Lambda \int_M (\Delta^2 v,u) \,d\mu_g\\
=&\Lambda \int_M (\Delta v,\Delta u) \,d\mu_g=0.
\end{align*}
\end{proof}
\begin{proof}[Proof of Proposition \ref{fund3}]
We show the proposition by induction on $i$.
We have shown the proposition for $i=1$ in Lemma \ref{fund1}.
Suppose that we have shown the proposition for $i=1,\ldots,k$.
Then, for any $v\in \langle v_1,\ldots,v_{k}\rangle^\perp$, similarly to Lemma \ref{fund2}, we have
\begin{equation}\label{eqk}
\int_M \Bigl(\Delta^2 v_{k+1}-\frac{1}{\Lambda_{k+1}(S)}\nabla^\ast (S(\nabla v_{k+1},\cdot)),v\Bigr) \,d\mu_g=0.
\end{equation}
For each $v_i \,(i=1,\cdots,k)$, we have $\langle v_{k+1},v_{i}\rangle_H=0$.
Therefore, by Lemma \ref{3lem},
\begin{equation*}
\int_M \Bigl(\nabla^\ast (S(\nabla v_{k+1},\cdot)),v_i\Bigr)\,d\mu_g=0.
\end{equation*}
Thus,
\begin{equation*}
\int_M \Bigl(\Delta^2 v_{k+1}-\frac{1}{\Lambda_{k+1}(S)}\nabla^\ast (S(\nabla v_{k+1},\cdot)),v_i\Bigr) \,d\mu_g=0.
\end{equation*}
Consequently, (\ref{eqk}) holds for any $v\in L^2_2(M)$. This implies the proposition for $i=k+1$.  
\end{proof}

We introduce some elementary properties of $\Lambda_k(S)$.
\begin{Prop}\label{lambda0}
For any symmetric tensor $S$ of type $(0,2)$, we have $\lim_{k\to \infty} \Lambda_k(S)=0$.
\end{Prop}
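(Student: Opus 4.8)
The plan is to first dispose of the trivial case: if $\Lambda_1(S)=0$, then by Definition \ref{def1} we have $\Lambda_k(S)=0$ for all $k$ and there is nothing to prove. So I would assume $\Lambda_1(S)>0$ and work with the associated functions $v_1,v_2,\ldots\in H$ from Definition \ref{def1}, normalized by $\|\Delta v_k\|_{L^2}=1$ and with $\langle v_{k+1},v_i\rangle_H=0$ for all $i\le k$. Thus $\{v_k\}$ is an orthonormal system in the Hilbert space $H$, and hence $v_k\rightharpoonup 0$ weakly in $H$.

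The key analytic input is that the inclusion $H\hookrightarrow L^2_1(M)$ is compact. This follows from the Rellich--Kondrachov theorem on the closed manifold $M$ together with the fact (noted just after the first Notation) that the $H$-norm is equivalent to the $L^2_2$-norm on $H$. Since a compact operator carries a weakly convergent sequence to a strongly convergent one, the weak convergence $v_k\rightharpoonup 0$ in $H$ gives strong convergence $v_k\to 0$ in $L^2_1(M)$; in particular $\|\nabla v_k\|_{L^2}\to 0$ as $k\to\infty$.

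It then remains to estimate $\Lambda_k(S)=\Lambda_S(v_k)=\int_M S(\nabla v_k,\nabla v_k)\,d\mu_g$. As in the proof of Lemma \ref{lem}, compactness of $M$ yields a constant $c>0$ with $-cg\le S\le cg$, so $|S(\nabla v_k,\nabla v_k)|\le c\,|\nabla v_k|^2$ pointwise and therefore $|\Lambda_k(S)|\le c\,\|\nabla v_k\|_{L^2}^2\to 0$. This proves $\lim_{k\to\infty}\Lambda_k(S)=0$.

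The argument is short, and I do not anticipate a serious obstacle; the one point deserving care is that one should \emph{not} expect strong convergence of $\{v_k\}$ in $H$ itself (an orthonormal sequence never converges strongly), but only in the weaker space $L^2_1(M)$ — which is precisely the norm that controls $\int_M S(\nabla v_k,\nabla v_k)\,d\mu_g$, so invoking the compact embedding correctly is all that is needed.
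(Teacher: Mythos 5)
Your proof is correct, and it takes a genuinely different route from the paper. The paper does not look at the sequence $\{v_k\}$ analytically at all: it sets $\Lambda=\lim_k\Lambda_k(S)$, shows via Lemma \ref{3lem} that $\Lambda_S(v)\geq\Lambda$ for every $v\in H^+(S)\backslash\{0\}$ (the cross terms in $\int_M S(\nabla v,\nabla v)\,d\mu_g$ vanish), hence $\sup_{H^+(S)\backslash\{0\}}\Lambda_{-S}\leq-\Lambda$, and then invokes Lemma \ref{lem} on the infinite-dimensional closed subspace $H^+(S)$ with the tensor $-S$ to force $-\Lambda\geq 0$, so $\Lambda=0$. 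You instead exploit that the associated functions are orthonormal in $H$, so $v_k\rightharpoonup 0$ weakly, and the compact embedding $H\hookrightarrow L^2_1(M)$ (Rellich--Kondrachov plus the equivalence of the $H$-norm with the $L^2_2$-norm) upgrades this to $\|\nabla v_k\|_{L^2}\to 0$, whence $|\Lambda_k(S)|=\bigl|\int_M S(\nabla v_k,\nabla v_k)\,d\mu_g\bigr|\leq c\|\nabla v_k\|_{L^2}^2\to 0$. Both arguments ultimately rest on compactness — in the paper it enters through $\lambda_k\to\infty$ inside Lemma \ref{lem} — but yours is more direct and self-contained, needing neither Lemma \ref{3lem} nor the space $H^+(S)$, while the paper's route reuses machinery ($H^\pm(S)$, Lemma \ref{lem}) that it needs anyway for Proposition \ref{orth} and later results. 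Your closing caveat is also the right one: strong convergence in $H$ is impossible for an orthonormal sequence, and only the $L^2_1$ convergence is needed to control the numerator, since the denominator is normalized to $1$.
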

\begin{proof}
We assume $\Lambda_1(S)>0$ (otherwise the lemma is trivial).

We put $\Lambda =\lim_{k\to \infty} \Lambda_k(S)$.
Let $H^+(S)=\overline{\bigoplus_{k=1}^{\infty} \mathbb{R}v_k}$ be the closure of $\bigoplus_{k=1}^{\infty} \mathbb{R}v_k$ in $H$.
For any $v\in H^+(S)\backslash \{0\}$, we have 
\begin{equation*}
v=\lim_{l\to \infty}\sum_{k=1}^l \langle v,v_k\rangle v_k\quad (\text{strongly in }H).
\end{equation*}
Therefore, by Lemma \ref{3lem}
\begin{equation*}
\begin{split}
\Lambda_S(v)&=\frac{\int_M S(\nabla v,\nabla v)\,d\mu_g}{\|\Delta v\|^2_{L^2}}\\
&=\frac{\sum \langle v,v_k\rangle^2\int_M S(\nabla v_k,\nabla v_k)\,d\mu_g}{\sum \langle v,v_k\rangle^2\|\Delta v_k\|^2_{L^2}}\\
&\geq \frac{\sum \langle v,v_k\rangle^2 \Lambda \|\Delta v_k\|^2_{L^2}}{\sum \langle v,v_k\rangle^2\|\Delta v_k\|^2_{L^2}}
= \Lambda.
\end{split}
\end{equation*}
Then, for any $v\in H^+(S)\backslash \{0\}$, we have $\Lambda_{-S}(v)\leq -\Lambda$, and so $\sup_{v\in H^+(S)\backslash \{0\} } \Lambda_{-S} (v) \leq -\Lambda$.
By Lemma \ref{lem}, we have $\sup_{v\in H^+(S)\backslash \{0\} } \Lambda_{-S} (v)\geq 0$; therefore, $-\Lambda \geq 0$.
However, we have $\Lambda\geq0$.
Thus, $\Lambda=0$ holds.
\end{proof}
\begin{Lem}\label{perp}
Take real numbers $\Lambda,\overline{\Lambda}\in\mathbb{R}$ and functions $v,\overline{v} \in H$.
If $\Lambda\neq\overline{\Lambda}$ and
\begin{align*}
\Lambda\Delta^2 v&=\nabla^\ast (S(\nabla v,\cdot))\\
\overline{\Lambda}\Delta^2\overline{v}&=\nabla^\ast (S(\nabla \overline{v},\cdot)),
\end{align*}
then $\langle v,\overline{v}\rangle_H=0$.
\end{Lem}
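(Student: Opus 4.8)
\emph{Proof proposal.} The plan is to compute the quantity $\int_M S(\nabla v,\nabla\overline v)\,d\mu_g$ in two different ways and compare. This is precisely the classical argument that eigenfunctions of a self-adjoint operator belonging to distinct eigenvalues are orthogonal, transposed here to the symmetric bilinear form $(u,w)\mapsto\int_M S(\nabla u,\nabla w)\,d\mu_g$ relative to the inner product $\langle\cdot,\cdot\rangle_H$.

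First I would use the first hypothesis $\Lambda\Delta^2 v=\nabla^\ast(S(\nabla v,\cdot))$. Pairing both sides against $\overline v$ and integrating by parts gives
\begin{equation*}
\int_M S(\nabla v,\nabla\overline v)\,d\mu_g
=\int_M\bigl(\nabla^\ast(S(\nabla v,\cdot)),\overline v\bigr)\,d\mu_g
=\Lambda\int_M(\Delta^2 v,\overline v)\,d\mu_g
=\Lambda\int_M(\Delta v,\Delta\overline v)\,d\mu_g
=\Lambda\langle v,\overline v\rangle_H .
\end{equation*}
Since $S$ is a symmetric $(0,2)$-tensor, $\int_M S(\nabla v,\nabla\overline v)\,d\mu_g=\int_M S(\nabla\overline v,\nabla v)\,d\mu_g$; applying the same computation to the second hypothesis $\overline\Lambda\Delta^2\overline v=\nabla^\ast(S(\nabla\overline v,\cdot))$ yields $\int_M S(\nabla\overline v,\nabla v)\,d\mu_g=\overline\Lambda\langle\overline v,v\rangle_H=\overline\Lambda\langle v,\overline v\rangle_H$. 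Subtracting the two expressions gives $(\Lambda-\overline\Lambda)\langle v,\overline v\rangle_H=0$, and since $\Lambda\neq\overline\Lambda$ we conclude $\langle v,\overline v\rangle_H=0$.

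The only point that needs a word of justification — and it is entirely routine — is that the integration by parts and the two duality pairings are legitimate at the regularity level $v,\overline v\in L^2_2(M)$: here $S(\nabla v,\cdot)$ is an $L^2_1$ one-form, $\nabla^\ast(S(\nabla v,\cdot))\in L^2(M)$, and $\Delta^2 v\in L^2_{-2}(M)$ paired against $\overline v\in L^2_2(M)$, so every step follows by approximating $v,\overline v$ in $L^2_2(M)$ by smooth functions and using continuity of the bilinear pairings; no regularity beyond membership in $H$ is required (though Proposition \ref{fund3} together with elliptic regularity gives smoothness when $\Lambda,\overline\Lambda\neq0$). Thus there is no real obstacle, and the entire content is the two-way computation above.
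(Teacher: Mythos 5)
Your proposal is correct and is essentially the same argument as the paper's: both compute $\int_M S(\nabla v,\nabla\overline v)\,d\mu_g$ (equivalently, pass through the self-adjointness of $v\mapsto\nabla^\ast(S(\nabla v,\cdot))$) to obtain $\Lambda\langle v,\overline v\rangle_H=\overline\Lambda\langle v,\overline v\rangle_H$ and conclude from $\Lambda\neq\overline\Lambda$. Your added remark on the legitimacy of the pairings at the $L^2_2$ level is a harmless refinement the paper leaves implicit.
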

\begin{proof}
\begin{align*}
\Lambda \int_M (\Delta v,\Delta \overline{v})\,d\mu_g
=&\int_M \left(\nabla^\ast (S(\nabla v,\cdot)),\overline{v}\right)\,d\mu_g\\
=&\int_M \left(v,\nabla^\ast (S(\nabla \overline{v},\cdot))\right)\,d\mu_g\\
=&\overline{\Lambda} \int_M (\Delta v,\Delta \overline{v})\,d\mu_g.
\end{align*}
Thus, we get $\int_M (\Delta v,\Delta \overline{v})\,d\mu_g=0$.
\end{proof}
We define $\Lambda_{-i}(S)=-\Lambda_i(-S)$.
If $-\Lambda_{-1}(S)>0$, we define $v_{-i}$ to be the associated function to $\Lambda_{-i}(S)$, i.e., $v_{-1},v_{-2},\cdots$ are orthonormal in $H$, and $\Delta^2 v_{-i}=\frac{1}{\Lambda_{-i}(S)}\nabla^\ast (S(\nabla v_{-i},\cdot))$ holds.
We define 
\begin{enumerate}
\item[]$H^+(S)=\overline{\bigoplus_{k=1}^{\infty} \mathbb{R}v_k}$,
\item[]$H^-(S)=H^+(-S)=\overline{\bigoplus_{k=1}^{\infty} \mathbb{R}v_{-k}}$,
\end{enumerate}
where the overline means the closure in $H$ (if $\Lambda_1(S)=0$, then we define $H^+(S)=\{0\}$).
Then, $H^+(S)$ and $H^-(S)$ are orthogonal to each other in $H$ by Lemma \ref{perp}.
Let $H^0(S)$ be the orthogonal complement of $H^+(S)\oplus H^-(S)$ in $H$.
We can characterize the element of $H^0(S)$ by the following lemma.
\begin{Prop}\label{orth}
For each $v\in H$ the following two conditions are equivalent:
\begin{itemize}
\item[(a)] $v\in H^0(S)$,
\item[(b)] $\nabla^\ast (S(\nabla v,\cdot))=0$.
\end{itemize}
\end{Prop}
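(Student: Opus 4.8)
The plan is to prove the two implications separately; $(b)\Rightarrow(a)$ is a short integration-by-parts argument, while $(a)\Rightarrow(b)$ carries the real content. If $H^0(S)=\{0\}$ or $S\equiv 0$ everything is trivial, so one may tacitly assume otherwise.

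For $(b)\Rightarrow(a)$: assume $\nabla^\ast(S(\nabla v,\cdot))=0$ and fix an associated function $v_k$ of $\Lambda_k(S)$. Using Proposition \ref{fund3} in the form $\Lambda_k(S)\Delta^2 v_k=\nabla^\ast(S(\nabla v_k,\cdot))$ together with the symmetry of $S$, one gets $\Lambda_k(S)\langle v,v_k\rangle_H=\int_M S(\nabla v_k,\nabla v)\,d\mu_g=\int_M S(\nabla v,\nabla v_k)\,d\mu_g=\int_M(\nabla^\ast(S(\nabla v,\cdot)),v_k)\,d\mu_g=0$, hence $\langle v,v_k\rangle_H=0$ because $\Lambda_k(S)>0$. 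Thus $v\perp H^+(S)$. Applying the same reasoning to $-S$---for which $\nabla^\ast((-S)(\nabla v,\cdot))=0$ still holds and $H^+(-S)=H^-(S)$---gives $v\perp H^-(S)$, so $v\in H^0(S)$.

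For $(a)\Rightarrow(b)$: the first step is to show that $\Lambda_S$ vanishes identically on $H^0(S)$. I claim $\sup_{w\in H^0(S)\backslash\{0\}}\Lambda_S(w)\le 0$. If not, this supremum is some $\Lambda>0$, and since $H^0(S)$ is a closed subspace, Lemma \ref{fund1} produces $u\in H^0(S)\backslash\{0\}$ with $\Lambda_S(u)=\Lambda$; in particular $\int_M S(\nabla u,\nabla u)\,d\mu_g>0$, so $S\le 0$ fails, whence (Lemma \ref{lem} and Lemma \ref{subsp}) all $\Lambda_k(S)>0$ and $\Lambda_k(S)\to 0$ by Proposition \ref{lambda0}. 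But $H^0(S)$ is $H$-orthogonal to each $v_j$, so $H^0(S)\subset\langle v_1,\dots,v_{k-1}\rangle^\perp$, and hence $\Lambda_k(S)\ge\Lambda_S(u)=\Lambda$ for every $k$---contradicting $\Lambda_k(S)\to 0$. The identical argument applied to $-S$ (note $H^0(-S)=H^0(S)$) gives $\sup_{w\in H^0(S)\backslash\{0\}}\Lambda_{-S}(w)\le 0$, so $\Lambda_S\equiv 0$ on $H^0(S)$. Polarizing the now identically zero quadratic form $w\mapsto\int_M S(\nabla w,\nabla w)\,d\mu_g$ on $H^0(S)$ yields $\int_M S(\nabla v,\nabla w)\,d\mu_g=0$ for all $w\in H^0(S)$.

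It remains to upgrade this to all of $H$. As in the second paragraph, Proposition \ref{fund3} gives $\int_M S(\nabla v,\nabla v_k)\,d\mu_g=\Lambda_k(S)\langle v,v_k\rangle_H=0$ for each $k$ (using $v\in H^0(S)\perp v_k$), and similarly $\int_M S(\nabla v,\nabla v_{-k})\,d\mu_g=\Lambda_{-k}(S)\langle v,v_{-k}\rangle_H=0$. Since $w\mapsto\int_M S(\nabla v,\nabla w)\,d\mu_g$ is continuous on $H$ (by the elliptic estimate) and $H=H^+(S)\oplus H^-(S)\oplus H^0(S)$, it vanishes on all of $H$; that is, $\nabla^\ast(S(\nabla v,\cdot))\in L^2(M)$ is $L^2$-orthogonal to $H$. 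Since the $L^2$-closure of $H$ is the space of mean-zero $L^2$ functions, $\nabla^\ast(S(\nabla v,\cdot))$ must be a constant, and its integral over $M$ equals $\int_M S(\nabla v,\nabla 1)\,d\mu_g=0$, so it is $0$. (The degenerate case $S\equiv 0$ is subsumed, since then $H^0(S)=H$ and both (a) and (b) hold for every $v$; it can also be checked directly.) The step I expect to be the main obstacle is precisely this passage from the vanishing of $\Lambda_S$ on $H^0(S)$ to $\nabla^\ast(S(\nabla v,\cdot))=0$: polarization by itself only gives orthogonality within $H^0(S)$, and the extra leverage comes from the fact that $v\in H^0(S)$ is moreover $H$-orthogonal to every $v_k$ and $v_{-k}$, which via Proposition \ref{fund3} forces $w\mapsto\int_M S(\nabla v,\nabla w)\,d\mu_g$ to vanish on those directions as well; the $L^2$-density argument together with the mean-zero bookkeeping then closes the proof.
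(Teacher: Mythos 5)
Your proof is correct and follows essentially the same route as the paper: show $\Lambda_S$ vanishes on $H^0(S)$ using $\Lambda_k(S)\to 0$, polarize, kill the $v_{\pm k}$ directions via the Euler--Lagrange equations (the paper's Lemma \ref{3lem}/Lemma \ref{perp}), and conclude by density; the converse is the paper's Lemma \ref{perp} argument rewritten by hand. The only cosmetic differences are your detour through a maximizer from Lemma \ref{fund1} (the paper just bounds $\Lambda_S(w)\leq\Lambda_{k+1}(S)$ directly for each $w\in H^0(S)$) and your explicit mean-zero/constant bookkeeping at the end, which spells out a step the paper leaves implicit.
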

\begin{proof}
We first prove (a)$\Rightarrow$(b).
Take $v\in H^0(S)$.
We assume $v\neq0$ (otherwise the lemma is trivial).
For any $k=1,2,\ldots$, we have $v \in \langle v_1,\cdots,v_k\rangle^\perp$.
Thus, $\Lambda_S(v)\leq \Lambda_{k+1}(S)$.
Taking the limit, we get $\Lambda_S(v) \leq 0$ by Proposition \ref{lambda0}.
Similarly, we have $\Lambda_{-S}(v)\leq 0$; therefore, $\Lambda_S(v)=0$.
Thus, for any $w\in H^0(S)$ and $t\in \mathbb{R}$, we have $\int_M S(\nabla (v+tw),\nabla (v+tw))\,d\mu_g=0$.
Therefore, we get
\begin{equation}\label{eq0}
\int_M S(\nabla v,\nabla w)\,d\mu_g=0.
\end{equation}

By Lemma \ref{3lem}, for any $k=1,2,\ldots$,
\begin{equation*}
\int_M S(\nabla v,\nabla v_k)\,d\mu_g=\int_M S(\nabla v,\nabla v_{-k})\,d\mu_g=0.
\end{equation*}
This implies that the equation (\ref{eq0}) holds for all $w \in H$.
Therefore, we get $\nabla^\ast (S(\nabla v,\cdot))=0$.

We next prove (b) $\Rightarrow$ (a).
Suppose $\nabla^\ast (S(\nabla v,\cdot))=0$.
Then, $v$ is orthogonal to $H^+(S)$ and $H^-(S)$ by Lemma \ref{perp}.
Thus, we get (a).
\end{proof}

Next, we characterize $v_k$.
\begin{Lem}\label{vkch}
Take $u\in H\backslash\{0\}$. Suppose that there exists a constant $a\in \mathbb{R}$ such that $\Delta^2 u=a \nabla^\ast (S(\nabla u,\cdot))$.
Then, there exists a non-zero integer $k \in \mathbb{Z} \backslash \{0\}$ such that $a=\frac{1}{\Lambda_k(S)}$ and $u\in \Span_\mathbb{R}\{v_i:\Lambda_S(v_i)=\Lambda_k(S)\}$.
\end{Lem}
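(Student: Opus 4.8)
The plan is to recognize the hypothesis as the assertion that $u$ is an eigenvector, with eigenvalue $\Lambda:=1/a$, of the compact self-adjoint operator underlying the construction of the $v_k$, and then to run the standard argument that such an eigenvector lies in the span of the $v_i$ sharing its eigenvalue. First I would dispose of the degenerate case: if $a=0$ then $\Delta^2u=0$, and pairing this with $u$ through the non-degenerate pairing $L^2_{-2}(M)\times L^2_2(M)\to\mathbb{R}$ gives $\|\Delta u\|_{L^2}^2=\langle u,u\rangle_H=0$, forcing $u=0$, contrary to hypothesis. Hence $\Lambda:=1/a$ is a well-defined nonzero real number and the hypothesis reads $\Lambda\Delta^2u=\nabla^\ast(S(\nabla u,\cdot))$; since the right-hand side lies in $L^2(M)$, bootstrapping shows $u\in C^\infty(M)$, though the weak formulations below do not really require this.

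Next I would expand $u$ along the orthonormal system $\{v_k\}_{k\in\mathbb{Z}\setminus\{0\}}$. By construction $\{v_k\}_{k\geq1}$ is a complete orthonormal system of $H^+(S)$ and $\{v_{-k}\}_{k\geq1}$ one of $H^-(S)$; these spaces are orthogonal by Lemma \ref{perp}, and $H^0(S)$ is their orthogonal complement. So I can write $u=\sum_{k\neq0}c_kv_k+u^0$ with $c_k=\langle u,v_k\rangle_H$ and $u^0\in H^0(S)$, the series converging in $H$, and hence $\nabla u=\sum_{k\neq0}c_k\nabla v_k+\nabla u^0$ in $L^2(M)$ by the elliptic estimate.

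Then I would read off the coefficient relations by evaluating $\int_M(\nabla^\ast(S(\nabla u,\cdot)),w)\,d\mu_g$ for $w=v_j$ $(j\neq0)$ and for $w=u^0$ in two ways. Using the hypothesis, this integral equals $\Lambda\int_M(\Delta^2u,w)\,d\mu_g=\Lambda\langle u,w\rangle_H$, that is $\Lambda c_j$ for $w=v_j$ and $\Lambda\|u^0\|_H^2$ for $w=u^0$. Using instead $\int_M(\nabla^\ast(S(\nabla u,\cdot)),w)\,d\mu_g=\int_M S(\nabla u,\nabla w)\,d\mu_g$, expanding $\nabla u$, and invoking $\int_M S(\nabla v_k,\nabla v_j)\,d\mu_g=\Lambda_j(S)\delta_{jk}$ (which follows from $\nabla^\ast(S(\nabla v_j,\cdot))=\Lambda_j(S)\Delta^2v_j$, Lemma \ref{3lem}, and $\|\Delta v_j\|_{L^2}=1$) together with $\int_M S(\nabla u^0,\nabla v_j)\,d\mu_g=0$ and $\int_M S(\nabla u^0,\nabla u^0)\,d\mu_g=0$, both consequences of $\nabla^\ast(S(\nabla u^0,\cdot))=0$ from Proposition \ref{orth}, this integral equals $c_j\Lambda_j(S)$ for $w=v_j$ and $0$ for $w=u^0$. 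Comparing gives $c_j(\Lambda_j(S)-\Lambda)=0$ for all $j\neq0$ and $\Lambda\|u^0\|_H^2=0$; since $\Lambda\neq0$ we obtain $u^0=0$ and $c_j=0$ whenever $\Lambda_j(S)\neq\Lambda$, so $u=\sum_{\{j\neq0:\,\Lambda_j(S)=\Lambda\}}c_jv_j$.

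Finally, since $u\neq0$ the index set $\{j\neq0:\Lambda_j(S)=\Lambda\}$ is nonempty; choose $k$ in it. Because $\Lambda_j(S)>0$ for $j\geq1$ with $\Lambda_j(S)\to0$, and $\Lambda_j(S)<0$ for $j\leq-1$ with $\Lambda_j(S)\to0$ (Proposition \ref{lambda0} applied to $\pm S$), this index set is finite and all its members have the sign of $\Lambda$; in particular $k\neq0$, $a=1/\Lambda=1/\Lambda_k(S)$, and, since $\Lambda_S(v_i)=\Lambda_i(S)$ for every $i$, $u\in\Span_{\mathbb{R}}\{v_i:\Lambda_S(v_i)=\Lambda_k(S)\}$, as claimed. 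The steps I expect to demand the most care are the verification $a\neq0$ (which is what guarantees the $H^0(S)$-component of $u$ vanishes) and the sign/finiteness bookkeeping at the end; everything else is a direct application of Lemma \ref{3lem} and Proposition \ref{orth}. Equivalently, one may introduce the bounded compact self-adjoint operator $T\colon H\to H$ defined by $\langle Tv,w\rangle_H=\int_M S(\nabla v,\nabla w)\,d\mu_g$, observe that $Tv_k=\Lambda_k(S)v_k$, $Tu=\Lambda u$, and $\Ker T=H^0(S)$, and deduce the lemma from the spectral theorem for compact self-adjoint operators.
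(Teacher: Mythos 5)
Your proof is correct and follows essentially the same route as the paper: the paper simply notes $a\neq 0$ and invokes Lemma \ref{perp} (together with the decomposition $H=H^+(S)\oplus H^-(S)\oplus H^0(S)$) to see that $u$ is orthogonal to $H^0(S)$ and to every $v_i$ with $\frac{1}{\Lambda_i(S)}\neq a$, which is exactly what your coefficient computations $c_j(\Lambda_j(S)-\Lambda)=0$ and $\Lambda\|u^0\|_H^2=0$ re-derive in expanded form. Your added details (the expansion in the orthonormal system, the finiteness of $\{j:\Lambda_j(S)=\Lambda\}$ via Proposition \ref{lambda0}, and the compact self-adjoint operator reformulation) are sound but only make explicit what the paper leaves implicit.
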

\begin{proof}
Since $\Delta^2 u \neq 0$, we have $a \neq0$.
Then, $u$ is orthogonal to $H^0(S)$ and $v_i$ for $i\in \mathbb{Z}\backslash\{0\}$ such that $\frac{1}{\Lambda_i(S)}\neq a$ by Lemma \ref{perp}.
Therefore, there exists a non-zero integer $k\in \mathbb{Z}\backslash \{0\}$ such that $\frac{1}{\Lambda_k(S)}=a$ and $u\in \Span_\mathbb{R}\{v_i:\Lambda_S(v_i)=\Lambda_k(S)\}$.
\end{proof}
Lemma \ref{vkch} immediately implies the following two corollaries.
\begin{Cor}\label{koyuti1}
Suppose that $\{u_l\}_{l=1}^\infty$ is a complete orthonormal system in $H$, and, for each $l\in \mathbb{Z}_{>0}$, there exists $c_l\in \mathbb{R}$ such that $c_l \Delta^2 u_l=\nabla^\ast (S(\nabla u_l,\cdot))$. 
Then,
\begin{equation*}
H^+(S)=\overline{\bigoplus_{l\in \mathbb{Z}_{>0},c_l>0} \mathbb{R}u_l},\quad H^0(S)=\overline{\bigoplus_{l\in \mathbb{Z}_{>0},c_l=0} \mathbb{R}u_l},
\quad H^-(S)=\overline{\bigoplus_{l\in \mathbb{Z}_{>0},c_l<0} \mathbb{R}u_l}.
\end{equation*}
Moreover, if $\Lambda_1(S)>0$, we have
\begin{equation*}
\{\Lambda_k(S):k\in\mathbb{Z}_{>0}\}=\{c_l:l\in\mathbb{Z}_{>0}\text{ and }c_l>0\}.
\end{equation*}
\end{Cor}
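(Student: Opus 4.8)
The plan is to read off, from the sign of each scalar $c_l$, which of the three mutually orthogonal summands $H^+(S)$, $H^0(S)$, $H^-(S)$ contains the basis element $u_l$; this gives three inclusions of closed subspaces, which I would then promote to equalities by a soft orthogonality argument. The statement about the set $\{\Lambda_k(S)\}$ will fall out of the same classification together with Lemma \ref{perp}.

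For the classification, fix $l$. If $c_l=0$, the hypothesis reads $\nabla^\ast(S(\nabla u_l,\cdot))=0$, so $u_l\in H^0(S)$ by Proposition \ref{orth}. If $c_l\neq 0$, rewrite the hypothesis as $\Delta^2 u_l=\frac{1}{c_l}\nabla^\ast(S(\nabla u_l,\cdot))$; since $u_l\in H\setminus\{0\}$, Lemma \ref{vkch} applies and produces a nonzero integer $k$ with $c_l=\Lambda_k(S)$ and $u_l\in\Span_{\mathbb{R}}\{v_i:\Lambda_S(v_i)=\Lambda_k(S)\}$. The delicate point is to match the sign of $c_l$ with the positions of the indices $i$ that occur here. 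When $\Lambda_1(S)>0$ one has $\Lambda_j(S)>0$ for every $j>0$ (by Lemma \ref{lem} and Lemma \ref{subsp}, as in Definition \ref{def1}), whereas $\Lambda_j(S)=-\Lambda_{-j}(-S)\le 0$ for every $j<0$; since moreover $\Lambda_S(v_i)=\Lambda_i(S)$, we conclude that $c_l>0$ forces every relevant $v_i$ to have $i>0$, hence $u_l\in H^+(S)$, while $c_l<0$ forces $i<0$, hence $u_l\in H^-(S)$. (If $\Lambda_1(S)=0$ then $S\le 0$ by Lemma \ref{one}, so no $c_l$ is positive, and the same sign argument goes through since $\Lambda_j(S)=0$ for $j>0$.) Thus $\overline{\bigoplus_{c_l>0}\mathbb{R}u_l}\subseteq H^+(S)$, $\overline{\bigoplus_{c_l=0}\mathbb{R}u_l}\subseteq H^0(S)$, and $\overline{\bigoplus_{c_l<0}\mathbb{R}u_l}\subseteq H^-(S)$.

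To upgrade these inclusions to equalities, I note that since $\{u_l\}$ is a complete orthonormal system, the three subspaces on the left are mutually orthogonal with orthogonal sum equal to $H$; by Lemma \ref{perp} and the definition of $H^0(S)$, the same holds for $H^+(S),H^0(S),H^-(S)$. I then use the elementary fact that if $H=\bigoplus_i A_i=\bigoplus_i B_i$ are two orthogonal decompositions into the same finite number of summands with $A_i\subseteq B_i$ for each $i$, then $A_i=B_i$ for each $i$: given $b\in B_i$, write $b=\sum_j a_j$ with $a_j\in A_j$; then $b-a_i$ lies in $B_i$ and also in $\bigoplus_{j\neq i}A_j\subseteq\bigoplus_{j\neq i}B_j$, which is orthogonal to $B_i$, so $b=a_i\in A_i$. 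Applying this yields the three identities.

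Finally, assume $\Lambda_1(S)>0$. The inclusion $\{c_l:c_l>0\}\subseteq\{\Lambda_k(S):k\in\mathbb{Z}_{>0}\}$ is exactly the conclusion $c_l=\Lambda_k(S)$, $k>0$, obtained above. For the reverse inclusion, fix $k>0$ and suppose $\Lambda_k(S)\neq c_l$ for every $l$ with $c_l>0$; then $\frac{1}{\Lambda_k(S)}\neq\frac{1}{c_l}$ for all such $l$, so Lemma \ref{perp} applied to $v_k$ and $u_l$ gives $\langle v_k,u_l\rangle_H=0$ for all such $l$, whence $v_k$ is orthogonal to $\overline{\bigoplus_{c_l>0}\mathbb{R}u_l}=H^+(S)$; but $0\neq v_k\in H^+(S)$, a contradiction. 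Hence $\Lambda_k(S)$ equals some positive $c_l$, and the two sets coincide. The only genuine subtlety in the argument is the sign bookkeeping of the second paragraph, together with the recognition that the ``refinement of orthogonal decompositions'' observation is precisely what bridges the inclusions and the equalities; everything else is a direct appeal to Lemma \ref{vkch}, Lemma \ref{perp}, and Proposition \ref{orth}.
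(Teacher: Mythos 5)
Your proof is correct and follows the route the paper intends: the paper offers no written argument beyond ``Lemma \ref{vkch} immediately implies'' the corollary, and your classification of each $u_l$ via Proposition \ref{orth} (when $c_l=0$) and Lemma \ref{vkch} with the sign bookkeeping (when $c_l\neq 0$), upgraded to equalities by comparing the two orthogonal decompositions of $H$ and finished with Lemma \ref{perp}, is exactly the fleshed-out version of that intended argument.
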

\begin{Cor}\label{koyuti2}
Let $0<\lambda_1 \leq \lambda_2 \leq \cdots \to \infty$ be the eigenvalues of the Laplacian.
Then, we have $\Lambda_k(g)=\frac{1}{\lambda_k}$ for all $k\in\mathbb{Z}_{>0}$.
\end{Cor}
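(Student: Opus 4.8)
The plan is to reduce Corollary~\ref{koyuti2} to Corollary~\ref{koyuti1} by exhibiting an explicit complete orthonormal system of $H$ adapted to the case $S=g$, and then to upgrade the resulting equality of \emph{sets} to an equality of indexed sequences. The one identity special to $S=g$ is that for $v\in L^2_2(M)$ the $1$-form $g(\nabla v,\cdot)$ equals $dv$, hence $\nabla^\ast(g(\nabla v,\cdot))=\Delta v$. Let $\{\psi_l\}_{l\ge 0}$ be the standard $L^2$-orthonormal system of eigenfunctions, with $\psi_0$ constant and $\Delta\psi_l=\lambda_l\psi_l$. A direct computation gives $\langle\psi_i/\lambda_i,\psi_j/\lambda_j\rangle_H=\int_M\psi_i\psi_j\,d\mu_g=\delta_{ij}$ for $i,j\ge 1$, while the eigenfunction expansion of any $v\in H$ (which has no $\psi_0$-component because $\int_M v\,d\mu_g=0$) converges in $L^2_2(M)$, hence in $H$; thus $\{\psi_l/\lambda_l\}_{l\ge 1}$ is a complete orthonormal system of $H$, and each $u_l:=\psi_l/\lambda_l$ satisfies $c_l\,\Delta^2 u_l=\nabla^\ast(g(\nabla u_l,\cdot))$ with $c_l=1/\lambda_l>0$. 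Since also $\Lambda_1(g)\ge\Lambda_g(\psi_1)=1/\lambda_1>0$, Corollary~\ref{koyuti1} applies in full and yields $H^+(g)=H$, $H^0(g)=H^-(g)=\{0\}$, and $\{\Lambda_k(g):k\in\mathbb{Z}_{>0}\}=\{1/\lambda_l:l\ge 1\}$ as sets.

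To pin down the index I would use the associated functions $v_k$. By Proposition~\ref{fund3}, $\Delta^2 v_k=\tfrac1{\Lambda_k(g)}\Delta v_k$, so $\Delta\bigl(\Delta v_k-\tfrac1{\Lambda_k(g)}v_k\bigr)=0$; since a function with vanishing Laplacian on a closed manifold is constant and both $v_k$ and $\Delta v_k$ have zero mean, this forces $\Delta v_k=\mu_k v_k$ with $\mu_k:=1/\Lambda_k(g)>0$, so every $v_k$ is a $\Delta$-eigenfunction. From $\langle v_j,v_k\rangle_H=\delta_{jk}$ one computes $\int_M v_j v_k\,d\mu_g=\delta_{jk}/(\mu_j\mu_k)$, so $\{\mu_k v_k\}_{k\ge 1}$ is an $L^2$-orthonormal family of eigenfunctions whose span is $\Span_{\mathbb{R}}\{v_k\}$, and whose $H$-closure is $H^+(g)=H$; since $H\hookrightarrow L^2(M)$ continuously and $H$ is $L^2$-dense in $\{v\in L^2(M):\int_M v\,d\mu_g=0\}$, this family is a complete orthonormal system of the latter space. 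Hence the sequence $(\mu_k)_{k\ge 1}$ is, as a multiset, exactly the set of positive eigenvalues of $\Delta$ counted with multiplicity. Finally $\Lambda_1(g)\ge\Lambda_2(g)\ge\cdots$ gives $\mu_1\le\mu_2\le\cdots$, and two nondecreasing enumerations of the same multiset coincide term by term, so $\mu_k=\lambda_k$, i.e.\ $\Lambda_k(g)=1/\lambda_k$.

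The two auxiliary facts about $H$ that this uses — equivalence of the $H$-norm with the $L^2_2$-norm (already noted after the definition of $H$) and $L^2$-density of $H$ in the mean-zero subspace — are routine. The one genuinely delicate point is the multiplicity bookkeeping in the second paragraph: one must verify that the eigenfunctions $\mu_k v_k$ exhaust each eigenspace of $\Delta$, so that the mere set-equality coming from Corollary~\ref{koyuti1} becomes the indexed equality asserted. I expect this to be the main thing to argue carefully.
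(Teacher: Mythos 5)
Your proposal is correct and takes essentially the same route as the paper, which obtains this corollary immediately from Lemma \ref{vkch} and Corollary \ref{koyuti1} applied to the orthonormal system $\{\psi_l/\lambda_l\}_{l\geq 1}$ of $H$, exactly as in your first paragraph. The additional work in your second paragraph — showing each associated function $v_k$ is a $\Delta$-eigenfunction with eigenvalue $1/\Lambda_k(g)$ and matching multiplicities via completeness of $\{\mu_k v_k\}$ in the mean-zero part of $L^2$ — is precisely the indexing/multiplicity bookkeeping the paper leaves implicit, and your argument for it (mutual orthogonality of distinct eigenspaces plus completeness) closes the point you flag as delicate.
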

\subsection{Other properties of $\Lambda_k(S)$}
We consider the value of $\Lambda_1$ of the product of Riemannian manifolds.
Let $(M_i,g_i)$ $(i=1,2)$ be $n_i$-dimensional compact Riemannian manifolds, and $S_i$ symmetric tensors of type $(0,2)$ on $M_i$.
We denote the projections by $\pi_i \colon M_1 \times M_2 \to M_i$.
Then, we have $T(M_1 \times M_2)=\pi_1^\ast TM_1 \bigoplus \pi_2^\ast TM_2$.
We consider the product metric $g=g_1 + g_2$ on $M=M_1 \times M_2$.
We define a symmetric tensor of type $(0,2)$ on $M$ by $S=S_1\oplus S_2$.
Then, we have the following proposition.
\begin{Prop}\label{prodmf}
We have $\Lambda_1(S)=\max \{\Lambda_1(S_1),\Lambda_1(S_2)\}$.
\end{Prop}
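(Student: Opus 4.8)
The plan is to understand the eigenfunctions and eigenvalues of the operator $\Delta^2 v = a\,\nabla^\ast(S(\nabla v,\cdot))$ on the product, reduce them to the factors via separation of variables, and then apply the characterization in Corollary \ref{koyuti1}. The key observation is that on $M = M_1 \times M_2$ the Laplacian splits as $\Delta = \Delta_1 \otimes 1 + 1 \otimes \Delta_2$ (acting on functions pulled back from the factors), and since $S = S_1 \oplus S_2$ respects the splitting $T M = \pi_1^\ast TM_1 \oplus \pi_2^\ast TM_2$, the operator $\nabla^\ast(S(\nabla \cdot,\cdot))$ also decomposes: for $v = \pi_1^\ast\phi$ we get $\nabla^\ast(S(\nabla v,\cdot)) = \pi_1^\ast\big(\nabla^\ast(S_1(\nabla\phi,\cdot))\big)$, and symmetrically for pullbacks from $M_2$.

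First I would build a complete orthonormal system of $H_{(M,g)}$ adapted to the problem. Take complete orthonormal systems $\{\psi^{(1)}_i\}_{i\ge 0}$ and $\{\psi^{(2)}_j\}_{j\ge 0}$ in $L^2(M_1)$ and $L^2(M_2)$ of Laplace eigenfunctions (with $\psi^{(1)}_0,\psi^{(2)}_0$ constant), chosen moreover so that each $\psi^{(1)}_i$ (with $i\ge 1$) is an eigenfunction of the problem on $M_1$, i.e. $c^{(1)}_i \Delta_1^2 \psi^{(1)}_i = \nabla^\ast(S_1(\nabla\psi^{(1)}_i,\cdot))$ for some $c^{(1)}_i\in\mathbb{R}$ (such a basis exists because $\{\Delta_1^{-2}\nabla^\ast(S_1(\nabla\cdot,\cdot))\}$ is a compact self-adjoint operator on $H_{(M_1,g_1)}$ — this is implicitly what Lemma \ref{fund1} and Definition \ref{def1} rest on, together with $H^0$), and likewise for $M_2$. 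Then $\{\pi_1^\ast\psi^{(1)}_i \cdot \pi_2^\ast\psi^{(2)}_j\}_{(i,j)\ne(0,0)}$ is a complete orthonormal system of $H_{(M,g)}$. The point of the argument is that the \emph{pure tensor eigenfunctions}, those with $i=0$ or $j=0$, are exactly the ones on which $\nabla^\ast(S(\nabla\cdot,\cdot))$ acts nontrivially: for $v = \pi_1^\ast\psi^{(1)}_i$ with $i\ge 1$ we have $\Delta^2 v = \lambda_i^2 v$ and $\nabla^\ast(S(\nabla v,\cdot)) = \pi_1^\ast\nabla^\ast(S_1(\nabla\psi^{(1)}_i,\cdot)) = \tfrac{\lambda_i^2}{c^{(1)}_i}v$ (interpreting $c^{(1)}_i=\infty$ when the right side vanishes), so $v$ is an eigenfunction with constant $c^{(1)}_i$; symmetrically for $j\ge1$, $i=0$.

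The remaining, and slightly delicate, point is to handle the mixed modes $v = \pi_1^\ast\psi^{(1)}_i\cdot\pi_2^\ast\psi^{(2)}_j$ with $i,j\ge 1$: one checks that $\nabla v = (\pi_2^\ast\psi^{(2)}_j)\,\pi_1^\ast\nabla\psi^{(1)}_i + (\pi_1^\ast\psi^{(1)}_i)\,\pi_2^\ast\nabla\psi^{(2)}_j$, that $S(\nabla v,\nabla v) = (\pi_2^\ast\psi^{(2)}_j)^2\, S_1(\nabla\psi^{(1)}_i,\nabla\psi^{(1)}_i) + (\pi_1^\ast\psi^{(1)}_i)^2\, S_2(\nabla\psi^{(2)}_j,\nabla\psi^{(2)}_j)$ (the cross term vanishes because $S_1\oplus S_2$ has no mixed components), and hence after integrating and using $\int\psi^{(2)2}_j = \int\psi^{(1)2}_i = 1$ together with $\|\Delta v\|_{L^2}^2 = (\lambda_i+\mu_j)^2$, that
\begin{equation*}
\Lambda_S(v) = \frac{\lambda_i\,\Lambda_{S_1}(\psi^{(1)}_i) + \mu_j\,\Lambda_{S_2}(\psi^{(2)}_j)\,\cdot\,\text{(appropriately weighted)}}{(\lambda_i+\mu_j)^2} \le \frac{\max\{\Lambda_1(S_1),\Lambda_1(S_2)\}\,(\lambda_i+\mu_j)^2}{(\lambda_i+\mu_j)^2},
\end{equation*}
more precisely $\int_M S(\nabla v,\nabla v)\,d\mu_g \le \Lambda_1(S_1)\int_{M_1}|\nabla\psi^{(1)}_i|^2 + \Lambda_1(S_2)\int_{M_2}|\nabla\psi^{(2)}_j|^2 \cdot(\ldots) \le \max\{\Lambda_1(S_1),\Lambda_1(S_2)\}\,\|\Delta v\|_{L^2}^2$, using $\lambda_i^2 + 2\lambda_i\mu_j + \mu_j^2 \ge \lambda_i^2$ etc. So no mixed mode can exceed $\max\{\Lambda_1(S_1),\Lambda_1(S_2)\}$. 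Combining: by Corollary \ref{koyuti1} applied to the orthonormal basis above, every eigenvalue of the $M$-problem that lands in the spectrum of $\Lambda_\bullet(S)$ is either some $c^{(1)}_i$ or some $c^{(2)}_j$, so $\Lambda_1(S) = \sup = \max\{\sup_i c^{(1)}_i, \sup_j c^{(2)}_j\} = \max\{\Lambda_1(S_1),\Lambda_1(S_2)\}$, while the mixed-mode estimate shows $\Lambda_S$ never exceeds this value anywhere on $H_{(M,g)}$. I expect the main obstacle to be bookkeeping the decomposition $\nabla^\ast(S(\nabla\cdot,\cdot))$ cleanly on the product — in particular verifying that pullbacks from one factor are genuinely eigenfunctions (not merely that the quadratic form splits) and confirming that the supremum is attained/approached only through pure tensor modes — but conceptually everything follows from separation of variables plus Corollary \ref{koyuti1}.
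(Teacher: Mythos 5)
Your reverse inequality is fine and matches the paper: a pullback $\pi_1^\ast\phi$ satisfies $\Delta^2(\pi_1^\ast\phi)=\pi_1^\ast(\Delta_1^2\phi)$ and $\nabla^\ast(S(\nabla\pi_1^\ast\phi,\cdot))=\pi_1^\ast\nabla^\ast(S_1(\nabla\phi,\cdot))$, so pulling back a maximizer from the factor with the larger $\Lambda_1$ gives $\Lambda_1(S)\geq\max\{\Lambda_1(S_1),\Lambda_1(S_2)\}$. The upper bound, however, has genuine gaps. First, the orthonormal system you want does not exist in general: you ask for functions on $M_1$ that are simultaneously eigenfunctions of $\Delta_1$ and of the coupled problem $c\,\Delta_1^2\psi=\nabla^\ast(S_1(\nabla\psi,\cdot))$, but these two operators need not commute for a general symmetric $S_1$; the situation where this works is precisely the Einstein case $S_i=a_ig_i$ treated separately in Theorem \ref{produ}, not the general case of Proposition \ref{prodmf}. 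Second, even granting factor-wise eigenbases, the mixed products $\phi\chi$ are not eigenfunctions of the product problem: $\Delta^2(\phi\chi)=\chi\Delta_1^2\phi+2\Delta_1\phi\,\Delta_2\chi+\phi\Delta_2^2\chi$ contains the cross term $2\Delta_1\phi\,\Delta_2\chi$, which has no counterpart in $\nabla^\ast(S(\nabla(\phi\chi),\cdot))=\chi\,\nabla_1^\ast(S_1(\nabla\phi,\cdot))+\phi\,\nabla_2^\ast(S_2(\nabla\chi,\cdot))$. So the hypothesis of Corollary \ref{koyuti1} --- that the \emph{entire} complete orthonormal system consists of solutions of $c_l\Delta^2u_l=\nabla^\ast(S(\nabla u_l,\cdot))$ --- fails, and you cannot read off $\Lambda_1(S)$ from the constants of the pure modes. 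Third, and independently, bounding the Rayleigh quotient $\Lambda_S$ on each mixed basis vector separately does not bound it on linear combinations, since both $\int_M S(\nabla u,\nabla u)$ and $\|\Delta u\|_{L^2}^2$ have cross terms; your closing claim that the mixed-mode estimate controls $\Lambda_S$ ``anywhere on $H_{(M,g)}$'' is a non sequitur.

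The paper avoids all of this by never diagonalizing the coupled operator on $M$: it takes an \emph{arbitrary} $u\in C^\infty(M)$, expands it in the product Laplace eigenbasis $u=\sum a_{ik}\psi_i\psi'_k$, groups the expansion into the partial sums $b_k(x)=\sum_i a_{ik}\psi_i(x)$ and $c_i(y)=\sum_k a_{ik}\psi'_k(y)$, and uses only the definition of $\Lambda_1(S_j)$ on each factor to get
\begin{equation*}
\int_M S(\nabla u,\nabla u)\,d\mu_g\leq \Lambda_1(S_1)\sum_k\|\Delta^{g_1}b_k\|_{L^2}^2+\Lambda_1(S_2)\sum_i\|\Delta^{g_2}c_i\|_{L^2}^2,
\end{equation*}
while the elementary inequality $(\lambda_i+\lambda'_k)^2\geq\lambda_i^2+\lambda'^2_k$ gives $\|\Delta u\|_{L^2}^2\geq\sum_k\|\Delta^{g_1}b_k\|^2+\sum_i\|\Delta^{g_2}c_i\|^2$; dividing yields $\Lambda_S(u)\leq\max\{\Lambda_1(S_1),\Lambda_1(S_2)\}$ for every $u$, with no spectral theory for the coupled operator needed. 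If you want to rescue your argument, this quadratic-form estimate for general $u$ (not just pure tensors) is the missing step; the route through Corollary \ref{koyuti1} should be dropped.
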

\begin{proof}
Let $\{\lambda_i\}$ (resp. $\{\lambda'_k\}$) be the eigenvalues and $\{\psi_i\}$ (resp. $\{\psi'_k\}$) the eigenfunctions of the Laplacian of $(M_1,g_1)$ (resp. $(M_2,g_2)$).
For any $u \in C^\infty(M)$, we have the following decomposition:
\begin{equation*}
u(x,y)=\sum a_{ik} \psi_i(x)\psi'_k(y)=\sum b_k(x) \psi'_k(y)=\sum c_i(y)\psi_i(x). 
\end{equation*}

Therefore,
\begin{equation*}
\begin{split}
S(\nabla u,\nabla u)
&=\sum a_{ik}a_{jl}\Bigl(S_1(\nabla^{g_1}\psi_i,\nabla^{g_1}\psi_j)\psi'_k\psi'_l+\psi_i\psi_j S_2(\nabla^{g_2}\psi'_k,\nabla^{g_2}\psi'_l)\Bigr)\\
&=\sum S_1(\nabla^{g_1}b_k,\nabla^{g_1}b_l)\psi'_k\psi'_l+\sum \psi_i\psi_j S_2(\nabla^{g_2}c_i,\nabla^{g_2}c_j).
\end{split}
\end{equation*}
By the integration, we have
\begin{equation}\label{aa}
\begin{split}
&\int_M S(\nabla u,\nabla u)\,d\mu_g\\
=&\sum_k \int_{M_1} S_1(\nabla^{g_1}b_k,\nabla^{g_1}b_k)\,d\mu_{g_1}+\sum_i \int_{M_2} S_2(\nabla^{g_2}c_i,\nabla^{g_2}c_i)\,d\mu_{g_2}\\
\leq& \Lambda_1(S_1)\sum_k \int_{M_1} (\Delta^{g_1}b_k,\Delta^{g_1}b_k)\,d\mu_{g_1}+\Lambda_1(S_2)\sum_i \int_{M_2} (\Delta^{g_2}c_i,\Delta^{g_2}c_i)\,d\mu_{g_2}.
\end{split}
\end{equation}

We have $b_k(x)=\sum a_{ik}\psi_i(x)$, and so 
\begin{equation}\label{bb}
\int_{M_1} (\Delta^{g_1}b_k,\Delta^{g_1}b_k)\,d\mu_{g_1}=\sum_{i} a_{ik}^2 \lambda_i^2.
\end{equation}
Similarly, we have
\begin{equation}\label{cc}
\int_{M_2} (\Delta^{g_2}c_i,\Delta^{g_2}c_i)\,d\mu_{g_2}=\sum_{k} a_{ik}^2 \lambda'^2_k.
\end{equation}

By (\ref{aa})--(\ref{cc}) and $\Delta (\psi_i \psi'_k)=(\lambda_i+\lambda'_k)\psi_i \psi'_k$, we get
\begin{equation*}
\begin{split}
\int_M (\Delta u,\Delta u)\,d\mu_g
&=\sum_{i,k} a_{ik}^2 (\lambda_i+\lambda'_k)^2\\
&\geq \sum_{i,k} a_{ik}^2 \lambda_i^2+\sum_{ik} a_{ik}^2 \lambda'^2_k\\
&=\sum_k \int_{M_1} (\Delta^{g_1}b_k,\Delta^{g_1}b_k)\,d\mu_{g_1}+\sum_i \int_{M_2} (\Delta^{g_2}c_i,\Delta^{g_2}c_i)\,d\mu_{g_2}\\
&\geq \frac{1}{\max\{\Lambda_1(S_1),\Lambda_1(S_2)\}}\Bigl(\Lambda_1(S_1)\sum_k \int_{M_1} (\Delta^{g_1}b_k,\Delta^{g_1}b_k)\,d\mu_{g_1}\\
&\hspace{100pt}+\Lambda_1(S_2)\sum_i \int_{M_2} (\Delta^{g_2}c_i,\Delta^{g_2}c_i)\,d\mu_{g_2}\Bigr)\\
&\geq \frac{1}{\max\{\Lambda_1(S_1),\Lambda_1(S_2)\}} \int_M S(\nabla u,\nabla u)\,d\mu_g.
\end{split}
\end{equation*}

Therefore,
\begin{equation}\label{tisai}
\Lambda_1(S)\leq \max\{\Lambda_1(S_1),\Lambda_1(S_2)\}.
\end{equation}

Next, we prove that the equality holds.
Suppose that $\Lambda_1(S_1)\geq\Lambda_1(S_2)$.
Moreover, we assume $\Lambda_1(S_1)>0$ (otherwise the proposition is trivial).
Take a non-constant function $v_1 \in C^\infty (M_1)$ such that $\Lambda_1(S_1)=\Lambda_{S_1}(v_1)$.
We regard $v_1$ as a smooth function on $M$; $v_1\in C^\infty(M)$. Then,
\begin{equation}\label{dekai}
\Lambda_1(S)\geq \Lambda_S(v_1)= \Lambda_{S_1}(v_1)=\Lambda_1(S_1)=\max\{\Lambda_1(S_1),\Lambda_1(S_2)\}.
\end{equation}
By (\ref{tisai}) and (\ref{dekai}), we get $\Lambda_1(S)=\max\{\Lambda_1(S_1),\Lambda_1(S_2)\}$. 
\end{proof}
We give a useful formula for $\Lambda_i(S)$.
\begin{Prop}\label{mnmx}
Let \((M,g)\) be a closed Riemannian manifold and $S$ a symmetric tensor of type $(0,2)$ on $M$.
We have  
\begin{equation*}
\Lambda_k(S)=\sup_{L_k\subset H} \left[\inf_{v\in L_k\backslash \{0\}}\Lambda_S(v)\right]
\end{equation*}
for any positive integer $k\in\mathbb{Z}_{>0}$,
where $L_k$ varies over all $k$-dimensional subspaces of $H$.
\end{Prop}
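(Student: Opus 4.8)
The plan is to run the classical Courant--Fischer max-min argument, exploiting that the associated functions $v_1,v_2,\dots$ of Definition \ref{def1} are orthonormal in $H$ and moreover are $S$-orthogonal, i.e.\ $\int_M S(\nabla v_i,\nabla v_j)\,d\mu_g=0$ for $i\neq j$. This last fact follows from $\langle v_i,v_j\rangle_H=0$ together with $\Delta^2 v_i=\frac{1}{\Lambda_i(S)}\nabla^\ast(S(\nabla v_i,\cdot))$ (Proposition \ref{fund3}) and Lemma \ref{3lem}, using also the symmetry of $S$.

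I would first dispose of the degenerate case $\Lambda_1(S)=0$. Then all $\Lambda_k(S)=0$ by Definition \ref{def1}, and $S\leq 0$ by Lemma \ref{one}, so $\Lambda_S(v)\leq 0$ for every $v\in H\setminus\{0\}$; hence $\inf_{v\in L_k\setminus\{0\}}\Lambda_S(v)\leq 0$ for every $k$-dimensional subspace $L_k$. Conversely, taking $c>0$ with $S\geq -cg$ and, for $N\in\mathbb{Z}_{>0}$, letting $L_k=\Span_{\mathbb{R}}\{\psi_{N+1},\dots,\psi_{N+k}\}$ with $\{\psi_i\}$ the Laplace eigenfunctions, the estimate in the proof of Lemma \ref{lem} gives $\int_M S(\nabla v,\nabla v)\,d\mu_g\geq -\frac{c}{\lambda_{N+1}}\|\Delta v\|_{L^2}^2$ for every $v$ in this space, so the inner infimum is $\geq -c/\lambda_{N+1}\to 0$ as $N\to\infty$. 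Thus $\sup_{L_k}\inf_v\Lambda_S(v)=0=\Lambda_k(S)$ in this case.

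From now on assume $\Lambda_1(S)>0$, so $\Lambda_1(S)\geq\cdots\geq\Lambda_k(S)>0$ and $v_1,\dots,v_k$ are available. For the inequality $\Lambda_k(S)\leq\sup_{L_k}\inf_v\Lambda_S(v)$ I would test with $L_k:=\Span_{\mathbb{R}}\{v_1,\dots,v_k\}$: for $v=\sum_{i=1}^k a_iv_i\neq 0$, orthonormality in $H$ gives $\|\Delta v\|_{L^2}^2=\sum a_i^2$, while bilinearity and symmetry of $S$, vanishing of the cross terms, and $\int_M S(\nabla v_i,\nabla v_i)\,d\mu_g=\Lambda_S(v_i)\|\Delta v_i\|_{L^2}^2=\Lambda_i(S)$ give $\int_M S(\nabla v,\nabla v)\,d\mu_g=\sum a_i^2\Lambda_i(S)$; hence $\Lambda_S(v)=\big(\sum a_i^2\Lambda_i(S)\big)\big/\big(\sum a_i^2\big)\geq\Lambda_k(S)$ since $\Lambda_i(S)\geq\Lambda_k(S)$ for $i\leq k$, so $\inf_{v\in L_k\setminus\{0\}}\Lambda_S(v)\geq\Lambda_k(S)$. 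For the reverse inequality I would fix an arbitrary $k$-dimensional $L_k\subset H$ and set $W:=\langle v_1,\dots,v_{k-1}\rangle^\perp$, which has codimension $k-1$ in $H$ (it is the kernel of the linear map $v\mapsto(\langle v,v_1\rangle_H,\dots,\langle v,v_{k-1}\rangle_H)$); since $\dim L_k=k>k-1$, the restriction of this map to $L_k$ has a nonzero kernel, so there is $w\in(L_k\cap W)\setminus\{0\}$. By Definition \ref{def1}, $\Lambda_S(w)\leq\sup_{v\in W\setminus\{0\}}\Lambda_S(v)=\Lambda_k(S)$ (for $k=1$ read $W=H$, and this is the definition of $\Lambda_1(S)$), hence $\inf_{v\in L_k\setminus\{0\}}\Lambda_S(v)\leq\Lambda_S(w)\leq\Lambda_k(S)$; taking the supremum over all such $L_k$ completes the argument.

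I do not anticipate a real obstacle: the computation for the first inequality is immediate from the $S$-orthogonality of the $v_i$, and the only points needing care are the bookkeeping in the degenerate case $\Lambda_1(S)=0$ and the elementary fact — valid in the infinite-dimensional Hilbert space $H$ precisely because $L_k$ is finite-dimensional — that a $k$-dimensional subspace meets every closed subspace of codimension $k-1$ nontrivially.
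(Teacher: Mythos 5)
Your proposal is correct and follows essentially the same route as the paper: the upper bound for $\sup\inf$ comes from picking, inside an arbitrary $k$-dimensional $L_k$, an element orthogonal to $v_1,\dots,v_{k-1}$, and the lower bound from testing with $L_k^0=\Span_{\mathbb{R}}\{v_1,\dots,v_k\}$, where the paper's unproved claim $\Lambda_k(S)=\inf_{v\in L_k^0\setminus\{0\}}\Lambda_S(v)$ is exactly the $S$-orthogonality computation (via Lemma \ref{3lem} and Proposition \ref{fund3}) that you spell out. Your treatment of the degenerate case $\Lambda_1(S)=0$ is also the same idea the paper invokes by referring back to Lemma \ref{lem}, just with the details written out.
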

\begin{proof}
If $\Lambda_1(S)=0$, we have $S\leq 0$.
Similarly to Lemma \ref{lem}, we can show
\begin{equation*}
\sup_{L_k\subset H} \left[\inf_{v\in L_k\backslash \{0\}}\Lambda_S(v)\right]=0
\end{equation*}
for any $k\in\mathbb{Z}_{>0}$, and so the proposition holds.

Suppose that $\Lambda_1(S)>0$.
Let $\{v_k\}_{\mathbb{Z}>0}$ be the associated functions to $\{\Lambda_k(S)\}_{\mathbb{Z}>0}$.
Take a positive integer $k\in\mathbb{Z}_{>0}$.
Let $L_k$ be a $k$-dimensional subspace of $H$.
Then, there exists a function $v\in L_k \backslash \{0\}$ such that $v$ is orthogonal to $v_1,\ldots,v_{k-1}$.
By definition of $\Lambda_k(S)$, we have $\Lambda_S(v)\leq \Lambda_k(S)$.
Therefore, $\inf_{v\in L_k\backslash \{0\}}\Lambda_S(v)\leq \Lambda_k(S)$ holds.
Thus,
\begin{equation*}
\Lambda_k(S)\geq \sup_{L_k\subset H} \left[\inf_{v\in L_k\backslash \{0\}}\Lambda_S(v)\right].
\end{equation*}

Let us show the equality holds.
We put $L^0_k=\Span_\mathbb{R}\{v_1,\cdots,v_k\}$.
Then, we have $\Lambda_k(S)=\inf_{v\in L^0_k\backslash \{0\}}\Lambda_S(v)$.
This implies
\begin{equation*}
\Lambda_k(S)=\sup_{L_k\subset H} \left[\inf_{v\in L_k\backslash \{0\}}\Lambda_S(v)\right].
\end{equation*}
\end{proof}
\begin{Cor}\label{comp}
Let $S$ and $T$ be symmetric tensors of type $(0,2)$ on $M$.
If $S\geq T$, then we have $\Lambda_k(S)\geq \Lambda_k(T)$ for each $k\in\mathbb{Z}\backslash\{0\}$.
Moreover, if $\Lambda_k(S)= \Lambda_k(T)$ for all $k\in\mathbb{Z}\backslash\{0\}$, then $S=T$ holds.
\end{Cor}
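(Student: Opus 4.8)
The plan is to get the inequality directly from the min--max formula of Proposition~\ref{mnmx}, and to establish the rigidity part by a spectral argument for the compact self-adjoint operator attached to $S$. For the inequality, put $R=S-T\ge 0$; since $\int_M R(\nabla v,\nabla v)\,d\mu_g\ge 0$ we have $\Lambda_S(v)=\Lambda_T(v)+\Lambda_R(v)\ge\Lambda_T(v)$ for every $v\in H\setminus\{0\}$, so for $k>0$ Proposition~\ref{mnmx} gives $\Lambda_k(S)=\sup_{L_k}\inf_{v\in L_k\setminus\{0\}}\Lambda_S(v)\ge\sup_{L_k}\inf_{v\in L_k\setminus\{0\}}\Lambda_T(v)=\Lambda_k(T)$, and for $k=-m<0$ one applies this to $-S\le -T$ together with $\Lambda_{-m}(\cdot)=-\Lambda_m(-\cdot)$.

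For the rigidity part, assume $\Lambda_k(S)=\Lambda_k(T)$ for all $k\in\mathbb Z\setminus\{0\}$ and keep $R=S-T\ge 0$; we must show $R=0$. I would introduce $A_S\colon H\to H$ by $\langle A_Su,v\rangle_H=\int_M S(\nabla u,\nabla v)\,d\mu_g$: by the symmetry of $S$ it is self-adjoint, by the compactness of $H\hookrightarrow L^2_1(M)$ and the elliptic estimate it is compact, and likewise $A_T,A_R$, with $A_S=A_T+A_R$ and $A_R\ge 0$. For $\lambda\ne 0$, the relation $A_Sv=\lambda v$ is the same as $\Delta^2v=\lambda^{-1}\nabla^\ast(S(\nabla v,\cdot))$, while $A_Sv=0$ is the same as $v\in H^0(S)$ by Proposition~\ref{orth}; hence by Lemma~\ref{vkch} and the orthogonal splitting $H=H^+(S)\oplus H^0(S)\oplus H^-(S)$, the nonzero eigenvalues of $A_S$ (with multiplicity) are exactly $\{\Lambda_k(S)\}_{k\ne 0}$, with $\ker A_S=H^0(S)$ and positive (resp.\ negative) eigenspace $H^+(S)$ (resp.\ $H^-(S)$), and the same for $T$. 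So $A_S$ and $A_T$ have the same nonzero spectrum with multiplicities.

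The key step is that $A_R$ vanishes on $H^+(T)$ and on $H^-(S)$. Let $\mu_1\ge\mu_2\ge\cdots>0$ be the common positive eigenvalues and $\{f_i\}$ orthonormal eigenvectors of $A_T$ with $A_Tf_i=\mu_if_i$, so $\overline{\Span\{f_i\}}=H^+(T)$. For each $N$ (up to $\dim H^+(T)$), the variational principle $\sum_{i=1}^N\mu_i(A_S)=\max\{\sum_{i=1}^N\langle A_Se_i,e_i\rangle_H:\{e_i\}\text{ orthonormal}\}$ gives $\sum_{i=1}^N\mu_i+\sum_{i=1}^N\langle A_Rf_i,f_i\rangle_H=\sum_{i=1}^N\langle A_Sf_i,f_i\rangle_H\le\sum_{i=1}^N\mu_i(A_S)=\sum_{i=1}^N\mu_i$, so each $\langle A_Rf_i,f_i\rangle_H$ vanishes (as $A_R\ge 0$), hence $A_Rf_i=0$, i.e.\ $A_R|_{H^+(T)}=0$; applying the same reasoning to the pair $(-A_T,-A_S)$, whose difference $-A_T-(-A_S)=A_R$ is $\ge0$ and whose common positive spectrum is $\{-\Lambda_{-k}(T)\}_{k>0}=\{-\Lambda_{-k}(S)\}_{k>0}$, yields $A_R|_{H^-(S)}=0$, since $H^-(S)$ is the positive eigenspace of $-A_S$. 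Now $A_R|_{H^+(T)}=0$ means $A_S=A_T$ on $H^+(T)$, so $H^+(T)$ is $A_S$-invariant and $A_S|_{H^+(T)}$ is positive definite with spectrum the whole positive spectrum of $A_S$; hence $H^+(S)=H^+(T)$, and symmetrically $H^-(S)=H^-(T)$, whence $H^0(S)=(H^+(S)\oplus H^-(S))^\perp=(H^+(T)\oplus H^-(T))^\perp=H^0(T)$, on which $A_S$ and $A_T$ both vanish, so $A_R$ vanishes there too. Since $A_R$ then vanishes on each of the three orthogonal summands $H^+(S),H^0(S),H^-(S)$ of $H$, we get $A_R=0$, i.e.\ $\int_M R(\nabla v,\nabla v)\,d\mu_g=0$ for all $v\in H$; thus $\Lambda_1(R)\le 0$, so $R\le 0$ by Lemma~\ref{one}, and with $R=S-T\ge 0$ this forces $S=T$.

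The main obstacle is the extraction of $A_R|_{H^+(T)}=0$ from the bare equality of the eigenvalue sequences: a direct appeal to the min--max of Proposition~\ref{mnmx} is clumsy when eigenvalues are repeated, which is exactly why the argument passes to partial sums of eigenvalues. I would also treat separately the degenerate cases $\Lambda_1(S)=0$ (then $H^+(S)=H^+(T)=\{0\}$) and $\Lambda_{-1}(S)=0$ (then $H^-(S)=H^-(T)=\{0\}$), where the corresponding halves of the argument are vacuous.
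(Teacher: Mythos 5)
Your proof is correct, and while the inequality half is exactly the paper's (monotonicity of $\Lambda_S(v)$ in $S$ fed into Proposition \ref{mnmx}, plus the passage to $-S\leq -T$ for negative indices), the rigidity half takes a genuinely different route. The paper stays inside its own toolkit: choosing $l$ at the start of a block of equal values of $\Lambda_\bullet(T)$ and testing Proposition \ref{mnmx} on the $l$-dimensional spans $\Span_{\mathbb{R}}\{v_1,\dots,v_{l-1},v_{l+i}\}$ built from the associated functions of $T$, it shows block by block that each $v_{l+i}$ is also an associated function for $S$; equality of the two quadratic forms on $H^+$, $H^-$ and $H^0$ follows, and Lemma \ref{one} applied to $S-T$ together with $S\geq T$ finishes. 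You instead package the quadratic form into the compact self-adjoint operator $A_S$ on $H$, identify its nonzero spectrum and spectral subspaces with $\{\Lambda_k(S)\}_{k\neq0}$ and $H^{\pm}(S)$, $H^0(S)$ via Proposition \ref{fund3}, Lemma \ref{vkch} and Proposition \ref{orth} (this identification is sound), and then use the Ky Fan partial-sum principle to force $\langle A_Rf_i,f_i\rangle_H=0$ on the eigenvectors of $A_T$, hence $A_R|_{H^+(T)}=0$ and, by the swap to $(-A_T,-A_S)$, $A_R|_{H^-(S)}=0$; equal multiplicities then give $H^\pm(S)=H^\pm(T)$, so $A_R=0$ on all of $H$, and you conclude exactly as the paper does. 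What your route buys is an automatic treatment of repeated eigenvalues (the partial trace sums replace the paper's block-by-block bookkeeping with the spans $L^0_{l,i}$); what it costs is the import of standard but external facts not proved in the paper — compactness of $A_S$, the spectral description of a compact self-adjoint operator, and Ky Fan's maximum principle in infinite dimensions — whereas the paper's min--max argument is self-contained. Two small points worth making explicit: at the end, Lemma \ref{lem} gives $\Lambda_1(S-T)\geq0$ a priori, so your ``$\Lambda_1(R)\leq0$'' really means the supremum equals $0$, which is the hypothesis of Lemma \ref{one}; and the step $H^+(S)=H^+(T)$ is cleanest phrased as: for each positive eigenvalue the $A_T$-eigenspace is contained in the $A_S$-eigenspace and both have the same finite dimension, hence they coincide.
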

\begin{proof}
Take a positive integer $k\in\mathbb{Z}_{>0}$.
We have
\begin{equation*}
\inf_{v\in L_k\backslash \{0\}}\Lambda_S(v)\geq \inf_{v\in L_k\backslash \{0\}}\Lambda_T(v)
\end{equation*}
for any $k$-dimensional subspace $L_k$ of $H$.
Thus, we get $\Lambda_k(S)\geq \Lambda_k(T)$ by Proposition \ref{mnmx}.
Similarly, we have $\Lambda_k(-T)\geq \Lambda_k(-S)$;
therefore, $\Lambda_{-k}(S)\geq \Lambda_{-k}(T)$.

Suppose that $\Lambda_k(S)= \Lambda_k(T)$ holds for all $k\in \mathbb{Z}\backslash\{0\}$.
If $\Lambda_1(T)=0$, then $H^+(S)=H^+(T)=\{0\}$.
Assume that $\Lambda_1(T)>0$ and take the associated functions $\{v_k\}_{\mathbb{Z}_{>0}}$ to $\{\Lambda_k(T)\}_{\mathbb{Z}_{>0}}$ for $T$.
Take a positive integer $l\in\mathbb{Z}_{>0}$ such that $\Lambda_{l-1}(T)<\Lambda_l(T)$ (put $\Lambda_0(T)=0$).
Suppose that $\Lambda_l(T)=\Lambda_{l+K}(T)$ for $K\in\mathbb{Z}_{\geq0}$.
For each $i=0,\ldots,K$, we put $L^0_{l,i}=\Span_\mathbb{R}\{v_1,\cdots,v_{l-1},v_{l+i}\}$.
By Proposition \ref{mnmx}, we have 
\begin{equation*}
\Lambda_{l+i}(S)=\Lambda_l(S)\geq \inf_{v\in L^0_{l,i}\backslash \{0\}}\Lambda_S(v).
\end{equation*}
Take $v \in L^0_{l,i}\backslash \{0\}$ that attains the infimum of the right hand side.
Then, we have $\Lambda_{l+i}(S)\geq \Lambda_S(v)\geq \Lambda_T(v)\geq \Lambda_{l+i}(T)$.
By $\Lambda_{l+i}(S)=\Lambda_{l+i}(T)$, we have $v=v_{l+i}$ and $\Lambda_{l+i}(S)=\Lambda_S(v_{l+i})$.
Thus, $\{v_k\}_{\mathbb{Z}_{>0}}$ is the family of associated functions to $\{\Lambda_k(S)\}_{\mathbb{Z}_{>0}}$ for $S$, and so $\int_M S(\nabla v,\nabla v)\,d\mu_g = \int_M T(\nabla v,\nabla v)\,d\mu_g$ for any $v\in H^+(S)=H^+(T)$.
Similarly, we have $H^-(S)=H^-(T)$ and $\int_M S(\nabla v,\nabla v)\,d\mu_g = \int_M T(\nabla v,\nabla v)\,d\mu_g$ for any $v\in H^-(S)=H^-(T)$.
Consequently, we have $H^0(S)=H^0(T)$ and
 $\int_M S(\nabla v,\nabla v)\,d\mu_g = \int_M T(\nabla v,\nabla v)\,d\mu_g$ for any $v\in H$.
This and Lemma \ref{one} imply $S=T$.  
\end{proof}
\section{Main properties of $\Omega_k$}
\subsection{The proofs of Theorem A and Theorem C}
In this section we consider the case when $S=\Ric_g$.
\begin{Def}\label{def2}
We put $\Omega_k(g)=\Lambda_k(\Ric_g)$ for a closed Riemannian manifold $(M,g)$.
\end{Def}
In this case, the following Bochner formula plays an important role.
\begin{Prop}
Let $(M,g)$ be a Riemannian manifold and $\omega$ a $1$-form on $M$.
Then, we have
\begin{equation*}
\Delta \omega =\nabla^\ast \nabla \omega + \Ric(\omega^\sharp,\cdot),
\end{equation*}
where $\omega^\sharp$ denotes a vector field such that $g(\omega^\sharp,Y)=\omega(Y)$ for any vector field $Y$.
\end{Prop}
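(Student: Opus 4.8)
The plan is to establish this classical Weitzenb\"ock identity by a local computation at an arbitrary point $p\in M$. Fix a \emph{synchronous} orthonormal frame $\{e_i\}$ near $p$, i.e.\ one with $\nabla e_i|_p=0$; then $[e_i,e_j]|_p=0$, at $p$ the connection Laplacian is $\nabla^\ast\nabla=-\sum_i\nabla_{e_i}\nabla_{e_i}$, and covariant derivatives of component functions agree at $p$ with frame derivatives. Here $\Delta=d\delta+\delta d$ denotes the Hodge Laplacian on $1$-forms and $\delta=d^\ast$. Writing $\omega=\sum_j\omega_j e^j$ with $\omega_j=\omega(e_j)$, the goal is to show $(\Delta\omega)(e_j)=(\nabla^\ast\nabla\omega)(e_j)+\Ric(\omega^\sharp,e_j)$ at $p$; since $p$ is arbitrary and both sides are tensorial, this proves the proposition. (Alternatively, one could quote the general Weitzenb\"ock formula for the Hodge Laplacian on $p$-forms specialized to $p=1$, but a direct argument keeps the paper self-contained.)

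First I would compute the two halves of $\Delta\omega$ at $p$. Using $\delta\omega=-\tr_g(\nabla\omega)$ one finds $(d\delta\omega)(e_j)=-\sum_i\nabla_{e_j}\nabla_{e_i}\omega_i$ at $p$. Using $d\omega=\sum_{i<j}(\nabla_{e_i}\omega_j-\nabla_{e_j}\omega_i)\,e^i\wedge e^j$ together with $(\delta d\omega)(e_j)=-\sum_i\nabla_{e_i}\big((d\omega)(e_i,e_j)\big)$ one finds $(\delta d\omega)(e_j)=-\sum_i\nabla_{e_i}\nabla_{e_i}\omega_j+\sum_i\nabla_{e_i}\nabla_{e_j}\omega_i$ at $p$. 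Adding these, the term $-\sum_i\nabla_{e_i}\nabla_{e_i}\omega_j$ is exactly $(\nabla^\ast\nabla\omega)(e_j)$, and the remaining two terms combine into the commutator $\sum_i\big[(\nabla^2\omega)(e_i,e_j)-(\nabla^2\omega)(e_j,e_i)\big](e_i)$.

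Next I would identify the commutator term with curvature. Since $[e_i,e_j]|_p=0$, it equals $\sum_i\big(R(e_i,e_j)\omega\big)(e_i)$, where $R$ acts on $1$-forms by $(R(X,Y)\omega)(Z)=-\omega(R(X,Y)Z)$, hence it equals $-\sum_i\omega\big(R(e_i,e_j)e_i\big)$. Invoking the antisymmetry and pair-symmetry of the Riemann tensor, $\sum_i R(e_i,e_j)e_i$ contracts to the Ricci tensor (with the sign convention normalized so that $\Ric>0$ on the round sphere), which gives $-\sum_i\omega\big(R(e_i,e_j)e_i\big)=\Ric(\omega^\sharp,e_j)$. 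Combining with the previous step yields the asserted identity at $p$.

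The only real difficulty is bookkeeping: the curvature, Ricci, and Hodge-Laplacian sign conventions must be pinned down consistently so that the contraction $-\sum_i\omega(R(e_i,e_j)e_i)$ comes out as $+\Ric(\omega^\sharp,e_j)$ rather than its negative, and so that the scalar Laplacian convention $\Delta=-g^{ij}\nabla_i\nabla_j$ matches $\delta d$ on functions. Once the conventions are fixed, every step above is a routine computation in the synchronous frame.
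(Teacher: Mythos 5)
Your argument is correct: it is the standard Weitzenb\"ock computation for the Hodge Laplacian on $1$-forms in a synchronous orthonormal frame, and the sign bookkeeping works out with the conventions the paper actually uses (the scalar Laplacian $\Delta=-g^{ij}\nabla_i\nabla_j$ agrees with $\delta d$ on functions, $\Delta$ on forms is the Hodge Laplacian so that $\Delta\,dv=d\Delta v$ as used later in the proof of Theorem A, and the curvature convention makes $\Ric>0$ on the round sphere, under which the contraction $-\sum_i\omega(R(e_i,e_j)e_i)$ is indeed $+\Ric(\omega^\sharp,e_j)$). Note, however, that the paper gives no proof of this Proposition at all: it is quoted as the classical Bochner formula and immediately applied. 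So there is no argument in the paper to compare against; your synchronous-frame computation (equivalently, specializing the general Weitzenb\"ock formula to $p=1$) is a correct, self-contained substitute for the citation.
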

By the Bochner formula, we have the following theorem.
\begin{A}\label{main1}
For any closed $n$-dimensional Riemannian manifold $(M,g)$, we have $\Omega_1(g) \leq \frac{n-1}{n}$. 
If $\Omega_1(g)=\frac{n-1}{n}$, then $(M,g)$ is conformal to the standard sphere.
\end{A}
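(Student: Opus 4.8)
The plan is to use the Bochner formula together with Proposition \ref{fund3} (applied with $S=\Ric_g$) to analyze the associated function $v_1$ to $\Omega_1(g)$. First I would observe that if $\Omega_1(g)=0$ there is nothing to prove, so assume $\Omega_1(g)>0$ and let $v=v_1\in C^\infty(M)$ be the associated function, normalized by $\|\Delta v\|_{L^2}=1$, so that $\int_M \Ric(\nabla v,\nabla v)\,d\mu_g=\Omega_1(g)$ and $\Delta^2 v=\frac{1}{\Omega_1(g)}\nabla^\ast(\Ric(\nabla v,\cdot))$. Applying the Bochner formula to $\omega=dv$ gives $\nabla^\ast(\Ric(\nabla v,\cdot))=\nabla^\ast(\Delta dv-\nabla^\ast\nabla dv)=\Delta^2 v-\nabla^\ast\nabla^\ast\nabla dv$, hence $\nabla^\ast\nabla^\ast\nabla^2 v=\bigl(1-\frac{1}{\Omega_1(g)}\bigr)\,\text{(something)}$; more precisely, integrating against $v$ one gets
\begin{equation*}
\int_M \Ric(\nabla v,\nabla v)\,d\mu_g = \int_M (\Delta v)^2\,d\mu_g - \int_M |\nabla^2 v|^2\,d\mu_g.
\end{equation*}

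The key inequality is the pointwise Kato-type estimate $|\nabla^2 v|^2\geq \frac{1}{n}(\Delta v)^2$, which follows from decomposing $\nabla^2 v$ into its trace part $-\frac{\Delta v}{n}g$ and trace-free part and using $|g|^2=n$. Substituting, I would obtain
\begin{equation*}
\Omega_1(g) = \int_M \Ric(\nabla v,\nabla v)\,d\mu_g \leq \int_M (\Delta v)^2\,d\mu_g - \frac{1}{n}\int_M (\Delta v)^2\,d\mu_g = \frac{n-1}{n},
\end{equation*}
using $\|\Delta v\|_{L^2}^2=1$. This establishes the bound $\Omega_1(g)\leq\frac{n-1}{n}$.

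For the rigidity statement, suppose $\Omega_1(g)=\frac{n-1}{n}$. Then equality must hold in the Kato estimate $|\nabla^2 v|^2\geq\frac1n(\Delta v)^2$ pointwise almost everywhere, hence everywhere by smoothness; this forces the trace-free part of $\nabla^2 v$ to vanish identically, i.e. $\nabla^2 v=-\frac{\Delta v}{n}g$. Since $v$ is non-constant (as $\|\Delta v\|_{L^2}=1\neq 0$), Theorem \ref{Tas} (Tashiro's lemma) applies and yields that $(M,g)$ is conformal to the standard sphere $S^n(1)$, which is exactly the conclusion.

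I expect the main obstacle to be bookkeeping rather than a conceptual difficulty: one must carefully justify the integration by parts that converts $\int_M \Ric(\nabla v,\nabla v)\,d\mu_g$ into $\int_M (\Delta v)^2 - |\nabla^2 v|^2\,d\mu_g$ via the Bochner formula (Bochner--Reilly type identity), making sure the divergence terms integrate to zero on the closed manifold, and one must confirm that $v$ being an extremizer indeed gives a genuine (non-constant, smooth) solution so that Theorem \ref{Tas} is applicable — both of these are supplied by Proposition \ref{fund3} and Lemma \ref{fund2}. A minor point worth checking is that the Kato inequality's equality case is characterized exactly by vanishing of the trace-free Hessian, with no curvature hypothesis needed, which is precisely what makes Tashiro's theorem the right tool here.
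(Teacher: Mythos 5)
Your proposal is correct and follows essentially the same route as the paper: the integrated Bochner identity $\int_M \Ric(\nabla v,\nabla v)\,d\mu_g=\int_M(\Delta v)^2\,d\mu_g-\int_M|\nabla^2 v|^2\,d\mu_g$, the pointwise trace inequality $|\nabla^2 v|^2\geq\tfrac{1}{n}(\Delta v)^2$ from the trace/trace-free decomposition, and Theorem \ref{Tas} applied to the extremal function in the equality case. The only cosmetic difference is that the paper establishes the inequality for every smooth function and then takes the supremum, whereas you apply it directly to the extremizer (smooth by Lemma \ref{fund2}), which changes nothing essential.
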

\begin{proof}
For any function $v\in C^\infty(M)$, we have $\Delta v=-\tr\nabla^2 v$; therefore,
\begin{equation*}
0\leq (\nabla^2 v + \frac{1}{n} \Delta v g,\nabla^2 v + \frac{1}{n} \Delta v g)=(\nabla^2 v,\nabla^2 v)-\frac{1}{n}(\Delta v,\Delta v).
\end{equation*}
Thus, we have
\begin{equation}\label{fundlich}
\|\Delta v\|_{L^2}^2 \leq n \|\nabla^2 v\|_{L^2}^2.
\end{equation}
By the Bochner formula and (\ref{fundlich}), we have
\begin{equation*}
\begin{split}
\int_M \Ric(\nabla v,\nabla v)\,d\mu_g
&=\int_M (\Delta dv,dv)\,d\mu_g-\int_M (\nabla^\ast \nabla dv,dv)\,d\mu_g\\
&=\int_M (d\Delta v,dv)\,d\mu_g-\int_M (\nabla^2 v,\nabla^2 v)\,d\mu_g\\
&=\int_M (\Delta v,\Delta v)\,d\mu_g-\int_M (\nabla^2 v,\nabla^2 v)\,d\mu_g\\
&\leq \frac{n-1}{n} \int_M (\Delta v,\Delta v)\,d\mu_g
\end{split}
\end{equation*}
Consequently, we get $\Lambda_{\Ric}(v)\leq \frac{n-1}{n}$ for any non-constant function $v\in C^\infty(M)$.
Therefore, we get $\Omega_1(g) \leq \frac{n-1}{n}$.

Suppose that $\Omega_1(g)= \frac{n-1}{n}$.
Then, there exists a function $v_1 \in H$ such that  $\Lambda_{\Ric}(v_1)=\frac{n-1}{n} $, and so  $\|\Delta v_1\|_{L^2}^2 = n \|\nabla^2 v_1\|_{L^2}^2$.
Therefore, we have $\nabla^2 v_1 + \frac{1}{n} \Delta v_1 g=0$.
Thus, Theorem \ref{Tas} implies the theorem.
\end{proof}
\begin{Rem}
We have
\begin{equation*}
\|\Delta v\|_{L^2}^2 \leq \frac{1}{1-\Omega_1(g)} \|\nabla^2 v\|_{L^2}^2
\end{equation*}
for any $v\in C^\infty(M)$.
\end{Rem}

\begin{Prop}
Let $0<\lambda_1 \leq \lambda_2 \leq \cdots \to \infty$ be the eigenvalues of the Laplacian.
If there exists a constant $r>0$ such that $\Ric \geq r g$, then we have
\begin{equation*}
\lambda_k \geq \frac{r}{\Omega_k(g)}
\end{equation*}
for any $k=1,2,\ldots$.
Moreover, if $\lambda_k = \frac{r}{\Omega_k(g)}$ holds for all $k=1,2,\ldots$, then $\Ric= r g$ holds.
\end{Prop}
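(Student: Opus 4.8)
The plan is to obtain this as a direct consequence of the monotonicity and rigidity of $\Lambda_k$ under the order relation on symmetric $(0,2)$-tensors (Corollary \ref{comp}), combined with the explicit identification $\Lambda_k(g)=1/\lambda_k$ of Corollary \ref{koyuti2} and the obvious scaling behaviour of $\Lambda_k$.

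First I would record that $rg$ is itself a symmetric tensor of type $(0,2)$ and that $\Lambda_{rg}(v)=r\Lambda_g(v)$ for every $v\in H\backslash\{0\}$; feeding this into the min--max characterization of Proposition \ref{mnmx} gives $\Lambda_k(rg)=r\Lambda_k(g)=r/\lambda_k$ for every $k\in\mathbb{Z}_{>0}$. The hypothesis is precisely $\Ric_g\geq rg$, so Corollary \ref{comp} yields $\Omega_k(g)=\Lambda_k(\Ric_g)\geq\Lambda_k(rg)=r/\lambda_k$. In particular $\Omega_k(g)\geq r/\lambda_k>0$, so we may invert and rearrange to get $\lambda_k\geq r/\Omega_k(g)$, which is the first assertion.

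For the rigidity statement, suppose $\lambda_k=r/\Omega_k(g)$ for all $k\geq 1$, i.e. $\Lambda_k(\Ric_g)=r/\lambda_k=\Lambda_k(rg)$ for all $k\in\mathbb{Z}_{>0}$. To apply the equality clause of Corollary \ref{comp} I must also know that the negative-index invariants coincide. Here I would use positivity: since $\Ric_g\geq rg$ with $r>0$ we have $\Ric_g>0$ and $rg>0$, hence $-\Ric_g<0$ and $-rg<0$, and Lemma \ref{one} (or directly Lemma \ref{lem}) forces $\Lambda_1(-\Ric_g)=\Lambda_1(-rg)=0$; therefore $\Lambda_{-k}(\Ric_g)=-\Lambda_k(-\Ric_g)=0=\Lambda_{-k}(rg)$ for every $k\geq 1$. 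Thus $\Lambda_k(\Ric_g)=\Lambda_k(rg)$ for all $k\in\mathbb{Z}\backslash\{0\}$, and the rigidity part of Corollary \ref{comp} gives $\Ric_g=rg$.

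There is no genuinely hard step in this argument; it is essentially bookkeeping around the scaling of $\Lambda_k$ and the identity $\Lambda_k(g)=1/\lambda_k$. The only point that requires a moment's care is that the equality clause of Corollary \ref{comp} quantifies over all nonzero integer indices, so one cannot simply quote it from the positive-index equalities: one has to dispose explicitly of the negative indices, and this is exactly where the strict positivity $r>0$ (which makes $-\Ric_g$ and $-rg$ negative definite) is used.
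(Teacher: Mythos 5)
Your argument is correct and is essentially the paper's own proof: the paper deduces $\Lambda_k(rg)=\frac{r}{\lambda_k}$ from Corollary \ref{koyuti2} and then invokes the monotonicity and rigidity of Corollary \ref{comp}. The only difference is that you explicitly verify the negative-index equalities $\Lambda_{-k}(\Ric_g)=\Lambda_{-k}(rg)=0$ (via positivity of $\Ric_g$ and $rg$ and Lemma \ref{lem}) required by the rigidity clause of Corollary \ref{comp}, a detail the paper leaves implicit.
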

\begin{proof}
By Corollary \ref{koyuti2}, we have $\Lambda_k(r g)=\frac{r}{\lambda_k}$.
Thus, Corollary \ref{comp} implies the proposition.
\end{proof}

Putting $k=1$, we get the following.
\begin{C}\label{Lich}
If there exists a constant $r>0$ such that $\Ric \geq r g$, then
\begin{equation*}
\lambda_1 \geq\frac{r}{\Omega_1(g)}\geq \frac{n}{n-1}r.
\end{equation*}
\end{C}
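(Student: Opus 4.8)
The plan is to read Theorem C off directly from the two statements that immediately precede it. First I would invoke the Proposition just proved above with $k=1$: since $\Ric_g \ge rg$ for some $r>0$, it gives $\lambda_1 \ge r/\Omega_1(g)$, which is the first of the two desired inequalities. Then I would invoke Theorem A, namely $\Omega_1(g) \le \frac{n-1}{n}$; because $r>0$ this yields $r/\Omega_1(g) \ge \frac{n}{n-1}\,r$, the second inequality. Chaining the two gives $\lambda_1 \ge r/\Omega_1(g) \ge \frac{n}{n-1}\,r$, as claimed.

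If one prefers not to cite the intermediate Proposition, I would argue directly instead. By Corollary \ref{koyuti2} we have $\Lambda_1(g) = 1/\lambda_1$, and since the functional $\Lambda_S$ is linear in $S$ we get $\Lambda_1(rg) = r\Lambda_1(g) = r/\lambda_1$. The curvature hypothesis $\Ric_g \ge rg$ as symmetric $(0,2)$-tensors then lets the monotonicity statement of Corollary \ref{comp} apply, giving $\Omega_1(g) = \Lambda_1(\Ric_g) \ge \Lambda_1(rg) = r/\lambda_1$, i.e. $\lambda_1 \ge r/\Omega_1(g)$; combining with Theorem A as above finishes the proof.

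There is essentially no obstacle here: Theorem C is a formal consequence of Theorem A together with Corollaries \ref{koyuti2}--\ref{comp}. The only minor point I would verify is that the divisions by $\Omega_1(g)$ are legitimate, i.e. $\Omega_1(g)>0$. This holds because $\Ric_g \ge rg$ with $r>0$ forces $\Ric_g$ to be positive definite everywhere, so $\Ric_g \le 0$ fails, and then Lemma \ref{one} (equivalently Lemma \ref{subsp} with $k=1$) gives $\Omega_1(g) = \Lambda_1(\Ric_g) > 0$; alternatively one sees it at once from $\Omega_1(g) \ge r/\lambda_1 > 0$.
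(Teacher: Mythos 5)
Your proposal is correct and follows the paper's own route exactly: the paper deduces Theorem C by putting $k=1$ in the preceding Proposition (whose proof is precisely your "direct" argument via Corollary \ref{koyuti2} and Corollary \ref{comp}) and combining with Theorem A. Your extra remark that $\Omega_1(g)>0$ is a harmless verification the paper leaves implicit.
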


Let us consider the behavior of $\Omega_k(g)$ under the homothetic transformation.
Let $(M,g)$ be a closed Riemannian manifold and  $a$ a positive constant.
Then, for any $u\in C^\infty(M)$ we have $\nabla^{a^2g}u=\frac{1}{a^2} \nabla^g u$, $\Delta^{a^2g} u=\frac{1}{a^2}\Delta^g u$, $\Ric_{a^2 g}=\Ric_{g}$ and $d\mu_{a^2g}= a^n d\mu_g$.
Thus, if $u$ is a non-constant function, we have
\begin{equation*}
\frac{\int_M \Ric_{a^2g}(\nabla^{a^2g} u,\nabla^{a^2g}u)\,d\mu_{a^2 g}}{\int_M (\Delta^{a^2g} u,\Delta^{a^2g}u)\,d\mu_{a^2g}}=\frac{\int_M \Ric_{g}(\nabla^{g} u,\nabla^{g}u)\,d\mu_{g}}{\int_M (\Delta^{g} u,\Delta^{g}u)\,d\mu_{g}}.
\end{equation*}
Therefore, we have $\Omega_k(a^2 g)=\Omega_k(g)$ for any $k\in \mathbb{Z}\backslash\{0\}$.

On the other hand, $\lambda_k(a^2g)=\frac{1}{a^2} \lambda_k(g)$ holds.
Based on these, we give an example of non-Einstein Riemannian manifold such that $\Ric \geq r g$ and $\lambda_1 =\frac{r}{\Omega_1(g)}$ for some positive constant $r>0$.
\begin{Example}
Let $(M_i,g_i)$ $(i=1,2)$ be closed Einstein manifolds of dimension $n_i$ such that $\Ric_{g_i}=g_i$ holds.
We assume that $\Omega_1(g_1)\geq\Omega_1(g_2)$.
For each $r>1$, we consider the metric $G_r=r^2 g_1 + g_2$ on the product manifold $M=M_1 \times M_2$.
If $r$ is large enough, we have $\lambda_1(G_r)=\min \{\frac{1}{r^2} \lambda_1(g_1),\lambda_1(g_2)\}=\frac{1}{r^2} \lambda_1(g_1)$.

We have
\begin{equation*}
\Ric_{G_r}=\Ric_{g_1}\oplus \Ric_{g_2}=g_1 + g_2 \geq \frac{1}{r^2}G_r,
\end{equation*}
and $\Omega_1(G_r)=\Omega_1(g_1)$ by Proposition \ref{prodmf}.
Therefore, we get
\begin{equation*}
\frac{1}{r^2} \frac{1}{\Omega_1(g_1)}=\frac{1}{r^2} \lambda_1(g_1)=\lambda_1(G_r).
\end{equation*}
This gives an example mentioned above. 
\end{Example}
\subsection{The proof of Theorem B}
In this subsection, we prove Theorem B.
\begin{B}\label{chr}
Let $(M,g)$ be a closed Riemannian manifold of dimension $n$.
If $\Omega_1(g)=\Omega_2(g)=\frac{n-1}{n}$, then $(M,g)$ is isometric to the standard sphere of a certain radius.
\end{B}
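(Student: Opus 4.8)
Before the author's proof, here is how I would attack Theorem B.

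\medskip

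\noindent\textbf{Step 1: upgrade the hypothesis to two independent Tashiro solutions.}
Apply Definition \ref{def1} to $S=\Ric_g$. Since $\Omega_1(g)=\Omega_2(g)=\tfrac{n-1}{n}>0$, there exist $v_1,v_2\in H$ with $\|\Delta v_i\|_{L^2}=1$, $\langle v_1,v_2\rangle_H=0$, and $\Lambda_{\Ric}(v_i)=\tfrac{n-1}{n}$ for $i=1,2$. Reading the chain of estimates in the proof of Theorem A in the case of equality, $\Lambda_{\Ric}(v_i)=\tfrac{n-1}{n}$ forces equality in (\ref{fundlich}), hence the traceless Hessian has zero $L^2$-norm and $\nabla^2 v_i+\tfrac1n(\Delta v_i)\,g=0$; in particular each $v_i$ is smooth and non-constant. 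Being orthonormal in $H$, $v_1$ and $v_2$ are linearly independent, and since both have vanishing mean no affine relation $v_1=av_2+b$ can hold unless $a=b=0$.

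\medskip

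\noindent\textbf{Step 2: normal form from Tashiro, and decomposition along the fibres.}
By the structural refinement of Theorem \ref{Tas} for closed manifolds (the part of Tashiro's work reviewed in Section 3, i.e.\ Lemma \ref{clam} and its surroundings), the existence of the non-constant $v_1$ with $\nabla^2 v_1+\tfrac1n(\Delta v_1)g=0$ makes $M$ diffeomorphic to $S^n$, with metric $dr^2+f(r)^2 g_{S^{n-1}}$ on $(0,L)\times S^{n-1}$ (away from the two critical points of $v_1$), where $f>0$ on $(0,L)$, $f(0^+)=f(L^-)=0$, $f'(0^+)=1=-f'(L^-)$, and $v_1=v_1(r)$ with $v_1'=\alpha f$ for a nonzero constant $\alpha$. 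Now write $v_2=\sum_{m\ge0}v_2^{(m)}$ as a sum of fibrewise $\Delta_{S^{n-1}}$-eigencomponents. Because $\nabla^2(\cdot)+\tfrac1n\Delta(\cdot)g$ and $\Delta$ commute with the fibrewise $O(n)$-action, each $v_2^{(m)}$ again solves Tashiro's equation. Restricting the equation to fibre directions shows the fibre-traceless part of the fibre-Hessian of $v_2^{(m)}$ vanishes; since on the round $S^{n-1}$ this happens only for $m\le1$, we get $v_2=v_2^{(0)}+v_2^{(1)}$ with $v_2^{(0)}=h_0(r)$ and $v_2^{(1)}=\sum_j h_j(r)Y_j(\theta)$ over degree-one spherical harmonics $Y_j$. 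The $rr$- and $ri$-components of Tashiro's equation then give $h_0'=\beta f$ for a constant $\beta$, and, for each $j$, first $h_j'=(f'/f)h_j$ (so $h_j=c_jf$) and then, on reinsertion, $c_j\bigl(ff''-(f')^2+1\bigr)=0$.

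\medskip

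\noindent\textbf{Step 3: the dichotomy.}
If some $c_j\neq0$, then $ff''-(f')^2=-1$ on $(0,L)$; integrating once gives $(f')^2=1+Cf^2$, and the boundary conditions $f(0^+)=0$, $f'(0^+)=1$ together with the requirement that $f$ returns to $0$ at a finite $L$ with $f'(L^-)=-1$ force $C=-a^2<0$ and $f(r)=\tfrac1a\sin(ar)$ on $[0,\pi/a]$ — that is, $g$ is the round metric of radius $1/a$, and we are done. Otherwise all $c_j=0$, so $v_2=h_0(r)$ with $v_2'=\beta f$ and $\beta\neq0$ (else $v_2$ is constant). Then $v_1'=(\alpha/\beta)v_2'$ on $(0,L)$, so $v_1-(\alpha/\beta)v_2$ is constant on the dense set $(0,L)\times S^{n-1}$, hence on $M$; having vanishing mean it is $0$, contradicting the linear independence of $v_1,v_2$. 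Thus this branch is vacuous and $(M,g)$ is the standard sphere.

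\medskip

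\noindent\textbf{Expected main obstacle.}
The delicate part is not Step 1 but Step 2: having Tashiro's warped-product normal form available with the correct regularity at the two poles, and carrying the fibrewise-harmonic analysis of a \emph{second} concircular function through to the single scalar ODE $ff''-(f')^2+1=0$; the closing integration of that ODE is elementary but must be done with care about the degeneracies at $r=0,L$. This is precisely the content the paper isolates as Lemma \ref{clam} (``a Riemannian manifold with two distinct rotation axes is the standard sphere''); once that lemma is available, Theorem B is its one-line application to $u=v_1$ and $u=v_2$.
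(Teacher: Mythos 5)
Your proposal is correct in substance, but it proves Lemma \ref{clam} by a genuinely different route than the paper. Step 1 coincides with what the paper does implicitly (equality in the Bochner/trace inequality of Theorem A applied to both $v_1$ and $v_2$, giving two linearly independent solutions of $\nabla^2 u=-\frac{\Delta u}{n}g$), and both arguments then lean on Tashiro's structure theorem (Property A) for the warped-product normal form $dt^2+f(t)^2g_{n-1}$ determined by $v_1$. From there the paper argues geometrically: it first shows the critical points of $u_2$ avoid those of $u_1$ (by the same ``proportional radial derivatives force an affine relation'' observation you use in your degenerate branch), then builds a third concircular function $u_3$ whose critical point lies on the level $\{t_1\}\times S^{n-1}$ where $\psi_1''(t_1)=0$, so that this level is totally geodesic; Property B (rotational invariance of $\Ric(\dot\gamma,\dot\gamma)$ along geodesics issuing from a critical point) applied to $u_3$ then forces $\Ric(\partial_t,\partial_t)$ to be a constant, i.e.\ $\psi_1'''=-\frac{C_3}{n-1}\psi_1'$, whence $\psi_1'\propto\sin(\pi t/b)$ and $g$ is round. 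You instead separate variables: project $v_2$ onto fibrewise spherical harmonics, reduce to degrees $m\le 1$, and extract the scalar ODE $ff''-(f')^2+1=0$, which integrates to $f=\frac1a\sin(ar)$. Your route avoids the auxiliary function $u_3$ and the curvature-symmetry Property B entirely and produces the warping function directly; the paper's route avoids the harmonic decomposition and all regularity bookkeeping at the poles.

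One concrete caveat: your reduction ``the fibre-traceless Hessian vanishes, hence only $m\le1$ survives'' is vacuous when $n=2$, since the fibre $S^{n-1}$ is then one-dimensional and every symmetric $2$-tensor on it is pure trace, so that step as stated does not eliminate higher modes. The slip is fixable inside your own scheme: for a degree-$m$ component the mixed $rt$-equation still gives $h=cf$, and reinsertion gives $c\bigl(ff''-(f')^2+m^2\bigr)=0$; since $(f')^2-Cf^2$ is constant and $f(0^+)=0$, $f'(0^+)=1$, a nonzero $c$ forces $m^2=1$ (equivalently, $h=cf\sim cr$ with an $m\ge2$ angular factor is not even $C^1$ at the pole). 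With that adjustment, and a word justifying that the fibrewise projections of $v_2$ are smooth solutions (average over the isometric $O(n)$-action, which fixes the poles), your proof is complete and is a legitimate alternative to the paper's proof of Lemma \ref{clam}.
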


The proof of this theorem depends on the following lemma.
For a related work, see \cite[Proposition 7.3]{T2}.
\begin{Lem}\label{clam}
Let $(M,g)$ be a closed Riemannian manifold of dimension $n$.
If there are linearly independent functions $u_1,u_2\in C^\infty(M) \cap H$ such that $\nabla^2 u_i=-\frac{\Delta u_i}{n}g\,(i=1,2)$, then $(M,g)$ is isometric to the standard sphere of a certain radius.
\end{Lem}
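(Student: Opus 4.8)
The plan is to reduce the statement to a single ordinary differential equation for the warping function of a rotationally symmetric metric on $S^n$, exploiting that a \emph{second} concircular function cannot coexist with such a metric unless it is round. A non-constant function $u$ with $\nabla^2 u = -\tfrac1n(\Delta u) g$ is a concircular scalar field, and the structure theory of Tashiro for closed manifolds carrying such a $u$ (cf. Theorem \ref{Tas} and its proof, reviewed in section 3; alternatively one derives it directly from the fact that the level sets of $u$ are totally umbilic and $|\nabla u|$ is constant on them) gives that $(M,g)$ is isometric to a warped product $g = dt^2 + q(t)^2 g_{S^{n-1}(1)}$ over an interval $[0,L]$, with $q>0$ on $(0,L)$, $q(0)=q(L)=0$, $q'(0)=1$, $q'(L)=-1$, and with $u$ a function of $t$ alone. (For $n=1$ the lemma is trivial, so assume $n\ge 2$.) First I would note that $u_1,u_2$ are non-constant: a constant function in $H$ vanishes, so linear independence forces $u_1,u_2\ne 0$, and a nonzero mean-zero function cannot be constant. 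Apply the structure theorem to $u_1$ to obtain coordinates $(t,\theta)$ as above with $u_1=u_1(t)$.

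Next I would expand $u_2=\sum_{\ell\ge 0}a_\ell(t)Y_\ell$ in spherical harmonics on the $S^{n-1}(1)$ factor, with $\Delta_{S^{n-1}}Y_\ell=\ell(\ell+n-2)Y_\ell$, and compute the $tt$-, $t\theta$- and $\theta\theta$-blocks of the traceless Hessian $\nabla^2 u_2+\tfrac1n(\Delta u_2)g$ in these coordinates. The equation $\nabla^2 u_2=-\tfrac1n(\Delta u_2)g$ then decouples mode by mode. The modes with $\ell\ge 2$ must vanish: for $n\ge 3$ the $\theta\theta$-block multiplies $a_\ell(t)$ by the traceless part of the intrinsic Hessian of $Y_\ell$, which is not identically zero when $\ell\ge 2$, so $a_\ell\equiv 0$; for $n=2$ the blocks force $(q')^2=\ell^2+A_\ell q^2$ for a constant $A_\ell$, incompatible with $q'(0)=1$ when $\ell\ge 2$. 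The $\ell=0$ part is a function of $t$, hence (since the mean-zero radial concircular functions form a one-dimensional space spanned by $u_1$) a scalar multiple of $u_1$; as $u_1,u_2$ are linearly independent, some first-harmonic component $a_1(t)Y_1$ of $u_2$ is nontrivial.

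Finally, substituting $a_1(t)Y_1$ into the concircular equation and using $\bar\nabla^2 Y_1=-Y_1\, g_{S^{n-1}(1)}$ (the defining property of a first harmonic on the unit sphere, also valid for $n=2$), the $t\theta$-block gives $a_1=cq$ with $c\ne 0$, while the $tt$- and $\theta\theta$-blocks both collapse to
\[
q\,q'' - (q')^2 + 1 = 0 ,
\]
with first integral $(q')^2 = 1 + Aq^2$ for a constant $A$. If $A\ge 0$ then $q'$ never vanishes, so $q$ is strictly increasing on $(0,L)$ and cannot satisfy $q(L)=0$; hence $A=-1/R^2<0$, so $q(t)=R\sin(t/R)$ and $L=\pi R$, i.e.\ $g=dt^2+R^2\sin^2(t/R)\, g_{S^{n-1}(1)}$ is the round metric of $S^n(R)$. (Equivalently, $q''=-c^2 q$ from the first integral shows that a constant shift of $u_1$ satisfies $\nabla^2 u_1=-c^2 u_1 g$, and Obata's theorem from the introduction concludes.) I expect the main obstacle to be the mode-by-mode bookkeeping: verifying that the traceless-Hessian equation decouples across spherical harmonics and eliminating the $\ell\ge 2$ modes, where the argument genuinely differs between $n=2$ and $n\ge 3$.
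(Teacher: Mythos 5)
Your proposal is correct, but it takes a genuinely different route from the paper. The paper also starts from Tashiro's structure theorem applied to $u_1$ (Property A), but then argues geometrically: it first shows the critical points of $u_2$ are disjoint from those of $u_1$, builds from $u_1,u_2$ a third concircular function $u_3$ whose critical point lies on the slice $\{t_1\}\times S^{n-1}$ where $\psi_1''(t_1)=0$ (hence totally geodesic), and uses the rotational symmetry of Ricci curvature along normal geodesics emanating from a critical point (Property B) for $u_3$ together with the warped-product formula $\Ric(\partial_t,\partial_t)=-(n-1)\psi_1'''/\psi_1'$ to conclude that the radial Ricci curvature is a constant, forcing $\psi_1'(t)=C\sin(\pi t/b)$ and hence the round metric. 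You instead separate variables: expand $u_2$ in spherical harmonics in the warped-product coordinates supplied by $u_1$, kill the modes $\ell\ge 2$, identify the radial mode with a multiple of $u_1$, and extract a nontrivial $\ell=1$ mode, which yields $q\,q''-(q')^2+1=0$ and $q=R\sin(t/R)$ (or, equivalently, Obata's equation for a shift of $u_1$). Your computations check out, including the $t\theta$-block giving $a_1=cq$ and the exclusion of $A\ge0$ via $q(L)=0$; the mode decoupling you flag is legitimate (the harmonic projections are orthogonal also at the level of gradients and Hessians on the round $S^{n-1}$). What each approach buys: the paper's avoids the harmonic bookkeeping and any $n=2$ versus $n\ge3$ case split, at the cost of Properties A(v), B and the auxiliary function $u_3$; yours is more self-contained at the ODE level and directly produces the warping function. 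A small streamlining of your argument: the $tt$- and $t\theta$-blocks alone give, for a nontrivial mode $\ell$, $a_\ell=c_\ell q$ and $(q')^2=\frac{\ell(\ell+n-2)}{n-1}+A_\ell q^2$, and $q'(0)=1$ then forces $\ell=1$ for every $n\ge2$, so your $n=2$ first-integral trick already handles all dimensions and you do not need the traceless-Hessian-of-$Y_\ell$ argument at all.
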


Before giving a proof of Lemma \ref{clam}, we recall Tashiro's work \cite{T1}.
First we enumerate the properties of the equation  $\nabla^2 u=-\frac{\Delta u}{n}g$.
\begin{Cha}
If there exists a non-constant function $u\in C^\infty(M)$ such that $\nabla^2 u=-\frac{\Delta u}{n}g$, then $(M,g)$ and $u$ have the following properties:
\begin{itemize}
\item[(i)] $u$ has just two critical points $\{p,q\}$ such that $u(p)<u(q)$.
\item[(ii)] There exist a constant $b>0$, a smooth function $\psi \colon [0,b]\to \mathbb{R}$ and a diffeomorphism $\alpha \colon M\backslash\{p,q\}\to(0,b)\times S^{n-1}$ such that
$u(\alpha^{-1}(t,x))=\psi(t)\,((t,x)\in (0,b)\times S^{n-1})$ holds,
\item[(iii)] For any $t\in(0,b)$, we have $\psi'(t)>0$. For any positive integer $k\in \mathbb{Z}_{>0}$, $\psi^{(2k-1)}(0)=\psi^{(2k-1)}(b)=0$ holds. Moreover, $\psi''(0)=-\psi''(b)>0$ holds,
\item[(iv)] Under the diffeomorphism $\alpha$, the metric $g$ on $M\backslash\{p,q\}$ is represented as $g(t,x)=dt^2 + \frac{{\psi'(t)}^2}{{\psi''(0)}^2}g_{n-1}(x)$, where $g_{n-1}$ denotes the standard metric on the sphere of radius $1$,
\item[(v)] Define $P_1 \colon M\backslash\{q\}\to B^n(b)$ and $P_2\colon M\backslash\{p\}\to B^n(b)$ by $P_1(\alpha^{-1}(t,x))=t x$, $P_1(p)=0$ and $P_2(\alpha^{-1}(t,x))=(b-t) x$, $P_2(q)=0$ respectively, where $B^n(b)$ denotes the open ball of radius $b$ in $n$-dimensional Euclidean space with center $0$.
Then, $P_1$ (resp. $P_2$) is the geodesic coordinate centered at $p$ (resp. $q$).
\end{itemize}
\end{Cha}
In particular, if there exists a non-constant function $u\in C^\infty(M)$ such that $\nabla^2 u=-\frac{\Delta u}{n}g$, then $(M,g)$ is diffeomorphic to $S^n$ and has rotational symmetry.
Let us explain why $(M,g)$ is conformal to the standard sphere.
Fix $t_0 \in (0,b)$ and put a map $\theta\colon (0,b)\to(0,\pi)$ to be $\theta(t)=2 \arctan \exp \int^t_{t_0} \frac{\psi''(0)}{\psi'(s)}\,ds$.
By (iii), $\psi'(s)=O(s)$ near $s=0$, and so $\lim_{t\to 0}\theta(t)=0$.
Similarly, we have $\lim_{t\to b}\theta(t)=\pi$; therefore $\theta$ is a diffeomorphism.
We have 
$\frac{d\theta}{dt}=\sin \theta \frac{\psi''(0)}{\psi'(t)}$, and so $g(t,x)=\frac{1}{\sin^2\theta}\frac{\psi'(t)^2}{\psi''(0)^2}(d\theta^2 + \sin^2 \theta\, g_{n-1})$.
We know that $d\theta^2 + \sin^2 \theta \, g_{n-1}$ is the standard metric on the sphere of radius $1$.
Moreover, $\frac{1}{\sin^2\theta}\frac{\psi'(t)^2}{\psi''(0)^2}$ can be extended to a smooth function on the whole $M$.
This implies that $(M,g)$ is conformal to the standard sphere.

By the formulas for a warped product \cite[Lemma 7.3]{bo} \cite[Lemma 13]{Kh}, we have
\begin{formula}
Let $X$ and $Y$ be vector fields on $M\backslash\{p,q\}$ that are orthogonal to $\frac{\partial}{\partial t}$.
\begin{itemize}
\item[(a)] $\nabla_X Y(t,x)=-g(X,Y)\frac{\psi''(t)}{\psi'(t)}\frac{\partial}{\partial t}+\nabla^{S^{n-1}}_X Y$,
\item[(b)] $R^M(X,\frac{\partial}{\partial t})\frac{\partial}{\partial t}=-\frac{\psi'''(t)}{\psi'(t)}X$, 
\item[(c)] $\Ric(\frac{\partial}{\partial t},\frac{\partial}{\partial t})(t)=-(n-1)\frac{\psi'''(t)}{\psi'(t)}$,
\end{itemize}
where $R^M$ denotes the Riemann curvature tensor on $(M,g)$.
\end{formula}
By (a), if $t_1\in(0,b)$ satisfies $\psi''(t_1)=0$, then the second fundamental form of the embedding $\{t_1\}\times S^{n-1}\subset M$ is equal to $0$.
(b) immediately implies (c).
By (c), $(M,g)$ has the following property:
\begin{Ch}
For any normal geodesics $\gamma_1(t)$ and $\gamma_2(t)$ from $p\in M$, we have 
$\Ric(\dot{\gamma_1}(t),\dot{\gamma_1}(t))=\Ric(\dot{\gamma_2}(t),\dot{\gamma_2}(t))$.
\end{Ch}

Now, we are in position to prove Lemma \ref{clam}.
\begin{proof}[Proof of Lemma \ref{clam}]
We use the notations of Property A for $u_1$: critical points $\{p,q\}$, a constant $b>0$, a diffeomorphism $\alpha \colon M\backslash\{p,q\}=(0,b)\times S^{n-1}$, and a map $\psi_1\colon [0,b]\to\mathbb{R}$ such that $u_1(\alpha^{-1}(t,x))=\psi_1(t)\, ((t,x)\in (0,b)\times S^{n-1})$.
By Property A (iii), we have $\psi'_1(0)=\psi'_1(b)=0$, and so there exists $t_1\in(0,b)$ such that $\psi''_1(t_1)=0$.

Let $p_0$ be one of critical points of $u_2$.
Then, there exists a smooth map $\psi_2\colon [0,b]\to \mathbb{R}$ such that $u_2(y)=\psi_2(d(p_0,y))$.
We extend $\psi_2\colon [-b,2b]\to \mathbb{R}$ by 
\begin{equation*}
\psi_2(-t)=\psi_2(t),\psi_2(b+t)=\psi_2(b-t)\quad(t\in[0,b]).
\end{equation*}

Let us show $p\neq p_0$ by contradiction.
Suppose that $p=p_0$.
Then, by Property A (iv), we have $\frac{\psi'_1(t)^2}{\psi''_1(0)^2}=\frac{\psi'_2(t)^2}{\psi''_2(0)^2}$.
Therefore, there exists a constant $C_1\in \mathbb{R}$ such that $\psi'_1(t)=C_1\psi'_2(t)$.
Thus, there exists a constant $C_2\in \mathbb{R}$ such that $\psi_1(t)=C_1\psi_2(t)+C_2$.
Then, we have $C_2=u_1-C_1u_2\in H$, and so $C_2=0$. This is contradiction to the assumption of the linearly independence of $u_1$ and $u_2$.
Therefore, we have $p\neq p_0$. Similarly, we have $q\neq p_0$.

We put $p_0=(t_0,x_0)\in M\backslash\{p,q\}$.
We construct a non-constant function $u_3$ such that $\nabla^2 u_3=-\frac{\Delta u_3}{n}g$ and $\alpha^{-1}(t_1,x_0)$ is one of critical points of $u_3$.
If $t_0=t_1$, we put $u_3=u_2$.
If $t_0\neq t_1$, we put $u_3=\psi'_2(t_1-t_0)u_1-\psi'(t_1)u_2$.
Since $\psi'_1(t_1)\neq0$, we have $u_3\neq 0$.
We have
\begin{equation*}
u_3(\alpha^{-1}(t,x_0))=\psi'_2(t_1-t_0)\psi_1(t)-\psi'(t_1)\psi_2(t-t_0),
\end{equation*}
and so $\frac{\partial}{\partial t}u_3(\alpha^{-1}(t_1,x_0))=0$.
For any tangent vector $X\in T_{\alpha^{-1}(t_1,x_0)}M$ that is orthogonal to $\left(\frac{\partial}{\partial t}\right)_{(t_1,x_0)}$,
we have $Xu_1(\alpha^{-1}(t_1,x_0))=Xu_2(\alpha^{-1}(t_1,x_0))=0$, and so $Xu_3(\alpha^{-1}(t_1,x_0))=0$.
Therefore, $\alpha^{-1}(t_1,x_0)$ is one of critical points of $u_3$.
In both cases, $u_3$ has desired property.

Since $\psi''_1(t_1)=0$, the second fundamental form of the embedding $\{t_1\}\times S^{n-1} \subset M$ is equal to $0$.
Therefore, for any normal geodesic $\gamma(s)$ in $S^{n-1}(1)$ from $x_0\in S^{n-1}$, $\tilde{\gamma}(s)=\alpha^{-1}(t_1,\gamma(\frac{\psi''_1(0)}{\psi'_1(t_1)}s))\in M$ is a normal geodesic in $M$ from $\alpha^{-1}(t_1,x_0)$.
By the symmetry, there exists a constant $C_3\in \mathbb{R}$ such that
\begin{equation*}
\Ric(\dot{\tilde{\gamma}}(s),\dot{\tilde{\gamma}}(s))=C_3
\end{equation*}
holds.
\begin{center}
\begin{tikzpicture}
\fill (0,0) circle (1.4pt);
\fill (2.5,1.3) circle (1.4pt);
\fill (4,0) circle (1.4pt);
\fill (2.5,-1.3) circle (1.4pt);
\draw (0,0)
to [out=90,in=180] (2.5,1.3)
to [out=0,in=90](4,0)
to [out=270,in=0](2.5,-1.3)
to [out=180,in=270](0,0);
\draw[->] (2.5,-1.3)
to [out=60,in=270] (2.8,0);
\draw (2.8,0)
to [out=90,in=300] (2.5,1.3);
\node at (-0.2,0){$p$};
\node at (4.2,0){$q$};
\node at (2.3,1.6){$\alpha^{-1}(t_1,-x_0)$};
\node at (2.3,-1.6){$\alpha^{-1}(t_1,x_0)$};
\node at (-1,-1){$\alpha^{-1}((0,b)\times\{x_0\})$};
\node at (-1,1.1){$\alpha^{-1}((0,b)\times\{-x_0\})$};
\node at (2.5,0){$\tilde{\gamma}$};
\end{tikzpicture}
\end{center}
By Property B for $u_3$, we have 
\begin{equation}\label{const}
-(n-1)\frac{\psi'''_1}{\psi'_1}=\Ric\left(\frac{\partial}{\partial t},\frac{\partial}{\partial t}\right)=\Ric(\dot{\tilde{\gamma}}(s),\dot{\tilde{\gamma}}(s))=C_3.
\end{equation}

By (\ref{const}) and Property A (iii), there exists a constant $C_4$ such that $\psi'_1(t)=C_4 \sin (\frac{\pi}{b}t)$.
Therefore, the metric $g$ on $M$ is represented as 
\begin{equation*}
g(t,x)=dt^2 +\frac{b^2}{\pi^2}\sin^2(\frac{\pi}{b}t)g_{n-1}.
\end{equation*}
Putting $\theta=\frac{\pi}{b}t$, we get $g=\frac{b^2}{\pi^2}(d\theta^2+ \sin^2 \theta \,g_{n-1})$.
This is the standard metric on the sphere of radius $\frac{b}{\pi}$.
\end{proof}
\begin{proof}[Proof of Theorem B]
Lemma \ref{clam} immediately implies the theorem.
\end{proof}

\subsection{An estimate in the presence of a parallel $p$-form}
The goal of this subsection is to prove the following theorem.
\begin{Thm}\label{pfom}
Let \((M,g)\) be a closed Riemannian manifold of dimension $n=2p$ ($p\geq 2$). 
If there exists a nontrivial parallel $p$-form $\omega$ on $M$,
then we have $\Omega_1(g)\leq \frac{p-1}{p}$.
\end{Thm}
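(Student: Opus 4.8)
The plan is to use the parallel $p$-form to split the Hessian of an arbitrary test function into a $(p-1)$-form part and a $(p+1)$-form part, and then to bound each part below via the algebraic refined Kato (``Stein--Weiss'') inequality. The whole point is that the hypothesis $n=2p$ forces the two resulting constants to coincide and equal $\tfrac1p$, which is exactly the factor needed to improve the bound $\|\Delta v\|_{L^2}^2\le n\|\nabla^2v\|_{L^2}^2$ used in the proof of Theorem A (where the constant is $\tfrac1n=\tfrac1{2p}$) to $\|\Delta v\|_{L^2}^2\le p\|\nabla^2v\|_{L^2}^2$; once this is established, the Bochner argument of Theorem A finishes the proof.

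Since $\omega$ is parallel, $|\omega|$ is constant, so after rescaling we may assume $|\omega|\equiv1$. Fix a non-constant $v\in C^\infty(M)$ and set $\beta=\iota_{\nabla v}\omega\in\Omega^{p-1}(M)$ and $\eta=dv\wedge\omega\in\Omega^{p+1}(M)$. Because $\nabla\omega=0$ we have, for an orthonormal frame $\{e_i\}$, $\nabla_{e_i}\beta=\iota_{(\nabla^2v)^\sharp e_i}\omega$ and $\nabla_{e_i}\eta=\bigl((\nabla^2v)^\flat e_i\bigr)\wedge\omega$; combining this with the elementary identity $|\iota_w\omega|^2+|w^\flat\wedge\omega|^2=|w|^2|\omega|^2$ and summing over $i$ gives the pointwise identity
\[
|\nabla\beta|^2+|\nabla\eta|^2=|\nabla^2v|^2 .
\]
Next I would record the first-order identities $d\eta=0$ (indeed $\eta=d(v\omega)$), $\delta\beta=0$ (the symmetry of $\nabla^2v$ kills the skew double contraction), and $\delta\eta=(\Delta v)\,\omega+d\beta$ (using $\operatorname{tr}\nabla^2v=-\Delta v$). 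Writing $(\Delta v)\,\omega=\delta\eta-d\beta$, squaring, integrating, and noting that the cross term $\int_M\langle\delta\eta,d\beta\rangle\,d\mu_g=\int_M\langle\eta,d^2\beta\rangle\,d\mu_g$ vanishes, we obtain
\[
\int_M(\Delta v)^2\,d\mu_g=\int_M|\delta\eta|^2\,d\mu_g+\int_M|d\beta|^2\,d\mu_g .
\]

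The analytic input is the pointwise inequality $|\nabla\gamma|^2\ge\frac1{k+1}|d\gamma|^2+\frac1{n-k+1}|\delta\gamma|^2$ for a $k$-form $\gamma$ on an $n$-manifold, which follows from the $\mathrm{SO}(n)$-decomposition $T^*M\otimes\Lambda^k\cong\Lambda^{k+1}\oplus\Lambda^{k-1}\oplus(\text{Cartan part})$ together with the fact that the orthogonal projections onto the first two summands are represented, up to the constants $k+1$ and $n-k+1$, by $d$ and $\delta$. Applied to $\eta$ with $k=p+1$ (so $d\eta=0$ and $n-k+1=p$) it gives $|\nabla\eta|^2\ge\frac1p|\delta\eta|^2$, and applied to $\beta$ with $k=p-1$ (so $\delta\beta=0$ and $k+1=p$) it gives $|\nabla\beta|^2\ge\frac1p|d\beta|^2$. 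Adding these and using the first displayed identity yields $|\nabla^2v|^2\ge\frac1p\bigl(|\delta\eta|^2+|d\beta|^2\bigr)$ pointwise; integrating and using the second displayed identity gives $\|\nabla^2v\|_{L^2}^2\ge\frac1p\|\Delta v\|_{L^2}^2$. Finally the Bochner computation from the proof of Theorem A gives $\int_M\Ric(\nabla v,\nabla v)\,d\mu_g=\|\Delta v\|_{L^2}^2-\|\nabla^2v\|_{L^2}^2\le\frac{p-1}{p}\|\Delta v\|_{L^2}^2$, so $\Lambda_{\Ric}(v)\le\frac{p-1}{p}$ for every non-constant $v$, whence $\Omega_1(g)\le\frac{p-1}{p}$.

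The routine-but-fiddly part will be the bookkeeping in the form identities: the signs in $\delta=-\iota_{e_i}\nabla_{e_i}$, the wedge/contraction identities, and the verification that $\delta\eta=(\Delta v)\omega+d\beta$. The only genuine subtlety is making sure the Kato-type constants are exactly $1/p$ for both $\eta$ and $\beta$ — and this is precisely where $n=2p$ enters, since it forces $n-(p+1)+1=p=(p-1)+1$, so that the $(p+1)$-form piece and the $(p-1)$-form piece of the Hessian are each controlled by $1/p$ rather than by the generic $1/(2p)$.
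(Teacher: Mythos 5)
Your argument is correct, but it reaches the theorem by a genuinely different route than the paper. The paper reduces to the orientable case via the double cover, then invokes Grosjean's integral identity (\ref{gro}) for $|D_\omega u|^2$ applied to both $\omega$ and $\ast\omega$, and eliminates the form-contractions using the Hodge-star identity (\ref{hod}) $(\iota(X)\omega,\iota(Y)\omega)+(\iota(X)\ast\omega,\iota(Y)\ast\omega)=(X,Y)|\omega|^2$, so the Ricci term is already built into the quoted identity and the conclusion drops out at once. You instead work directly with $\beta=\iota(\nabla v)\omega$ and $\eta=dv\wedge\omega$, prove the pointwise splitting $|\nabla\beta|^2+|\nabla\eta|^2=|\nabla^2v|^2$ (which is the orientation-free form of (\ref{hod})), the identities $d\eta=0$, $\delta\beta=0$, $\delta\eta=(\Delta v)\omega+d\beta$, and then apply the algebraic refined Kato inequality $|\nabla\gamma|^2\geq\frac{1}{k+1}|d\gamma|^2+\frac{1}{n-k+1}|\delta\gamma|^2$, whose two constants both equal $\frac1p$ precisely because $n=2p$; this yields the improved $L^2$ estimate $\|\Delta v\|^2_{L^2}\leq p\,\|\nabla^2v\|^2_{L^2}$, after which the Bochner computation from Theorem A finishes. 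I checked the fiddly points: the sign bookkeeping in $\delta\eta=(\Delta v)\omega+d\beta$ works out with $\Delta v=-\tr\nabla^2v$ and $\delta=-\sum_i\iota(e_i)\nabla_{e_i}$, the cross term $\int_M(\delta\eta,d\beta)\,d\mu_g=\int_M(\eta,dd\beta)\,d\mu_g=0$, and the Kato constants are as you state (orthogonality of the $e_i^\flat\wedge$-image and the $\iota(e_i)$-image in $T^\ast M\otimes\Lambda^k$, with normalizations $k+1$ and $n-k+1$). What your approach buys: it never uses the Hodge star, so the reduction to the orientable double cover at the start of the paper's proof becomes unnecessary, and it is self-contained modulo a standard algebraic fact rather than citing Grosjean's identity; it also isolates a slightly stronger intermediate statement, namely the Hessian--Laplacian inequality with constant $p$ instead of $n=2p$ in (\ref{fundlich}). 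What the paper's approach buys is brevity, since the Ricci term and the integration by parts are packaged inside the cited identity. One cosmetic caveat: your pointwise identity uses the standard pointwise inner product on forms (orthonormal-basis normalization); as in the paper, the constant $|\omega|^2$ factors out of every term, so nontriviality of the parallel form is all that is needed.
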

The proof of this theorem depends on the method used in \cite{gr}.
Before giving the proof, we recall some definitions and properties about $p$-form.
Let $\omega$ be a $p$-form on $M$ and $\{e_i\}_{1\leq i\leq n}$ a local orthonormal frame.
We write $\omega_{i_1\cdots i_p}$ instead of $\omega(e_{i_1},\cdots,e_{i_p})$.
We define the inner product of $p$-forms $\omega$ and $\theta$ by
\begin{equation*}
(\omega,\theta)=\frac{1}{p!}\sum_{1\leq i_1<\ldots<i_p\leq n} \omega_{i_1\cdots i_p} \theta_{i_1\cdots i_p}.
\end{equation*}
The inner product of a $p$-form $\omega$ with the vector field $X$ is the ($p-1$)-form $\iota(X)\omega$ defined by
\begin{equation*}
(\iota(X)\omega)(Y_1,\cdots,Y_{p-1})=\omega(X,Y_1,\cdots,Y_{p-1}),
\end{equation*}
for any vector field $Y_1,\cdots,Y_{p-1}$.
For any vector field $X$, we define a $1$-form $X^\ast$ by
\begin{equation*}
X^\ast(Y)=g(X,Y)
\end{equation*}
for any vector field $Y$.

Suppose that $M$ is orientable.
Then, for any vector field $X$, $p$-form $\omega$ and ($p-1$)-form $\eta$, we have
\begin{align*}
(\ast(\omega \wedge X^\ast),\eta)d\mu_g=\omega \wedge X^\ast \wedge \eta\\
(\iota(X)\ast \omega,\eta)d\mu_g=(\ast \omega,X^\ast\wedge \eta)d\mu_g=\omega\wedge X^\ast \wedge \eta,
\end{align*}
where $\ast$ denotes the Hodge star operator and $d\mu_g$ denotes the volume form on $(M,g)$.
Thus, we have $\ast(\omega \wedge X^\ast)=\iota(X)\ast \omega$.
Therefore, for any vector field $X$, $Y$ and a $p$-form $\omega$, we have
\begin{equation}\label{hod}
\begin{split}
&(\iota (X)\omega,\iota(Y)\omega)\\
=&(\omega,X^\ast \wedge \iota(Y)\omega)\\
=&(\omega,-\iota(Y)(X^\ast \wedge \omega)+(X,Y)\omega)=-(Y^\ast \wedge \omega,X^\ast \wedge \omega)+(X,Y)(\omega,\omega)\\
=&-(\iota(X)\ast \omega,\iota(Y)\ast \omega)+(X,Y)(\omega,\omega)
\end{split}
\end{equation}

For any function $u\in C^\infty(M)$ and $p$-form $\omega$, we define $p$-tensor $D_{\omega}u$ by
\begin{equation*}
\begin{split}
D_{\omega}u=
&\frac{1}{(p-2)!}\sum_{i,i_1,\cdots,i_{p-2}}\Bigl(\iota(e_i)\iota(e_{i_1})\cdots\iota(e_{i_{p-2}})\omega\otimes\nabla_{e_i} du\\
&+\nabla_{e_i}du \otimes \iota(e_i)\iota(e_{i_1})\cdots\iota(e_{i_{p-2}})\omega \Bigr)\otimes (e^{i_1} \wedge\cdots\wedge e^{i_{p-2}}).
\end{split}
\end{equation*}
Put
\begin{equation*}
\begin{split}
(D_{\omega} u)_{jki_1,\cdots,i_{p-2}}&=D_{\omega} u(e_j,e_k,e_{i_1},\cdots,e_{i_{p-2}}),\\
|D_{\omega} u|^2&=\frac{1}{(p-2)!}\sum_{jki_1\cdots i_{p-2}} (D_{\omega} u)^2_{jki_1,\cdots,i_{p-2}}.
\end{split}
\end{equation*}
If $\omega$ is a parallel $p$-form, then by \cite[Proposition 3.1]{gr}, we have
\begin{equation}\label{gro}
\begin{split}
&\frac{1}{p}\int_M |D_{\omega}u|^2 \,d\mu_g\\
=&\frac{p-1}{p}\int_M (\iota(\nabla u)\omega,\iota(\nabla \Delta u)\omega)\,d\mu_g
-\int_M (\iota(\nabla u)\omega,\iota\left(\Ric(\nabla u,\cdot)\right)\omega)\,d\mu_g.
\end{split}
\end{equation}
\begin{proof}[Proof of Theorem \ref{pfom}]
If $M$ is not orientable, we take an orientable double covering $P \colon \widetilde{M}\to M$ and consider the metric $\tilde{g}=P^\ast g$ on $\widetilde{M}$. Suppose $\Omega_1(g)>0$ and take a function $v\in H_{(M,g)}$ such that $\Delta^2 v=\frac{1}{\Omega_1(g)}\nabla^\ast (\Ric_g(\nabla v))$.
Then, we have $\Delta^2 (v\circ P)=\frac{1}{\Omega_1(g)}\nabla^\ast (\Ric_{\tilde{g}}(\nabla (v\circ P)))$.
Therefore, $\Omega_1(g)\leq \Omega_1(\tilde{g})$ holds.
If $\Omega_1(g)=0$, we obviously have $\Omega_1(g)\leq \Omega_1(\tilde{g})$.
Thus, it is suffice to consider the case when $M$ is orientable.

Suppose that $M$ is orientable.
Take a function $u\in C^\infty(M)$.
Since $\omega$ and $\ast \omega$ is parallel, by (\ref{gro}) we have
\begin{equation}\label{gro2}
\begin{split}
0
&\leq\frac{1}{p}\int_M |D_{\omega}u|^2 \,d\mu_g + \frac{1}{p}\int_M |D_{\ast \omega}u|^2 \,d\mu_g\\
&=\frac{p-1}{p}\int_M (\iota(\nabla u)\omega,\iota(\nabla \Delta u)\omega)\,d\mu_g+\frac{p-1}{p}\int_M (\iota(\nabla u)\ast \omega,\iota(\nabla \Delta u)\ast \omega)\,d\mu_g\\
&\quad -\int_M (\iota(\nabla u)\omega,\iota(\Ric(\nabla u,\cdot))\omega)\,d\mu_g-\int_M (\iota(\nabla u)\ast\omega,\iota(\Ric(\nabla u,\cdot))\ast\omega)\,d\mu_g.
\end{split}
\end{equation}
Note that we assumed $n=2p$, and so $\ast \omega$ is a $p$-form.

By (\ref{hod}) and (\ref{gro2}), we obtain
\begin{equation*}
\begin{split}
0
&\leq \frac{p-1}{p}\int_M (\nabla u,\nabla \Delta u)\,d\mu_g|\omega|^2-\int_M \Ric(\nabla u,\nabla u)\,d\mu_g|\omega|^2\\
&=\frac{p-1}{p}\int_M (\Delta u,\Delta u)\,d\mu_g|\omega|^2-\int_M \Ric(\nabla u,\nabla u)\,d\mu_g|\omega|^2.
\end{split}
\end{equation*}

Consequently, for any $v\in H\backslash\{0\}$, we have $\Lambda_{\Ric}(v)\leq\frac{p-1}{p}$.
Therefore, $\Omega_1(g)\leq \frac{p-1}{p}$ holds.
\end{proof}


\subsection{Riemannian manifolds whose value of $\Omega_1$ is close to $\frac{n-1}{n}$}

In this subsection, we give an example of a Riemannian manifold that is far from $S^n$ but whose value of $\Omega_1$ is close to $\frac{n-1}{n}$.
 
Let $(M,g)$ be a $n$-dimensional closed Riemannian manifold with rotational symmetry, i.e., there exists a non-constant function $u\in C^\infty(M)$ such that $\nabla^2 u=-\frac{\Delta u}{n} g$, and so Property A holds.
We use the notations of Property A: critical points $\{p,q\}$, a constant $b>0$, a diffeomorphism $\alpha \colon M\backslash\{p,q\}=(0,b)\times S^{n-1}$, and a map $\psi\colon [0,b]\to\mathbb{R}$ such that $u(\alpha^{-1}(t,x))=\psi(t)\, ((t,x)\in (0,b)\times S^{n-1})$.
We may assume $u(p)=0$.
There exists a positive constant $C_1>0$ such that
\begin{equation*}
|\psi'(t)|\leq C_1,\quad
|\psi''(t)|\leq C_1,\quad
|\psi^{(4)}(t)|\leq C_1.
\end{equation*}
Since $\psi(0)=0$, $\psi'(0)=0$ and $\psi'''(0)=0$, we have 
\begin{equation}\label{epsilon}
|\psi(t)|\leq C_1 t^2,\quad|\psi'(t)|\leq C_1 t,\quad |\psi'''(t)|\leq C_1 t.
\end{equation}

For any $R>0$, we define
\begin{equation*}
B^M(q,R)=\{z \in M : d^M(q,z)<R\}.
\end{equation*}

Take a $n$-dimensional closed Riemannian manifold $(N,g')$ and a point $y_0\in N$.
Let $(U; x^1,\cdots,x^n)$ be a local coordinate centered at $y_0$.
Take a positive real number $L$ such that
\begin{equation*}
\overline{B^n(2L)}\subset U,
\end{equation*}
where $B^n(2L)$ denotes the open ball in $n$-dimensional Euclidean space of radius $2L$.

For any $l\in(0,L)$, define  $\theta_{l} \colon N\to \mathbb{R}$ by
\begin{empheq}[left={\theta_l(y)=\empheqlbrace}]{align*}
&0 &&\quad (y \in \overline{B^n(l)}),\\
0 < &\theta_l(y) \leq 1 && \quad(y\in B^n(2l)\backslash \overline{B^n(l)}),\\
&1 && \quad (y \in N\backslash B^n(2l)).
\end{empheq}

Take a smooth function $\phi \colon \mathbb{R}\to \mathbb{R}$ such that
\begin{empheq}[left={\phi(t)=\empheqlbrace}]{align*}
&0 &&\quad (t < \frac{3}{2}),\\
0 \leq &\phi(t) \leq 1 && \quad(\frac{3}{2}\leq t\leq 2),\\
&1 && \quad (2<t).
\end{empheq}
There exist a positive constant $C_2>0$ such that $\sup_{t\in \mathbb{R}}|\phi'(t)| \leq C_2$.

For any $l\in(0,L)$ and small $\epsilon>0$, there exists a diffeomorphism 
\begin{equation*}
\psi_{(l,\epsilon)} \colon B^M(q,b-\epsilon) \to B^n(2l)\subset N
\end{equation*}
such that $\psi_{(l,\epsilon)}(B^M(q,b-2\epsilon))=B^n(l)$.
Define $\zeta_{\epsilon}, \eta_{\epsilon} \colon M \to \mathbb{R}$ by
\begin{align*}
\zeta_{\epsilon}(z)&=\phi\left(\frac{d^M(z,p)}{\epsilon}\right),\\
\eta_{\epsilon}(z)&=\phi\left(\frac{d^M(z,p)}{2\epsilon}\right).
\end{align*}

For any $r>0$, $l\in (0,L)$ and small $\epsilon>0$, we define a metric on $N$ by
\begin{equation*}
g_{(r,l,\epsilon)}=(\psi_{(l,\epsilon)}^{-1})^\ast(r\zeta_{\epsilon} g)+ \theta_l g'.
\end{equation*}
\begin{center}
\begin{tikzpicture}
\draw (0,0)
to [out=90,in=180] (2,1.4)
to [out=0,in=110](3.85,0.3)
to [out=290,in=100](3.9,0.2)
to [out=280,in=220](4.1,0.2)
to [out=40,in=180](4.3,0.3)
to [out=0,in=90](4.7,0)
to [out=270,in=0](4.3,-0.3)
to [out=180,in=320](4.1,-0.2)
to [out=140,in=80](3.9,-0.2)
to [out=260,in=70](3.85,-0.3)
to [out=250,in=0](2,-1.4)
to [out=180,in=270](0,0);
\draw[densely dashed] (3.85,0.3)
to [out=250,in=110](3.85,-0.3);
\draw (1.6,0)
to [out=60,in=120](2.4,0);
\draw (1.5,0.15)
to [out=290,in=120](1.6,0)
to [out=300,in=240](2.4,0)
to [out=60,in=250](2.5,0.15);
\node at (1.8,0.6){$N\backslash{B^n(l)}$};
\end{tikzpicture}
\end{center}
We put $u_{(l,\epsilon)}=(\psi_{(l,\epsilon)}^{-1})^\ast(\eta_{\epsilon} u)\in C^\infty(N)$.
Then, we have
\begin{equation}\label{lam}
\Lambda_{\Ric_{g_{(r,l,\epsilon)}}}(u_{(l,\epsilon)})=\frac{\int_M \Ric_g (\nabla^g (\eta_{\epsilon} u),\nabla^g (\eta_{\epsilon} u))\,d\mu_g}{\int_M (\Delta^g (\eta_{\epsilon} u))^2 \,d\mu_g}.
\end{equation}

By the formulas of a warped product and (\ref{epsilon}), for some positive constant $C_3>0$, we have
\begin{equation}\label{ue}
\begin{split}
&\left| \int_{B^M(p,4\epsilon)} \Ric_g (\nabla^g (\eta_{\epsilon} u),\nabla^g (\eta_{\epsilon} u))\,d\mu_g\right|\\
=&(n-1)\Vol(S^{n-1}(1))\\
&\qquad\times \quad\left|\int_{3\epsilon}^{4\epsilon} \frac{\psi'''(t)}{\psi'(t)} \left(\frac{1}{2\epsilon}\phi' \left(\frac{t}{2\epsilon} \right)\psi(t)+\phi\left(\frac{t}{2\epsilon}\right)\psi'(t)\right)^2\left(\frac{\psi'(t)}{\psi''(0)}\right)^{n-1} \,dt\right|\\
\leq&C_3\epsilon^{n+2}\to 0 \quad \text{($\epsilon \to 0$)}.
\end{split}
\end{equation}
Similarly, for some positive constant $C_4>0$, we have
\begin{equation}\label{sita}
\left|\int_{B^M(p,4\epsilon)} (\Delta^g (\eta_{\epsilon} u))^2 \,d\mu_g\right|
\leq C_4 \epsilon^n \to 0 \quad \text{($\epsilon \to 0$)}
\end{equation}
Note that $\eta_{\epsilon}u=u$ on $M\backslash B^M(p,4\epsilon)$.
By (\ref{lam}), (\ref{ue}) and (\ref{sita}), we have
\begin{equation*}
\begin{split}
&\frac{n-1}{n}\\
\geq& \Omega_1(g_{(r,l,\epsilon)})\\
\geq& \frac{\int_M \Ric_g (\nabla^g (\eta_{\epsilon} u),\nabla^g (\eta_{\epsilon} u))\,d\mu_g}{\int_M (\Delta^g (\eta_{\epsilon} u))^2 \,d\mu_g}\\
\to &\frac{\int_M \Ric_g (\nabla^g u,\nabla^g u)\,d\mu_g}{\int_M (\Delta^g u)^2 \,d\mu_g}=\frac{n-1}{n},
\end{split}
\end{equation*}
as $\epsilon \to 0$.
Therefore, we have the following theorem.

\begin{Thm}\label{sph}
The above notations are preserved.
For any sequence $\{(r_i,l_i,\epsilon_i)\}_{i\in \mathbb{N}}$ such that $\epsilon_i \to 0$ as $i\to\infty$,
we have $\lim_{i\to \infty}\Omega_1(g_{(r_i,l_i,\epsilon_i)})=\frac{n-1}{n}$.
\end{Thm}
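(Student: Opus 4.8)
The plan is to sandwich $\Omega_1(g_{(r_i,l_i,\epsilon_i)})$ between $\frac{n-1}{n}$ from above and a quantity converging to $\frac{n-1}{n}$ from below, so that the limit is forced. First I would invoke Theorem A, which applies to any closed Riemannian manifold regardless of the parameters, to get the uniform upper bound $\Omega_1(g_{(r_i,l_i,\epsilon_i)}) \leq \frac{n-1}{n}$ for every $i$.

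For the lower bound I would use the explicit test function $u_{(l,\epsilon)}=(\psi_{(l,\epsilon)}^{-1})^\ast(\eta_\epsilon u)$. Since $\Omega_1$ is the supremum of $\Lambda_{\Ric}$ over all non-constant functions, it suffices to bound $\Lambda_{\Ric_{g_{(r,l,\epsilon)}}}(u_{(l,\epsilon)})$ from below. The crucial observation is that $u_{(l,\epsilon)}$ is supported where $\eta_\epsilon\neq 0$, i.e.\ where $d^M(\cdot,p)>3\epsilon$; there $\zeta_\epsilon\equiv 1$, so on the support of the test function the metric $g_{(r,l,\epsilon)}$ is just the homothety $r\,g$ transported by the diffeomorphism $\psi_{(l,\epsilon)}$. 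Because $\Lambda_{\Ric}$ is invariant under homotheties (via the scaling relations $\nabla^{a^2 g}u=\frac1{a^2}\nabla^g u$, $\Delta^{a^2 g}u=\frac1{a^2}\Delta^g u$, $\Ric_{a^2 g}=\Ric_g$, $d\mu_{a^2 g}=a^n d\mu_g$ recorded earlier), this quotient equals the quotient computed on $(M,g)$ with test function $\eta_\epsilon u$, which is exactly formula (\ref{lam}).

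Next I would let $\epsilon\to 0$. Away from $B^M(p,4\epsilon)$ one has $\eta_\epsilon u=u$, and since $u$ is non-constant the denominator $\int_M (\Delta^g u)^2\,d\mu_g$ is a fixed positive number. Using the warped-product formulas together with the Taylor bounds (\ref{epsilon}) on $\psi$ near the critical point $p$, the contributions of the shrinking ball $B^M(p,4\epsilon)$ to numerator and denominator are $O(\epsilon^{n+2})$ and $O(\epsilon^{n})$ respectively, hence vanish in the limit — this is precisely (\ref{ue}) and (\ref{sita}). Consequently $\Lambda_{\Ric_{g_{(r_i,l_i,\epsilon_i)}}}(u_{(l_i,\epsilon_i)})\to \frac{\int_M \Ric_g(\nabla^g u,\nabla^g u)\,d\mu_g}{\int_M (\Delta^g u)^2\,d\mu_g}$.

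Finally I would identify this limit with $\frac{n-1}{n}$: since $(M,g)$ has rotational symmetry, $u$ satisfies $\nabla^2 u=-\frac{\Delta u}{n}g$, which is exactly the equality case of the inequality $\|\Delta v\|_{L^2}^2\leq n\|\nabla^2 v\|_{L^2}^2$ used in the proof of Theorem A; running the Bochner computation there with equality gives $\Lambda_{\Ric_g}(u)=\frac{n-1}{n}$. Combining the three displays, $\frac{n-1}{n}\geq \Omega_1(g_{(r_i,l_i,\epsilon_i)})\geq \Lambda_{\Ric_{g_{(r_i,l_i,\epsilon_i)}}}(u_{(l_i,\epsilon_i)})\to \frac{n-1}{n}$, and the squeeze finishes the proof. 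The main technical obstacle is the pair of local estimates near $p$: one must check that differentiating the cutoff $\phi(d^M(\cdot,p)/2\epsilon)$ produces factors of order $\epsilon^{-1}$ that are overwhelmed by the vanishing of $\psi$, $\psi'$, $\psi'''$ encoded in (\ref{epsilon}), so that the cutoff error genuinely disappears; everything else is the homothety invariance of $\Omega_1$ and the sandwich argument.
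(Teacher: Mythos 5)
Your proposal is correct and follows essentially the same route as the paper: the uniform upper bound from Theorem A, the lower bound via the test function $u_{(l,\epsilon)}$ together with homothety/diffeomorphism invariance giving (\ref{lam}), the cutoff estimates (\ref{ue}) and (\ref{sita}) near $p$, and the identification of the limiting quotient with $\frac{n-1}{n}$ from $\nabla^2 u=-\frac{\Delta u}{n}g$ via the Bochner computation. No gaps to report.
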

\begin{Rem}
For any sequence $\{(r_i,l_i,\epsilon_i)\}_{i\in \mathbb{N}}$ such that $(r_i,l_i,\epsilon_i) \to (0,0,0)$ as $i\to\infty$, we have $(N,g_{(r_i,l_i,\epsilon_i)})\to(N,g')$ in the Gromov-Hausdorff topology.
However, $\lim_{i\to \infty}\Omega_1(g_{(r_i,l_i,\epsilon_i)})=\frac{n-1}{n}$ is not necessarily $\Omega_1(g')$.
\end{Rem}
\begin{Cor}
For any closed manifold $N$ of dimension $n$, we have
\begin{equation*}
\sup_{g\in \Met(N)}\Omega_1(g)=\frac{n-1}{n},
\end{equation*}
where $\Met(N)$ denotes the set of all Riemannian metrics on $N$.
\end{Cor}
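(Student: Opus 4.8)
The plan is to combine the upper bound of Theorem~A with the explicit family of metrics constructed just before Theorem~\ref{sph}. By Theorem~A, every $g \in \Met(N)$ satisfies $\Omega_1(g) \le \frac{n-1}{n}$, so $\sup_{g \in \Met(N)} \Omega_1(g) \le \frac{n-1}{n}$ with nothing further to check.

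For the reverse inequality I would produce metrics on $N$ whose value of $\Omega_1$ approaches $\frac{n-1}{n}$. The construction preceding Theorem~\ref{sph} takes as input \emph{any} closed $n$-dimensional Riemannian manifold $(M,g)$ admitting a non-constant $u$ with $\nabla^2 u = -\tfrac{\Delta u}{n}g$ (so that Property~A applies), together with \emph{any} closed $n$-manifold $N$, a point $y_0 \in N$, a coordinate chart $(U; x^1,\dots,x^n)$ centered at $y_0$, and a radius $L$ with $\overline{B^n(2L)} \subset U$. A seed manifold $(M,g)$ with the required function always exists --- the round sphere $S^n(1)$ itself will do --- and any closed $N$ admits a coordinate chart, so the construction can be carried out on the given $N$ irrespective of its topology. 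Fixing such data and choosing any admissible $r>0$ and $l \in (0,L)$, I would take a sequence $\epsilon_i \to 0$ and invoke Theorem~\ref{sph} to get $\lim_{i\to\infty}\Omega_1(g_{(r,l,\epsilon_i)}) = \frac{n-1}{n}$. Since each $g_{(r,l,\epsilon_i)}$ lies in $\Met(N)$, this gives $\sup_{g \in \Met(N)} \Omega_1(g) \ge \frac{n-1}{n}$, and combining the two bounds yields equality.

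I do not expect any genuine obstacle here: the analytic heart of the argument --- the warped-product curvature estimates $(\ref{ue})$ and $(\ref{sita})$ that kill the contributions near the collapsing cap $B^M(p,4\epsilon)$, and the invariance $\Lambda_{\Ric_{g_{(r,l,\epsilon)}}}(u_{(l,\epsilon)}) = \Lambda_{\Ric_g}(\eta_\epsilon u)$ recorded in $(\ref{lam})$ --- is already contained in Theorem~\ref{sph}. The one point that deserves to be stated explicitly, and which makes the corollary work for an arbitrary $N$, is that the rotationally symmetric seed $(M,g)$ in that construction may be chosen completely independently of $N$; once this is observed, the proof is just the assembly of Theorem~A and Theorem~\ref{sph}.
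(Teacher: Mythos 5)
Your proposal is correct and is exactly the argument the paper intends: the upper bound is Theorem A, and the lower bound comes from applying the gluing construction of subsection 3.4 (with any rotationally symmetric seed, e.g.\ the round sphere) to the given $N$ and invoking Theorem \ref{sph}. No further comment is needed.
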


\section{Computation and examples}


\subsection{The product of Einstein manifolds}
In general, it is difficult to calculate $\Omega_k$ for a product Riemannian manifold.
However, we have the following theorem.
\begin{Thm}\label{produ}
Let $(M_i,g_i)$ be closed Einstein manifolds: $\Ric_{g_i}=a_i g_i$.
Consider the product metric $g=g_1 + g_2$ on $M=M_1 \times M_2$.
Assume that $a_1>0$.
Then, we have
\begin{equation*}
\{\Omega_l(g):l\in \mathbb{Z}_{>0}\}=\left\{\frac{(a_1\lambda_i+a_2\lambda'_k)}{(\lambda_i+\lambda'_k)^2}:(i,k)\neq(0,0) \text{ and } \frac{(a_1\lambda_i+a_2\lambda'_k)}{(\lambda_i+\lambda'_k)^2}>0\right\},
\end{equation*}
where $\{\lambda_i\}$ (resp. $\{\lambda'_k\}$) are the eigenvalues of the Laplacian of $(M_1,g_1)$ (resp. $(M_2,g_2)$).
\end{Thm}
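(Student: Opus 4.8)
The plan is to produce an explicit complete orthonormal system of $H_{(M,g)}$ each of whose members solves an equation of the form $c\,\Delta^2u=\nabla^\ast(\Ric_g(\nabla u,\cdot))$, and then to read the spectrum off from Corollary \ref{koyuti1}. Let $\{\psi_i\}$ and $\{\psi'_k\}$ be complete orthonormal systems of eigenfunctions of $\Delta^{g_1}$ and $\Delta^{g_2}$, with eigenvalues $\lambda_i$ and $\lambda'_k$, and set $u_{(i,k)}(x,y)=\psi_i(x)\psi'_k(y)$. These functions are smooth, form a complete orthonormal system of $L^2(M)$ by Fubini, satisfy $\Delta_M u_{(i,k)}=(\lambda_i+\lambda'_k)u_{(i,k)}$, and for $(i,k)\neq(0,0)$ lie in $H$ (since then $\int_M u_{(i,k)}\,d\mu_g=0$). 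Normalising, $\widehat u_{(i,k)}:=(\lambda_i+\lambda'_k)^{-1}u_{(i,k)}$ satisfies $\|\Delta\widehat u_{(i,k)}\|_{L^2}=1$ and $\langle\widehat u_{(i,k)},\widehat u_{(i',k')}\rangle_H=\delta_{ii'}\delta_{kk'}$. I would check that $\{\widehat u_{(i,k)}\}_{(i,k)\neq(0,0)}$ is complete in $H$ by noting that $\Span_{\mathbb{R}}\{u_{(i,k)}\}$ is dense in $L^2_2(M)$ — it already contains a full spectral resolution of $\Delta_M$ — and that the $H$-norm is equivalent to the $L^2_2$-norm on $H$, so density in $L^2_2$ gives density in $H$; discarding $u_{(0,0)}$, which is a constant, cuts the system down to $H$.

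The computational heart is the identity $\nabla^\ast\bigl(\Ric_g(\nabla u_{(i,k)},\cdot)\bigr)=(a_1\lambda_i+a_2\lambda'_k)\,u_{(i,k)}$. On the product, $\Ric_g=a_1g_1\oplus a_2g_2$, and $\nabla u_{(i,k)}=\psi'_k\,\nabla^{g_1}\psi_i+\psi_i\,\nabla^{g_2}\psi'_k$, the two summands being tangent to the $M_1$- and $M_2$-directions respectively; hence $\Ric_g(\nabla u_{(i,k)},\cdot)=a_1\psi'_k\,d_x\psi_i+a_2\psi_i\,d_y\psi'_k$. Applying $\nabla^\ast(\varphi\,\omega)=\varphi\,\nabla^\ast\omega-\langle d\varphi,\omega\rangle$, using that $d_x\psi_i$ and $d\psi'_k$ are orthogonal in the product metric (one is horizontal, the other vertical), and that $\nabla^\ast d_x\psi_i=\Delta_M\psi_i=\lambda_i\psi_i$ and $\nabla^\ast d_y\psi'_k=\lambda'_k\psi'_k$, the cross terms drop out and the identity follows. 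Since $\Delta^2u_{(i,k)}=(\lambda_i+\lambda'_k)^2u_{(i,k)}$, this gives
\begin{equation*}
c_{(i,k)}\,\Delta^2u_{(i,k)}=\nabla^\ast\bigl(\Ric_g(\nabla u_{(i,k)},\cdot)\bigr),\qquad c_{(i,k)}=\frac{a_1\lambda_i+a_2\lambda'_k}{(\lambda_i+\lambda'_k)^2}.
\end{equation*}

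To finish, I would observe that $a_1>0$ forces $c_{(1,0)}=a_1/\lambda_1>0$, whence $\Omega_1(g)=\Lambda_1(\Ric_g)\geq\Lambda_{\Ric_g}(\widehat u_{(1,0)})=c_{(1,0)}>0$, and then apply Corollary \ref{koyuti1} to the complete orthonormal system $\{\widehat u_{(i,k)}\}_{(i,k)\neq(0,0)}$ (re-indexed by $\mathbb{Z}_{>0}$) with constants $c_{(i,k)}$: this yields at once $\{\Omega_l(g):l\in\mathbb{Z}_{>0}\}=\{c_{(i,k)}:(i,k)\neq(0,0),\ c_{(i,k)}>0\}$, which is the assertion. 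The only step that is not a routine calculation is the completeness of $\{\widehat u_{(i,k)}\}$ in $H$, for which one invokes the standard description of $\Spec(M_1\times M_2,g)$ as the sums $\lambda_i+\lambda'_k$ with eigenspaces spanned by the $u_{(i,k)}$; I expect this, rather than the Bochner-type computation of $\nabla^\ast(\Ric_g(\nabla u_{(i,k)},\cdot))$, to be the only genuinely delicate point.
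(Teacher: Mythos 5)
Your proposal is correct and follows essentially the same route as the paper: exhibit $\{(\lambda_i+\lambda'_k)^{-1}\psi_i\psi'_k\}_{(i,k)\neq(0,0)}$ as a complete orthonormal system of $H$, verify $\nabla^\ast\bigl(\Ric_g(\nabla(\psi_i\psi'_k),\cdot)\bigr)=(a_1\lambda_i+a_2\lambda'_k)\psi_i\psi'_k=\frac{a_1\lambda_i+a_2\lambda'_k}{(\lambda_i+\lambda'_k)^2}\Delta^2(\psi_i\psi'_k)$, and conclude by Corollary \ref{koyuti1}. The details you supply (completeness in $H$, the codifferential computation on the product, and using $a_1>0$ to guarantee $\Omega_1(g)>0$ so the last assertion of Corollary \ref{koyuti1} applies) are exactly what the paper's terse proof leaves implicit.
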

\begin{proof}
Let $\{\psi_i\}$ (resp. $\{\psi'_k\}$) be the eigenfunctions of the Laplacian of $(M_1,g_1)$ (resp. $(M_2,g_2)$).
Then, $\{\frac{1}{\lambda_i+\lambda'_k} \psi_i \psi'_k\}_{(i,k)\neq(0,0)}$ forms a complete orthonormal system of $H_{(M,g)}$.
Moreover, we have 
\begin{equation*}
\nabla^\ast\left(\Ric_g(\nabla(\psi_i \psi'_k),\cdot)\right)=(a_1\lambda_i+a_2\lambda'_k)\psi_i \psi'_k=\frac{(a_1\lambda_i+a_2\lambda'_k)}{(\lambda_i+\lambda'_k)^2}\Delta^2(\psi_i \psi'_k).
\end{equation*}
Therefore, by Corollary \ref{koyuti1}, we get
\begin{equation*}
\{\Omega_l(g):l\in \mathbb{Z}_{>0}\}=\left\{\frac{(a_1\lambda_i+a_2\lambda'_k)}{(\lambda_i+\lambda'_k)^2}:(i,k)\neq(0,0) \text{ and } \frac{(a_1\lambda_i+a_2\lambda'_k)}{(\lambda_i+\lambda'_k)^2}>0\right\}.
\end{equation*}. 
\end{proof}
\begin{Example}
By using Theorem \ref{produ}, we calculate the value of $\Omega_1$ of the unitary group $U(n)$ with bi-invariant metrics.

We know that $U(n)=(\mathbb{T}\times SU(n))/Z$, where  $\mathbb{T}=\{t\in \mathbb{C}:|t|=1\}=S^1$ and $Z=\{(e^{-2\pi\sqrt{-1} k/n},e^{2\pi\sqrt{-1} k/n})\in \mathbb{T}\times SU(n):k=0,1,\ldots,n-1\}$.
Let $g_0$ be the bi-invariant metric on $SU(n)$ defined by
\begin{equation*}
g_0(X,Y)=-\tr(XY)\quad X,Y\in\mathfrak{su}(n).
\end{equation*}
Then, we have
\begin{equation*}
\Ric_{g_0}=\frac{1}{2}n g_0.
\end{equation*}
Let $\widetilde{G}_r$ be a bi-invariant metric on $\mathbb{T}\times SU(n)$ defined by
\begin{equation*}
\widetilde{G}_r=g_1(r)+ g_0
\end{equation*}
for a positive real number $r\in \mathbb{R}_{>0}$, where $g_1(r)$ is the standard metric on $S^1$ of radius $r$.
Let $G_r$ be a bi-invariant metric on $U(n)$ induced by $\widetilde{G}_r$.
Note that $G_{\sqrt{n}}$ coincides with the metric on $U(n)$ defined by
\begin{equation*}
-\tr(XY) \quad X,Y\in \mathfrak{u}(n).
\end{equation*}

We show that $\Omega_1(G_r)=\max\left\{\frac{\frac{1}{2}n^2(n^2-1)}{\left(n^2+\frac{n}{r^2}-1\right)^2},\frac{1}{4}\right\}$.

To do this, we recall the fact about the eigenvalues of $(SU(n),g_0)$.
We can compute the spectrum of compact Lie groups by a representation theoretic way, see e.g. \cite[Lemma 1.1]{S}.
We only describe some results about $(SU(n),g_0)$.
Let $\{\lambda_k(g_0)\}$ be the eigenvalues of $(SU(n),g_0)$.
Then, $\lambda_1(g_0)=n-\frac{1}{n}$ holds, and the first eigenfunction is not $Z$-invariant on $\mathbb{T}\times SU(n)$.
We can choose the first eigenfunction $\psi_1$ on $(SU(n),g_0)$ such that  $\widetilde{\psi}_1(t,x)=t\psi_1(x)$ ($(t,x)\in \mathbb{T}\times SU(n)$) is $Z$-invariant.
On the other hand, we have
\begin{equation*}
2n=
\min\bigl\{\lambda_k(g_0): k\in\mathbb{Z}_{>0} \text{ and the eigenfunction is $Z$-invariant on }\mathbb{T}\times SU(n)\bigr\}.
\end{equation*}
Let $\widetilde{\psi_2}$ be such an eigenfunction.

By Theorem \ref{produ}, the associated functions (see Definition \ref{def1}) on $\mathbb{T}\times SU(n)$ are of the form
\begin{equation*}
t^k\psi_i(x)\quad(t,x)\in \mathbb{T}\times SU(n)
\end{equation*}
for some integer $k\in\mathbb{Z}$ and some eigenfunction $\psi_i$ on $SU(n)$ such that $\Delta \psi_i=\lambda_i(g_0)\psi_i$.
Then,
\begin{equation*}
\nabla^\ast\left(\Ric_{\widetilde{G}_r}(\nabla(t^k\psi_i),\cdot)\right)=\frac{1}{2}n\lambda_i t^k\psi_i=\frac{\frac{1}{2}n\lambda_i}{(\frac{1}{r^2}k^2+\lambda_i)^2}\Delta^2(t^k\psi_i).
\end{equation*}
Moreover, we have
\begin{equation*}
\Omega_1(G_r)=\max \left\{\Omega_k(\widetilde{G}_r):k\in \mathbb{Z}_{>0}\text{ and the associated function } v_k \text{ is $Z$-invariant} \right\}.
\end{equation*}
Thus, the candidates of $\Omega_1(G_r)$ are $\Lambda_{\Ric_{\widetilde{G}_r}}(\widetilde{\psi}_1)$ and $\Lambda_{\Ric_{\widetilde{G}_r}}(\widetilde{\psi}_2)$.
We have
\begin{align*}
\nabla^\ast\left(\Ric_{\widetilde{G}_r}(\nabla\widetilde{\psi}_1,\cdot)\right)
=&\frac{\frac{1}{2}n(n-\frac{1}{n})}{\left(\frac{1}{r^2}+n-\frac{1}{n}\right)^2}\Delta^2 \widetilde{\psi}_1
=\frac{\frac{1}{2}n^2(n^2-1)}{\left(n^2+\frac{n}{r^2}-1\right)^2} \Delta^2 \widetilde{\psi}_1,\\
\nabla^\ast\left(\Ric_{\widetilde{G}_r}(\nabla\widetilde{\psi}_2,\cdot)\right)
=&\frac{1}{4}\Delta^2 \widetilde{\psi}_2.
\end{align*}
Therefore, we get 
\begin{equation*}
\Omega_1(G_r)=\max\left\{\frac{\frac{1}{2}n^2(n^2-1)}{\left(n^2+\frac{n}{r^2}-1\right)^2},\frac{1}{4}\right\}.
\end{equation*}
\end{Example}


\subsection{The case of Heisenberg manifolds}

In this subsection,  we consider the value of $\Omega_1$ of the Heisenberg manifolds.
We refer to \cite{gw}.
Let $n\in \mathbb{Z}_{>0}$.
For $x,y\in\mathbb{R}^n$ and $t\in\mathbb{R}$, let
\begin{equation}
\gamma(x,y,t)=
\begin{bmatrix}
1 &x &t\\
0 &I_n & {}^t\!y\\
0 &0 &1
\end{bmatrix},\quad
X(x,y,t)=
\begin{bmatrix}
0 &x &t\\
0 &0 & {}^t\!y\\
0 &0 &0
\end{bmatrix},
\end{equation}
where we consider $x,y$ as law vectors.
The Heisenberg group $H_n$ is the $(2n+1)$-dimensional Lie group defined by
\begin{equation*}
H_n=\left\{
\gamma(x,y,t)
\in GL(n+2, \mathbb{R}) : x,y \in \mathbb{R}^n \text{ and } t\in \mathbb{R}
\right\},
\end{equation*}
with Lie algebra
\begin{equation*}
\mathfrak{h}_n=\left\{X(x,y,t)
\in M(n+2, \mathbb{R}) : x,y \in \mathbb{R}^n \text{ and } t\in \mathbb{R}
\right\}.
\end{equation*}
Let $\Gamma$ be a discrete subgroup of $H_n$ such that $M=\Gamma \backslash H_n$ is compact (such a subgroup is called a uniform discrete subgroup).
We call such a manifold $M$ a Heisenberg manifold.
Let $\Le (M)$ be the set of all left invariant metrics on  $M$.
Then, we have the following proposition.
\begin{Prop}\label{heis}
For any Heisenberg manifolds $M=\Gamma \backslash H_n$, we have
\begin{equation*}
\sup_{g\in \Le(M)}\Omega_1(g)=
\begin{cases}
\frac{1}{16}\frac{\sqrt{17}-3}{(\sqrt{17}-1)(\sqrt{17}+3)^2}\quad &n=1\\
\frac{1}{32}\quad &n\geq2.
\end{cases}
\end{equation*}
\end{Prop}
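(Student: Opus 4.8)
The plan is to reduce the computation of $\sup_{g\in\Le(M)}\Omega_1(g)$ to a finite-dimensional optimization problem using the explicit representation-theoretic description of the spectrum of left-invariant metrics on Heisenberg manifolds from \cite{gw}. First I would recall the structure of $\Le(M)$: up to scaling and the action of automorphisms, a left-invariant metric on $\Gamma\backslash H_n$ is determined by finitely many parameters (a positive-definite inner product on $\mathfrak{h}_n$ compatible with the grading, essentially the lengths of the horizontal directions $X(e_i,0,0)$, $X(0,e_j,0)$ and of the center $X(0,0,1)$, together with off-diagonal terms). Since $\Ric$ of such a metric is left-invariant, $\Ric_g = \sum c_\alpha (e^\alpha\otimes e^\alpha)$ in a suitable left-invariant coframe, with the $c_\alpha$ explicit functions of the metric parameters; in particular the center direction is a direction of very negative (or at least controlled) Ricci curvature, which is what forces $\Omega_1$ to be small.

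Next I would invoke Corollary \ref{koyuti1}: if $\{u_l\}$ is a complete orthonormal system of $H$ with $c_l\Delta^2 u_l = \nabla^\ast(\Ric(\nabla u_l,\cdot))$, then $\{\Omega_k(g)\} = \{c_l : c_l>0\}$, so $\Omega_1(g) = \max_l c_l$. The point is that for a left-invariant metric the Laplacian and the operator $v\mapsto \nabla^\ast(\Ric(\nabla v,\cdot))$ both commute with the left $H_n$-action, hence are simultaneously diagonalized by the Peter–Weyl/Kirillov decomposition of $L^2(\Gamma\backslash H_n)$ into the lattice of characters of the central torus together with the infinite-dimensional pieces coming from the non-trivial central characters. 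On each piece the two operators act by matrices in the "energy levels" of a harmonic-oscillator-type operator; concretely, on the functions factoring through the abelianization $\mathbb{R}^{2n}$ (central character trivial) one gets eigenvalues $\lambda = |k|^2$ with respect to the flat horizontal metric and $\nabla^\ast(\Ric(\nabla\cdot,\cdot))$ acting by $\Ric$ contracted against $k$, while on the pieces with central character $2\pi m \neq 0$ one gets the discrete ladder of the sub-Laplacian. Dividing, $\Omega_1(g)$ becomes the supremum over this explicit countable set of ratios, and this supremum is attained at the smallest available "frequency", giving $\Omega_1(g)$ as an explicit rational function of the metric parameters.

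Finally I would carry out the optimization of that rational function over the parameter space, separating the cases $n=1$ and $n\geq 2$ because the multiplicities and the shape of the horizontal block differ (for $n\geq 2$ one can make the horizontal directions large and exploit isotropy to decouple, landing on the clean value $\tfrac{1}{32}$; for $n=1$ the single horizontal pair and the center interact, and calculus in one effective variable produces the $\sqrt{17}$ expression). I expect the main obstacle to be twofold: (1) writing down $\Ric_g$ and the spectrum of $\nabla^\ast(\Ric(\nabla\cdot,\cdot))$ versus $\Delta^2$ correctly for the general left-invariant metric — this requires care with the Koszul formula on the nilpotent Lie algebra and with the normalization of the central character — and (2) verifying that the supremum over all $g\in\Le(M)$ is genuinely achieved (or approached) by the candidate extremum, i.e. that no sequence of degenerating metrics pushes a higher ladder-ratio above it; this is a boundary-behavior check on the rational function as the parameters tend to $0$ or $\infty$, and it is here that the homothety invariance $\Omega_1(a^2g)=\Omega_1(g)$ noted earlier lets me reduce to a compact parameter region and conclude.
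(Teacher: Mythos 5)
Your outline follows the same broad strategy as the paper (reduce to a normal form for left-invariant metrics via the automorphism results of Gordon--Wilson, diagonalize $\Delta^2$ and $v\mapsto\nabla^\ast(\Ric(\nabla v,\cdot))$ simultaneously through the decomposition of $L^2(\Gamma\backslash H_n)$ into the characters $f_\tau$ and the infinite-dimensional representations $\pi_c$, apply Corollary \ref{koyuti1}, and optimize the resulting ratios over the metric parameters). But as it stands the proposal is a plan, not a proof: the entire quantitative core that produces the two stated constants is missing, and several of the guiding heuristics are wrong. First, the sign heuristic is backwards: for the normal form $[X'_i,Y'_i]=d_i^2\sqrt{g_{2n+1}}\,Z'$ one computes (Lemma \ref{linv}) $\Ric(X'_i,X'_j)=\Ric(Y'_i,Y'_j)=-\tfrac12\delta_{ij}d_i^4g_{2n+1}$ and $\Ric(Z',Z')=+\tfrac12\sum_i d_i^4g_{2n+1}$, so the center is the \emph{positive} Ricci direction; consequently the toral pieces $f_\tau$ (your ``abelianization'' pieces) give only \emph{negative} ratios and all positive ratios come from the $\pi_c$, $c\in\mathbb{Z}\setminus\{0\}$, with the harmonic-oscillator ground state optimal. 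Second, the supremum over the central frequency $c$ is not ``at the smallest available frequency'': the optimal $c$ is an explicit function of the metric, and one must reconcile the continuous optimum with the integrality $c\in\mathbb{Z}_{>0}$, which the paper does by tuning $g_{2n+1}$ so that the optimizing value is an integer — a step your sketch does not address.

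The decisive missing idea is the inequality that actually separates $n=1$ from $n\geq2$. Writing $p=\sum d_i^2$, $q=\sum d_i^4$, $r=\sum d_i^6$, the ground-state ratio is bounded by a one-variable function of $X=\sqrt{\tfrac{9}{16}+\tfrac{pq}{2r}}$, namely $\tfrac14\,\frac{X-\frac34}{(X-\frac14)(X+\frac34)^2}$, and Cauchy--Schwarz gives the constraint $1\leq pq/r\leq n$. For $n=1$ this pins $X=\sqrt{17}/4$ and yields the $\sqrt{17}$ value exactly; for $n\geq2$ the function is maximized at $X=\tfrac54$, i.e.\ $pq/r=2$, which is approached only by degenerating metrics with $d_1=d_2$ fixed and $d_i\to0$ for $i\geq3$, so the supremum $\tfrac1{32}$ is a limit and is not attained. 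Your proposed mechanisms — ``make the horizontal directions large and exploit isotropy'' and ``use homothety invariance to reduce to a compact parameter region and conclude'' — would not produce this: homothety invariance only removes an overall scale, the relevant extremum for $n\geq2$ lies on the non-compact boundary of the parameter space, and nothing in the sketch identifies the constraint $1\leq pq/r\leq n$ or explains why the answer changes exactly at $n=2$. Without these computations the stated constants are asserted rather than derived, so the proposal has a genuine gap even though its framework is the correct one.
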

\begin{Rem}
We have
\begin{align*}
\inf\{\lambda_1(g): g\in \Le(M)\text{ and } \vol(M,g)=1\}&=0,\\
\sup\{\lambda_1(g): g\in \Le(M)\text{ and } \vol(M,g)=1\}&=\infty.
\end{align*}
\end{Rem}
Before giving a proof of the proposition, we recall the basic results described in \cite{gw} and some other facts.
\begin{notation}
We put
\begin{equation*}
X_i=X(e_i,0,0),\quad Y_i=X(0,e_i,0),\quad Z=X(0,0,1)
\end{equation*}
for $1\leq i \leq n$, where $\{e_i\}$ denotes the standard basis of $\mathbb{R}^n$.
We put
\begin{align*}
\mathfrak{z}_n&=\{tZ:t\in\mathbb{R}\},\\
\mathfrak{z}_n^\perp&=\Span_{\mathbb{R}}\{X_1,\ldots,X_n,Y_1,\ldots,Y_n\}.
\end{align*}
Note that $\mathfrak{z}_n$ is the center of $\mathfrak{h}_n$, and
\begin{equation*}
[X_i,X_j]=[Y_i,Y_j]=0, \quad [X_i,Y_j]=\delta_{ij}Z
\end{equation*}
for $1\leq i,j \leq n$.
\end{notation}
\begin{notation}
We put
\begin{equation*}
\mathbb{Z}^n_{\dv}=\{r=(r_1,\ldots,r_n)\in\mathbb{Z}^n_{>0}: r_j \text{ divides } r_{j+1} \text{ for } 1\leq j \leq n-1\}.
\end{equation*}
For $r\in\mathbb{Z}^n_{\dv}$, we put
\begin{align*}
r\mathbb{Z}^n&=\{x=(x_1,\ldots,x_n)\in\mathbb{Z}^n: x_i \in r_i \mathbb{Z} \text{ for } 1\leq i \leq n\},\\
\Gamma_r&=\{\gamma(x,y,t)\in H_n:x\in r\mathbb{Z}^n,y\in\mathbb{Z}^n,t\in \mathbb{Z}\},\\
\mathcal{A}_r&=\{\tau\in \mathfrak{h}_n^\ast: \tau(Z)=0 \text{ and }\tau(\log \Gamma_r)\subset \mathbb{Z}\}.
\end{align*}
\end{notation}
By \cite[Lemma 3.5]{gw}, for any uniform discrete subgroup $\Gamma$ of $H_n$, there exist an automorphism $\psi\colon H_n\to H_n$ and $r\in\mathbb{Z}^n_{\dv}$ such that
\begin{equation*}
\psi(\Gamma)=\Gamma_r
\end{equation*}
holds.
Therefore, $\Gamma \backslash H_n$ is identified with $\Gamma_r \backslash H_n$, and so it is suffice to consider the case when $\Gamma=\Gamma_r$.

By the \cite[Remark 2.6]{gw}, for any metric $g$ on $\mathfrak{h}_n=\mathfrak{z}_n^\perp \oplus \mathfrak{z}_n$, there exists an inner automorphism $\varphi\colon H_n\to H_n$ such that $\varphi^\ast g$ is of the form
\begin{equation*}
\varphi^\ast g=
\begin{bmatrix}
h &0 \\
0 &g_{n+1}
\end{bmatrix}
\end{equation*}
with $h$ a metric on $\mathfrak{z}_n^\perp$ and $g_{2n+1}>0$.
By \cite[Proposition 2.2]{gw}, $(\Gamma_r \backslash H_n,g)$ is isometric to $(\Gamma_r \backslash H_n,\varphi^\ast g)$.
Thus, it is suffice to consider the case when $g$ is of the form
\begin{equation*}
g=
\begin{bmatrix}
h &0 \\
0 &g_{n+1}
\end{bmatrix}.
\end{equation*}

We next explain the fact about the irreducible unitary representations of $H_n$.
\begin{notation}
We consider the following irreducible unitary representations of $H_n$. 
\begin{itemize}
\item[(a)] For $\tau\in \mathfrak{h}_n^\ast$ with $\tau(Z)=0$, we put $f_{\tau}\colon H_n\to U(1)$ by $f_{\tau}(\exp X)=\exp(2\pi \sqrt{-1}\tau(X))$ for all $X\in \mathfrak{h}_n$.
Then, $f_{\tau}$ defines a $1$-dimensional unitary representation of $H_n$. 
\item[(b)] For $c\in \mathbb{R}\backslash \{0\}$, we define a representation $\pi_c$ on $L^2(\mathbb{R}^n)$ by
\begin{equation*}
(\pi_c(\gamma(x,y,t))f)(u)=\exp(2\pi\sqrt{-1}c(t+u\cdot y))f(x+u)
\end{equation*}
for all $f\in L^2(\mathbb{R}^n)$ and $\gamma(x,y,t)\in H_n$.
\end{itemize}
\end{notation}
By \cite[Lemma 3.7]{gw}, 
\begin{equation*}
\{f_{\tau}:\tau\in \mathfrak{h}_n^\ast \text{ and }\tau(Z)=0\}\cup\{\pi_c:c\in\mathbb{R}\backslash\{0\}\}
\end{equation*}
is a complete set of irreducible unitary representations of $H_n$.

Let $R$ be the quasi-regular representation of $H_n$ on $L^2(\Gamma_r \backslash H_n)$, i.e.,
\begin{equation*}
(R(\gamma')f)([\gamma])=f([\gamma\gamma'])
\end{equation*}
holds for all $f\in L^2(\Gamma_r \backslash H_n)$, $[\gamma]\in\Gamma_r \backslash H_n$ and $\gamma\in H_n$.
By \cite[Lemma 3.7]{gw}, $(R,L^2(\Gamma_r \backslash H_n))$ decomposes
\begin{align*}
&(R,L^2(\Gamma_r \backslash H_n))\\
\cong&\left(\bigoplus_{\tau\in\mathcal{A}_r}(f_{\tau},\mathbb{C})\right)\bigoplus\left(\bigoplus_{c\in\mathbb{Z}\backslash\{0\}}(|c^n|r_1\ldots r_n)(\pi_c,L^2(\mathbb{R}^n))\right).
\end{align*}

Finally, we remark on the Laplacian and Ricci curvature of a unimodular Lie group with a left invariant metric.
\begin{Lem}\label{linv}
Let $G$ be an $m$-dimensional unimodular Lie group with Lie algebra $\mathfrak{g}$, i.e., Trace(ad(X))=0 for all $X\in\mathfrak{g}$.
Take a left invariant metric $g$ on $G$.
Let $\{U_i\}$ be the $g$-orthonormal basis of $\mathfrak{g}$.
Consider $\{U_i\}$ as left invariant vector fields on $G$.
\begin{itemize}
\item[(i)]  We have $\Delta_g f=-\sum^{m}_{i=1} U_i^2 f$ for $f\in C^\infty(G)$,
\item[(ii)] We have
\begin{align*}\Ric(U_i,U_j)=&-\frac{1}{2}\sum_{k=1}^m g([U_k,U_i],[U_k,U_j])-\frac{1}{2}\sum_{k=1}^m g(U_k,[[U_k,U_i],U_j])\\
&+\frac{1}{4}\sum_{k,l=1}^m g(U_i,[U_k,U_l])g(U_j,[U_k,U_l]).
\end{align*}
\item[(iii)] We have 
\begin{equation*}
\nabla^\ast\Ric(\nabla f,\cdot)=-\sum_{i,j=1}^m\Ric(U_i,U_j)U_i U_j f
\end{equation*}
for $f\in C^\infty(G)$.
\end{itemize}
\end{Lem}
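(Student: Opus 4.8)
The plan is to derive all three assertions from a single elementary consequence of the Koszul formula, namely that $\sum_{i}\nabla_{U_i}U_i=0$ on a unimodular Lie group with a left invariant metric.

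First I would record the basic identity. For left invariant vector fields $X,Y,Z$ the functions $g(X,Y)$, $g(Y,Z)$, $g(X,Z)$ are constant, so the Koszul formula collapses to
\begin{equation*}
2g(\nabla_X Y,Z)=g([X,Y],Z)-g([X,Z],Y)-g([Y,Z],X).
\end{equation*}
Setting $X=Y=U_i$ (so that $[U_i,U_i]=0$) and summing over $i$ gives, for each $k$,
\begin{equation*}
2\sum_i g(\nabla_{U_i}U_i,U_k)=-2\sum_i g([U_i,U_k],U_i)=2\sum_i g(\mathrm{ad}(U_k)U_i,U_i)=2\,\mathrm{Trace}(\mathrm{ad}(U_k))=0,
\end{equation*}
using $[U_i,U_k]=-\mathrm{ad}(U_k)U_i$, orthonormality of $\{U_i\}$, and the unimodular hypothesis. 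Hence $\sum_i\nabla_{U_i}U_i=0$ as a vector field on $G$. Part (i) is then immediate: in the orthonormal frame $\{U_i\}$ one has $\Delta_g f=-\tr\nabla^2 f=-\sum_i\bigl(U_i(U_i f)-(\nabla_{U_i}U_i)f\bigr)$, and the correction term vanishes, giving $\Delta_g f=-\sum_i U_i^2 f$.

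For part (iii) I would use that the Ricci tensor of a left invariant metric is left invariant, so that $\Ric(U_i,U_j)$ is a constant for each pair $(i,j)$. Writing $\omega=\Ric(\nabla f,\cdot)$, and noting $\nabla f=\sum_j(U_j f)U_j$, so $\omega(U_i)=\sum_j(U_j f)\,\Ric(U_i,U_j)$, one computes
\begin{equation*}
\nabla^\ast\omega=-\sum_i(\nabla_{U_i}\omega)(U_i)=-\sum_i\Bigl(U_i\bigl(\omega(U_i)\bigr)-\omega(\nabla_{U_i}U_i)\Bigr)=-\sum_{i,j}\Ric(U_i,U_j)\,U_i U_j f,
\end{equation*}
where $\sum_i\omega(\nabla_{U_i}U_i)=\omega\bigl(\sum_i\nabla_{U_i}U_i\bigr)=0$ by Step~1, the constancy of $\Ric(U_i,U_j)$ lets the derivative $U_i$ fall only on $f$, and the symmetry of $\Ric$ lets us relabel. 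This is the claimed formula.

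Part (ii) is the only genuinely computational step. Starting again from the reduced Koszul formula, I would write $\nabla_{U_i}U_j$ explicitly in terms of the structure constants $g([U_a,U_b],U_c)$, compute the curvature $R(U_i,U_k)U_k$, and contract to obtain $\Ric(U_i,U_j)=\sum_k g(R(U_i,U_k)U_k,U_j)$ (with sign conventions fixed so that the round sphere has positive Ricci). Collecting the resulting terms and once more invoking $\sum_i\nabla_{U_i}U_i=0$---which is precisely what removes the terms that would otherwise carry a factor $\mathrm{Trace}(\mathrm{ad})$---one arrives at the stated three-sum expression; this is the classical computation of Milnor. I expect the bookkeeping of structure constants, and checking that the unimodular hypothesis eliminates exactly the superfluous terms, to be the main obstacle, while parts (i) and (iii) are short once the identity $\sum_i\nabla_{U_i}U_i=0$ is in hand.
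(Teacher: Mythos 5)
Your proposal is correct and takes essentially the route the paper intends: the paper offers no details beyond ``straight calculation'' (citing Urakawa for (i)), and your observation that unimodularity gives $\sum_i\nabla_{U_i}U_i=0$ via the reduced Koszul formula, which immediately yields (i) and (iii) and reduces (ii) to Milnor's classical structure-constant computation, is exactly that calculation made explicit. The one step you leave as a sketch, the bookkeeping for (ii), is likewise left implicit in the paper, and your identification of $\sum_i\nabla_{U_i}U_i=0$ (equivalently the vanishing of the trace-of-$\mathrm{ad}$ vector) as what removes the extra terms in the general Ricci formula is the correct mechanism; indeed your formula reproduces the paper's subsequent Heisenberg and bi-invariant computations.
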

Straight calculation implies the lemma.
See \cite[Corollary 1]{Ur} for the proof of (i).
Note that the Heisenberg groups is unimodular.

Now, we are in position to prove Proposition \ref{heis}.
\begin{proof}[Proof of Proposition \ref{heis}]
We can assume that $\Gamma=\Gamma_r$ and that $g$ is of the form
\begin{equation*}
g=
\begin{bmatrix}
h &0 \\
0 &g_{n+1}
\end{bmatrix}.
\end{equation*}

By \cite[Lemma 3.5]{gw}, there exist an $h$-orthonormal basis $\{X'_1,\cdots,X'_n,Y'_1,\cdots,Y'_n\}$ of $\mathfrak{z}_n^\perp$ and positive constants $d_1,\cdots,d_n>0$ such that
\begin{equation*}
[X'_i,X'_j]=[Y'_i,Y'_j]=0,\quad [X'_i,Y'_j]=\delta_{ij} d_i^2 Z
\end{equation*}
for all $1\leq i,j\leq n$.
We put $Z'=(g_{2n+1})^{-1/2}Z$.
Then, $\{X'_1,\cdots,X'_n,Y'_1,\cdots,Y'_n,Z'\}$ is a $g$-orthonormal basis of $\mathfrak{h}_n$, and $[X'_i,Y'_i]=d_i^2 \sqrt{g_{2n+1}} Z'$ holds.
By Lemma \ref{linv} (ii), we have
\begin{align*}
\Ric(X'_i,X'_j)&=\Ric(Y'_i,Y'_j)=-\frac{1}{2}\delta_{ij} d_i^4 g_{2n+1},\\
\Ric(X'_i,Y'_j)&=\Ric(X'_i,Z')=\Ric(Y'_i,Z')=0,\\
\Ric(Z',Z')&=\frac{1}{2}\sum_{i=1}^n d_i^4 g_{2n+1},
\end{align*}
for all $1\leq i,j \leq n$.
Therefore, by Lemma \ref{linv} (iii), we have
\begin{equation*}
\nabla^\ast(\Ric(\nabla f,\cdot))=\frac{1}{2}\sum_{i=1}^n d_i^4 g_{2n+1} ((X'_i)^2+(Y'_i)^2) f -\frac{1}{2}\sum_{i=1}^n d_i^4 g_{2n+1} (Z')^2 f
\end{equation*}
for all $f\in C^\infty(M)$.

Let $f\in L^2(\Gamma_r \backslash H_n)$ belongs to $(f_{\tau},\mathbb{C})$-component for some $\tau\in\mathcal{A}_r$.
Take real numbers $a_i,b_i\in \mathbb{R}$ such that $\tau = \sum (a_i (X'_i)^\ast+b_i (Y'_i)^\ast)$, where $\{(X'_1)^\ast, \ldots, (X'_n)^\ast,$
$(Y'_1)^\ast, \ldots, (Y'_n)^\ast,(Z')^\ast\}$ denotes the dual basis of  $\{X'_1,\cdots,X'_n,Y'_1,\cdots,Y'_n,Z'\}$.
Then, we have
\begin{align*}
\Delta f &= 4\pi^2\sum_{i=1}^n(a_i^2+b_i^2)f,\\
\nabla^\ast(\Ric(\nabla f,\cdot))&=-\sum_{i=1}^n 2\pi^2 d_i^4 g_{2n+1}(a_i^2+b_i^2)f
\end{align*}
for all $f\in C^\infty(M)$.
Therefore, we have 
\begin{equation}\label{tau}
-\frac{\sum_{i=1}^n 2\pi^2 d_i^4 g_{2n+1}(a_i^2+b_i^2)}{(4\pi^2\sum_{i=1}^n(a_i^2+b_i^2))^2} \Delta^2 f =\nabla^\ast(\Ric(\nabla f,\cdot)).
\end{equation}

We next consider the functions that belongs to $(\pi_c,L^2(\mathbb{R}^n))$-component for some $c\in \mathbb{Z}\backslash\{0\}$.
We consider the functions in $L^2(\mathbb{R}^n)$ as the functions in $L^2(\Gamma_r \backslash H_n)$.
We define $\psi_*\colon \mathfrak{h}_n \to \mathfrak{h}_n$ by $\psi_*(X'_i)=d_i X_i,\psi_*(Y'_i)=d_i Y_i$ and $\psi_*(Z)=Z$.
Then, $\psi_*$ is automorphism of $\mathfrak{h}_n$ and lifts to the automorphism $\psi\colon H_n\to H_n$.
We put $\pi'_c=\pi_c\circ \psi$.
Then, $\pi'_c$ is irreducible representation of $H_n$ on $L^2(\mathbb{R}^n)$, and $\pi'_c(Z)=\pi_c(Z)$.
Thus, $\pi'_c$ is unitary equivalent to $\pi_c$, and so there exists an isomorphism $\Psi \colon L^2(R^n)\to L^2(R^n)$ such that $\Psi(\pi_c(\gamma)f)=\pi'_c(\gamma)\Psi(f)$ for all $f\in L^2(R^n)$ and $\gamma\in H_n$.
We have $(\pi'_c)_*(X'_i)=d_i (\pi_c)_*(X_i)$,$(\pi'_c)_*(Y'_i)=d_i (\pi_c)_*(Y_i)$ and $(\pi'_c)_*(Z')=(g_{2n+1})^{-1/2}(\pi_c)_*(Z)$.
If  $\Psi (f)\in \mathcal{S}(R^n)$ (where $\mathcal{S}(\mathbb{R}^n)$ denotes the Schwartz space), we have
\begin{align*}
&\Psi (\Delta f)(u)\\
=&\Psi \left(-\sum_{i=1}^n\left(((\pi_c)_\ast(X'_i))^2+((\pi_c)_\ast(Y'_i))^2\right)f-((\pi_c)_\ast(Z'))^2f\right)(u)\\
=&\left[-\sum_{i=1}^n(((\pi'_c)_\ast(X'_i))^2+((\pi'_c)_\ast(Y'_i))^2)-((\pi'_c)_\ast(Z'))^2\right]\Psi(f)(u)\\
=&\left[-\sum_{i=1}^n d_i^2(((\pi_c)_\ast(X_i))^2+((\pi_c)_\ast(Y_i))^2)-\frac{1}{g_{2n+1}}((\pi_c)_\ast(Z))^2\right]\Psi(f)(u)\\
=&\left[\sum_{i=1}^n d_i^2(4\pi^2 c^2 u_i^2- \frac{{\partial}^2}{\partial u_i^2})+\frac{4\pi^2 c^2}{g_{2n+1}}\right]\Psi(f)(u),\\
&\Psi(\nabla^\ast (\Ric(\nabla f,\cdot)))\\
=&\left[\frac{1}{2}\sum_{i=1}^n d_i^6 g_{2n+1} (((\pi_c)_\ast(X_i))^2+((\pi_c)_\ast(Y_i))^2)-\frac{1}{2}\sum_{i=1}^n d_i^4 ((\pi_c)_\ast(Z))^2\right] \Psi(f)\\
=&\left[-\frac{1}{2}\sum_{i=1}^n d_i^6 g_{2n+1} (4\pi^2 c^2 u_i^2- \frac{{\partial}^2}{\partial u_i^2})+2\sum_{i=1}^n d_i^4 \pi^2 c^2\right] \Psi(f).
\end{align*} 
For $k=(k_1,\ldots,k_n)\in\mathbb{Z}_{\geq0}^n$, we put
\begin{equation*}
h_k(u)=\exp(|u|^2/2) \partial^k \exp(-|u|^2)\quad (u\in \mathbb{R}^n),
\end{equation*}
where
\begin{equation*}
\partial^k =\frac{\partial^{k_1+\cdots+k_n}}{\partial u_1^{k_1}\cdots\partial u_n^{k_n}}.
\end{equation*}
These functions are known as Hermite functions, and form a complete orthonormal system of $L^2(\mathbb{R}^n)$.
We have
\begin{equation*}
\left(u_i^2-\frac{\partial^2}{\partial u_i^2}\right)h_k=(2k_i+1)h_k
\end{equation*}
for $1\leq i\leq n$.
Thus, for $\widetilde{h}_k(u)= h_k(\sqrt{2\pi |c|}u)$, we have
\begin{equation*}
\left(4\pi^2 c^2 u_i-\frac{{\partial}^2}{\partial u_i^2}\right) \widetilde{h}_k=2\pi|c|(2k_i+1)\widetilde{h}_k.
\end{equation*}
Therefore, for $f_k=\Psi^{-1} (\widetilde{h}_k)$, we have
\begin{align*}
&\Psi (\Delta f_k)(u)\\
=&\left[\frac{4\pi^2 c^2}{g_{2n+1}}+\sum_{i=1}^n 2\pi|c|d_i^2 (2k_i+1)\right]\Psi(f_k)(u),\\
&\Psi(\nabla^\ast (\Ric(\nabla f_k,\cdot)))\\
=&\left[2\pi^2 c^2\sum_{i=1}^n d_i^4 -\sum_{i=1}^n \pi|c| d_i^6 g_{2n+1}(2k_i+1)\right] \Psi(f_k),
\end{align*}
and so
\begin{equation}\label{see}
\frac{2\pi^2 c^2\sum_{i=1}^n d_i^4 -\sum_{i=1}^n \pi|c| d_i^6 g_{2n+1}(2k_i+1)}{\left[\frac{4\pi^2 c^2}{g_{2n+1}}+\sum_{i=1}^n 2\pi|c|d_i^2 (2k_i+1)\right]^2} \Delta^2 f_k=\nabla^\ast (\Ric(\nabla f_k,\cdot)).
\end{equation}
For $c\in \mathbb{Z}\backslash\{0\},k\in\mathbb{Z}^n_{\geq 0}$, we put
\begin{equation*}
\Omega(c,k,g)=\frac{2\pi^2 c^2\sum_{i=1}^n d_i^4 -\sum_{i=1}^n \pi|c| d_i^6 g_{2n+1}(2k_i+1)}{\left[\frac{4\pi^2 c^2}{g_{2n+1}}+\sum_{i=1}^n 2\pi|c|d_i^2 (2k_i+1)\right]^2}.
\end{equation*}

By (\ref{tau}) and (\ref{see}), we have
\begin{equation*}
\Omega_1(g)=\sup\{\Omega(c,k,g):c\in \mathbb{Z}_{>0},k\in\mathbb{Z}^n_{\geq 0}\}.
\end{equation*}
If $\Omega(c,k,g)>0$, then we have $\Omega(c,0,g)\geq\Omega(c,k,g)$.
Thus, we have $\Omega_1(g)=\sup\{\Omega(c,0,g):c\in \mathbb{Z}_{>0}\}.$
We have, for $c\in\mathbb{Z}_{>0}$,
\begin{equation*}
\Omega(c,0,g)=\frac{g_{2n+1}^2}{8\pi^2}\frac{c\sum_{i=1}^n d_i^4 -\frac{g_{2n+1}}{2\pi} \sum_{i=1}^n d_i^6}{c(c+\frac{g_{2n+1}}{2\pi} \sum_{i=1}^n d_i^2)^2}.
\end{equation*}
Put $p=\sum_{i=1}^n d_i^2,q=\sum_{i=1}^n d_i^4$ and $r=\sum_{i=1}^n d_i^6$.
For $x\in \mathbb{R}_{>0}$, 
\begin{equation*}
F(x)=\frac{g_{2n+1}^2}{8\pi^2}\frac{q x-\frac{g_{2n+1}}{2\pi}r}{x(x+\frac{g_{2n+1}}{2\pi}p)^2}
\end{equation*}
takes its maximum at $x=\frac{g_{2n+1}}{2\pi}\frac{r}{q}\left(\sqrt{\frac{9}{16}+\frac{pq}{2r}}+\frac{3}{4}\right)$.
Thus, we have
\begin{align*}
&\Omega(c,0,g)\\
\leq &F\left(\frac{g_{2n+1}}{2\pi}\frac{r}{q}\left(\sqrt{\frac{9}{16}+\frac{pq}{2r}}+\frac{3}{4}\right)\right)\\
=&\frac{1}{2}\frac{q^3}{r^2}\frac{\sqrt{\frac{9}{16}+\frac{pq}{2r}}-\frac{1}{4}}
{(\sqrt{\frac{9}{16}+\frac{pq}{2r}}+\frac{3}{4})(\frac{pq}{r}+\sqrt{\frac{9}{16}+\frac{pq}{2r}}+\frac{3}{4})^2}
\end{align*}
By the Cauchy-Schwartz inequality, we have
\begin{equation*}
q^2=(\sum_{i=1}^n d_i^4)^2\leq (\sum_{i=1}^n d_i^2)(\sum_{i=1}^n d_i^6)=pr,
\end{equation*}
and so
\begin{equation*}
\frac{q^3}{r^2}\leq\frac{pq}{r}.
\end{equation*}
Put $X=\sqrt{\frac{9}{16}+\frac{pq}{2r}}$.
Since $1\leq \frac{pq}{r}\leq n$, we have $\sqrt{\frac{17}{16}}\leq X \leq \sqrt{\frac{9}{16}+\frac{n}{2}}$.
We have $\frac{pq}{r}=2X^2 - \frac{9}{8}$.
Therefore, we get
\begin{equation}\label{sime}
\begin{split}
\Omega(c,0,g)&\leq \frac{(X^2 - \frac{9}{16})(X-\frac{1}{4})}
{(X+\frac{3}{4})(2X^2+X-\frac{3}{8})^2}\\
&=\frac{1}{4} \frac{X-\frac{3}{4}}{(X-\frac{1}{4})(X+\frac{3}{4})^2}.
\end{split}
\end{equation}

If $n\geq2$, the right side of (\ref{sime}) takes its maximum at $X=\frac{5}{4}$,
and so $\Omega(c,0,g)\leq \frac{1}{32}$.
If we take a sequence of left invariant metrics $\{g(l)\}$ such that $d_1(l)=d_2(l)=\text{constant}$, $\lim_{l\to \infty}d_i(l)=0\text{ ($k\geq3$)}$ and
\begin{equation*}
c(l)=\frac{g_{2n+1}(l)}{2\pi}\left(\frac{3r(l)}{4q(l)}+\sqrt{\frac{9}{16}\frac{r(l)^2}{q(l)^2}+\frac{p(l)r(l)}{2q(l)}}\right) \in \mathbb{Z}_{>0},
\end{equation*}
then $\lim_{l\to\infty}\Omega(c(l),0,g(l))=\frac{1}{32}$.
Thus, we get 
\begin{equation*}
\sup_{g\in \Le(M)}\Omega_1(g)=\frac{1}{32}.
\end{equation*}

If $n=1$, we have $X=\sqrt{\frac{17}{16}}$,
and so 
\begin{equation*}\Omega(c,0,g)\leq \frac{\sqrt{\frac{17}{16}}-\frac{3}{4}}{(\sqrt{\frac{17}{16}}-\frac{1}{4})(\sqrt{\frac{17}{16}}+\frac{3}{4})^2}=\frac{1}{16}\frac{\sqrt{17}-3}{(\sqrt{17}-1)(\sqrt{17}+3)^2}.
\end{equation*}
If we take a left invariant metric such that
\begin{equation*}
c=\frac{g_{2n+1}}{2\pi}\left(\frac{3r}{4q}+\sqrt{\frac{9}{16}\frac{r^2}{q^2}+\frac{pr}{2q}}\right)\in \mathbb{Z}_{>0},
\end{equation*}
then we have $\Omega(c,0,g)=\frac{1}{16}\frac{\sqrt{17}-3}{(\sqrt{17}-1)(\sqrt{17}+3)^2}$.
Thus, we get
\begin{equation*}\sup_{g\in \Le(M)}\Omega_1(g)=
\frac{1}{16}\frac{\sqrt{17}-3}{(\sqrt{17}-1)(\sqrt{17}+3)^2}.
\end{equation*}
\end{proof}
\appendix
\section{The proof of Lemma \ref{subsp}}
In this appendix we prove Lemma \ref{subsp}.
\begin{proof}[Proof of Lemma \ref{subsp}]
Suppose that $S\leq0$ does not hold.
Then, there exist $x_0 \in M$ and $X_0 \in T_{x_0} M$ such that $S(X_0,X_0)>0$.
We put $2 \delta = S(X_0,X_0)>0$.
There exist constants $C_1,C_2>0$ and a local coordinate $(U;x^1,x^2,\ldots,x^n)$ centered at $x_0$ such that
\begin{align*}
(\nabla x^1)_{x_0}&=X_0,\\
S(\nabla x^1,\nabla x^1) &\geq \delta,\\
C_1 d\mu_{\mathbb{R}^n} \leq d\mu_g &\leq C_2 d\mu_{\mathbb{R}^n} \quad\text{on } \, U.
\end{align*}
We take a positive integer $N\in \mathbb{N}$ such that $[-\frac{2\pi}{N},\frac{2\pi}{N}]^n \subset U$ and take a smooth function $\psi \colon U \to \mathbb{R}$ such that
\begin{align*}
\supp \psi &\subset \left(-\frac{2\pi}{N},\frac{2\pi}{N}\right)^n,\\
\psi &=1 \quad \text{on }  \,\left[-\frac{\pi}{N},\frac{\pi}{N}\right]^n,\\
0\leq \psi &\leq 1 \quad\text{on }  \,U.
\end{align*}
Since $\psi$ has compact support, we can take a constant $C_3>0$ such that
\begin{align*}
|S(\nabla \psi ,\nabla \psi)|&\leq C_3,\\
|S(\nabla \psi ,\nabla x^1)|&\leq C_3 \quad\text{on }\, U.
\end{align*}
For each positive integer $m\in \mathbb{N}$, we define a function $u_m \colon [-\frac{2\pi}{N},\frac{2\pi}{N}]^n \to \mathbb{R}$ by
\begin{equation*}
u_m(x)=\sin (mNx^1).
\end{equation*}
We regard $\psi u_m$ as a smooth function on $M$: $\psi u_m \in C^\infty(M)$.
For each $k,K\in \mathbb{N}$ we define a $k$-dimensional subspace $V_{k,K}\subset  C^\infty(M)$ by
\begin{equation*}
V_{k,K}=\Span_\mathbb{R} \{ \psi u_{K+1},\cdots ,\psi u_{K+k} \}.
\end{equation*}
Take $a=(a^{K+1},\cdots,a^{k+K}) \in \mathbb{R}^K$ and define $|a|^2=(a^{K+1})^2+\cdots+(a^{K+k})^2$.
In U, we have

\begin{equation}\label{pt}
\begin{split}
&S\left(\nabla\left(\sum_{i=K+1}^{K+k} \psi a^i u_i\right),\nabla\left(\sum_{j=K+1}^{K+k}\psi a^j u_j\right)\right)\\
=&\sum_{i,j=K+1}^{K+k} a^i a^j \Bigl\{\sin(iNx^1) \sin(jNx^1) S(\nabla \psi,\nabla \psi) \\
&\qquad+\left(jN\psi \sin(iNx^1) \cos(jNx^1)+iN\psi \sin(jNx^1) \cos(iNx^1)\right)S(\nabla \psi,\nabla x^1) \\
&\qquad + ijN^2\psi^2 \cos(iNx^1)\cos(jNx^1) S(\nabla x^1,\nabla x^1)\Bigr\}
\end{split}
\end{equation}
Since $|\sin| \leq 1$ and $|\cos| \leq 1$,
\begin{equation}\label{pt2}
\begin{split}
&\text{(the left hand side of (\ref{pt}))}\\
\geq& -\sum_{i,j=K+1}^{K+k} \left(|a^i||a^j|C_3(1+iN+jN)\right)\\
&\qquad +\left(\sum_{i=K+1}^{K+k} iNa^i\psi \cos(iNx^1)\right)^2 S(\nabla x^1,\nabla x^1)\\
\geq& -3k(K+k)N|a|^2 C_3\\ 
&\qquad+ \Bigl(\sum_{i=K+1}^{K+k} iNa^i\psi \cos(iNx^1)\Bigr)^2 S(\nabla x^1,\nabla x^1)\\
\geq& -3k(K+k)N|a|^2 C_3 + \Bigl(\sum_{i=K+1}^{K+k} iNa^i\psi \cos(iNx^1)\Bigr)^2 \delta.
\end{split}
\end{equation}
By integrate both sides of (\ref{pt2}), we have
\begin{equation}\label{int}
\begin{split}
&\int_M S\left(\nabla\left(\sum_{i=K+1}^{K+k} \psi a^i u_i\right),\nabla\left(\sum_{j=K+1}^{K+k}\psi a^j u_j\right)\right)\,d\mu_g \\
\geq& \int_{[-\frac{2\pi}{N},\frac{2\pi}{N}]^n} \Bigl(-3k(K+k)N|a|^2 C_3 
+ \Bigl(\sum_{i=K+1}^{K+k} iNa^i\psi \cos(iNx^1)\Bigr)^2 \delta \Bigr)\,d\mu_g\\
\geq& -\int_{[-\frac{2\pi}{N},\frac{2\pi}{N}]^n} 3k(K+k)N|a|^2 C_2 C_3 \,d\mu_{\mathbb{R}}\\
&\quad+\int_{[-\frac{\pi}{N},\frac{\pi}{N}]^n} \Bigl(\sum_{i=K+1}^{K+k} iNa^i \cos(iNx^1)\Bigr)^2 \delta C_1 \,d\mu_{\mathbb{R}}\\
=& -3k(K+k)N|a|^2 C_2 C_3 \Bigl(\frac{4\pi}{N}\Bigr)^n\\ 
&\quad+ \sum_{i,j=K+1}^{K+k} ijN^2 a^i a^j \delta C_1 \Bigl(\frac{2\pi}{N}\Bigr)^{n-1} \int_{-\frac{\pi}{N}}^{\frac{\pi}{N}} \cos(iNx^1)\cos(jNx^1)\,dx^1\\
=& -3k(K+k)N|a|^2 C_2 C_3 \Bigl(\frac{4\pi}{N}\Bigr)^n 
+ \sum_{i,j=K+1}^{K+k} ijN^2 a^i a^j \delta C_1 \Bigl(\frac{2\pi}{N}\Bigr)^{n-1} \frac{\pi}{N}\delta_{ij}\\
\geq&|a|^2\Bigl( -3k(K+k)N C_2 C_3 \Bigl(\frac{4\pi}{N}\Bigr)^n 
+K^2 N^2 \delta C_1 \Bigl(\frac{2\pi}{N}\Bigr)^{n-1} \frac{\pi}{N}\Bigr).
\end{split}
\end{equation}

For any $k\in \mathbb{N}$ there exists a positive integer $K\in{N}$ such that 
\begin{equation*}
-3k(K+k)N C_2 C_3 \Bigl(\frac{4\pi}{N}\Bigr)^n 
+K^2 N^2 \delta C_1 \Bigl(\frac{2\pi}{N}\Bigr)^{n-1} \frac{\pi}{N}>0,
\end{equation*}
and so we have $\int_M S(\nabla v,\nabla v)\,d\mu_g>0$ for any $v \in V_{k,K} \backslash \{0\}$ by (\ref{int}).

We define $V_k$ as the image of the linear map $V_{k,K}\to C^\infty(M) \cap H, v\mapsto v-\frac{1}{\Vol(M)}\int_M v \,d\mu_g$.
Since the Kernel is $\{0\}$, $V_k$ is $k$-dimensional.
Moreover, for any $v \in V_k \backslash \{0\}$, we have $\int_M S(\nabla v,\nabla v)\,d\mu_g>0$; therefore $\Lambda_S(v)>0$.
\end{proof}

\bibliographystyle{amsbook}

\end{document}